\documentclass[12pt,oneside]{amsart}

\usepackage{amssymb} 
\usepackage{enumerate, amsfonts, latexsym,epsfig, color, hyperref}
\usepackage{epstopdf}

\input xy
\xyoption{all}
\newcommand{\cd}[1]{\begin{equation*}{\xymatrix{#1}}\end{equation*}}
\newcommand{\cdlabel}[2]{\begin{equation}\label{#1}{\xymatrix{#2}}\end{equation}}

\newtheorem {theorem}{Theorem}[section]
\newtheorem {lemma} [theorem] {Lemma}
\newtheorem {proposition} [theorem] {Proposition}

\newtheorem {corollary} [theorem] {Corollary}

\theoremstyle{definition}
\newtheorem {definition} [theorem] {Definition}
\newtheorem {remark} [theorem] {Remark}
\newtheorem {example} [theorem] {Example}

\def\N {\mathbb N}

\def\ker {\mathrm{ker}}

\def\mc {\mathcal}

\def\Stab {\mathrm{Stab}}

\def\into {\, {\hookrightarrow}\, }

\newcommand{\bZ}{\mathbb{Z}}

\newcommand{\co}{\colon\thinspace}
\newcommand{\llangle}{\langle\negthinspace\langle}
\newcommand{\rrangle}{\rangle\negthinspace\rangle}

\usepackage{amsmath}

\newcommand{\leftQ}[2]{\left.\raisebox{-.2em}{$#2$}\middle\backslash\raisebox{.2em}{$#1$}\right.}

\makeatletter
\newtheorem*{rep@theorem}{\rep@title}
\newcommand{\newreptheorem}[2]{%
\newenvironment{rep#1}[1]{%
 \def\rep@title{#2 \ref{##1}}%
 \begin{rep@theorem}\it}%
 {\end{rep@theorem}}}
\makeatother

\makeatletter
\newtheorem*{rep@corollary}{\rep@title}
\newcommand{\newrepcorollary}[2]{%
\newenvironment{rep#1}[1]{%
 \def\rep@title{#2 \ref{##1}}%
 \begin{rep@corollary}\it}%
 {\end{rep@corollary}}}
\makeatother

\newreptheorem{theorem}{Theorem}
\newreptheorem{lemma}{Lemma}
\newreptheorem{corollary}{Corollary}

\newtheorem{thmA}{Theorem}

\newtheorem{corA}[thmA]{Corollary}

\begin{document}

\title[Quasiconvexity and Dehn filling]{Quasiconvexity and Dehn filling}

\author[D. Groves]{Daniel Groves}
\address{Department of Mathematics, Statistics, and Computer Science,
University of Illinois at Chicago,
322 Science and Engineering Offices (M/C 249),
851 S. Morgan St.,
Chicago, IL 60607-7045}
\email{groves@math.uic.edu}

\author[J.F. Manning]{Jason Fox Manning}
\address{Department of Mathematics, 310 Malott Hall, Cornell University, Ithaca, NY 14853}
\email{jfmanning@math.cornell.edu}

\thanks{Both authors thank the National Science Foundation (under grants DMS-1507067 and DMS-1462263) and the Simons Foundation (\#342049 to Daniel Groves and \#524176 to Jason Manning) for support.  Thanks also to the Mathematical Sciences Research Institute, where the second author was in residence during the conception of this work, and to the American Institute of Mathematics, where the paper was finished.
}

\begin{abstract}
We define a new condition on relatively hyperbolic Dehn filling which allows us to control the behavior of a relatively quasiconvex subgroups which need not be full.  As an application, in combination with recent work of Cooper and Futer \cite{CooperFuter}, we provide a new proof of the virtual fibering of non-compact finite-volume hyperbolic $3$--manifolds, a result first proved by Wise \cite{Wise}.  Additionally, we explain how the results of \cite[Appendix A]{VH} can be generalized to the relative setting to control the relative height of relatively quasiconvex subgroups under appropriate Dehn fillings.
\end{abstract}

\maketitle

\section{Introduction}  

Dehn filling results for hyperbolic and relatively hyperbolic groups have been used to great effect in recent years, notably in solving the isomorphism problem for a broad class of relatively hyperbolic groups \cite{dahmaniguirardel15}, and as part of Agol's proof of the Virtual Haken Conjecture \cite{VH} (see particularly the Appendix to \cite{VH} and \cite{AGM_msqt}).  In many of these results a key ingredient is the control of relatively quasiconvex subgroups under Dehn filling, building on techniques developed in \cite{agm}.

In previous work, this control was limited by the requirement that the fillings be `$H$--fillings', for a relatively quasiconvex subgroup $H$.  This requirement is mild when $H$ is \emph{full} (in the sense that each infinite  intersection of $H$ with a maximal parabolic subgroup is finite index in that parabolic), but more restrictive for general relatively quasiconvex subgroups.  One way to avoid this issue is to apply combination theorems such as those in \cite{MP09,MM-P}, etc. to enlarge relatively quasiconvex subgroups to full ones.  Even in case this is possible, the methods of this paper are conceptually simpler as they avoid this intermediate enlargement step.

In this paper, we propose a new condition on relatively hyperbolic Dehn filling, which we call {\em $H$--wide}, which is applicable to relatively quasiconvex subgroups $H$ in much greater generality than previous techniques.  We prove that under sufficiently long and $H$--wide fillings, the same control can be had over the behavior of a relatively quasiconvex subgroup $H$ under filling as could be obtained with sufficiently long $H$--fillings in the previous works.

We provide two main applications.  First, we combine our work with recent work of Cooper and Futer \cite{CooperFuter} to give a new proof of the following theorem of Wise.  

\begin{thmA}\label{t:VS} \cite[Theorem 14.29]{Wise}
 Suppose that $G$ is the fundamental group of a non-compact finite-volume hyperbolic $3$--manifold.  Then $G$ is virtually compact special.
\end{thmA}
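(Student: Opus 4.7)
The plan is to combine the Cooper--Futer cubulation of cusped hyperbolic $3$-manifold groups with the $H$-wide Dehn filling technology of this paper to reduce to the closed hyperbolic setting, where Agol's theorem on virtual specialness of cocompact cubulated hyperbolic groups applies. The conceptual novelty is that the hyperplane stabilizers arising from the Cooper--Futer construction are relatively quasiconvex but in general \emph{not} full, so earlier Dehn filling machinery cannot directly be applied; the $H$-wide condition introduced here is precisely what removes this obstruction, avoiding the combination-theorem detour described in the introduction.

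\textbf{Setup and cubulation.} The group $G = \pi_1(M)$ is hyperbolic relative to the cusp subgroups $\cP = \{P_1,\dots,P_k\}$, each virtually $\bZ^2$. The work of Cooper--Futer furnishes a finite family of immersed closed quasi-Fuchsian surfaces in $M$ whose fundamental groups, via Sageev's construction, produce a proper, relatively cocompact action of $G$ on a CAT(0) cube complex $X$. Let $\{H_1,\dots,H_n\}$ represent the finitely many conjugacy classes of hyperplane stabilizers; each $H_i$ is relatively quasiconvex in $G$ but typically not full.

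\textbf{Filling and the closed case.} I would choose a Dehn filling $G \onto \bar G$ along finite-index normal subgroups $N_i \dotnorm P_i$ that is sufficiently long and simultaneously $H_i$-wide for every $i$. By the main technical results of the present paper, $\bar G$ is hyperbolic, each $\bar H_i$ is quasiconvex in $\bar G$, and the filling descends to a proper cocompact action of $\bar G$ on a quotient CAT(0) cube complex whose hyperplane stabilizers are the $\bar H_i$. Agol's theorem then gives that $\bar G$ is virtually compact special. To lift specialness back to $G$, I would let the filling vary: for any $g \notin H_i$ the $H$-wide Dehn filling control produces a filling in which $\bar H_i$ is separable (via virtual specialness of $\bar G$) and the image of $g$ avoids $\bar H_i$, yielding separability of $H_i$ in a finite-index subgroup of $G$. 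Combined with the virtual compact specialness of each peripheral $P_i$ (immediate since $P_i$ is virtually $\bZ^2$), the Haglund--Wise separability criterion then yields that $G$ is virtually compact special.

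\textbf{Main obstacle.} The chief difficulty is the filling step: one must arrange $N_1,\dots,N_k$ so that the resulting filling is both long enough for the filling theorems to apply and $H_i$-wide for \emph{every} hyperplane stabilizer, and then verify that the induced action of $\bar G$ on the quotient cube complex is genuinely cocompact with the expected hyperplane stabilizers and hyperplane combinatorics. A secondary concern is the lifting step, which requires that the $H_i$-wide condition is sufficiently robust under variation of the filling kernels for a whole family of fillings to be available simultaneously; without this robustness one cannot separate arbitrary $g \notin H_i$ and hence cannot deduce separability of $H_i$ in $G$.
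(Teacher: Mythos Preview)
Your overall strategy---cubulate via Cooper--Futer, Dehn fill to the closed case, and pull separability back---is the right shape, but the proposal misses the decisive technical point and introduces an unjustified step.

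\textbf{The missing ingredient: double cosets.} The criterion you invoke (Przytycki--Wise \cite[Criterion 2.3]{PW}, Haglund--Wise \cite[\S4]{HaglundWise-Coxeter}) requires separability not only of the hyperplane stabilizers $Q_i$ but of the \emph{double cosets} $Q_iQ_j$. Separability of the individual $Q_i$ is already immediate from LERF for Kleinian groups \cite[Corollary 9.4]{VH}, so your single-coset argument via Proposition~\ref{p:weak sep} is redundant. The entire content of the paper's proof lies in showing that for $h\notin Q_iQ_j$ there is a filling, long and wide for \emph{both} $Q_i$ and $Q_j$, in which no kernel element lies in $Q_iQ_jh^{-1}$. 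This is Proposition~\ref{p:DC WS}, and its proof is substantially more delicate than Proposition~\ref{p:weak sep}: one must analyze how a geodesic quadrilateral with two sides tracking $\check\iota_\Psi(X_\Psi)$ and $\psi\cdot\check\iota_\Theta(X_\Theta)$ penetrates a deep horoball, and the argument uses the \emph{abelianness} of the peripheral subgroups in an essential way (to commute elements of the two parabolic pieces past each other). Your proposal contains no analogue of this, and the sentence about ``virtual compact specialness of each peripheral $P_i$'' does not substitute for it.

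\textbf{The quotient cube complex is neither justified nor needed.} You assert that the filling ``descends to a proper cocompact action of $\bar G$ on a quotient CAT(0) cube complex.'' No such construction appears in the paper, and verifying that the quotient is CAT(0) with the expected hyperplane combinatorics would itself be a theorem (you correctly flag this as your main obstacle, but do not resolve it). The paper avoids this entirely: once Proposition~\ref{p:DC WS} gives control of the double coset under filling, one chooses the filling kernels to be cyclic and invokes the Orbifold Hyperbolic Dehn Surgery Theorem \cite{DunbarMeyerhoff} to realize $\bar G$ as the fundamental group of a closed hyperbolic orbifold. Then $\bar G$ is Kleinian (hence LERF by \cite[Corollary 9.4]{VH}) and word-hyperbolic, so Minasyan \cite{minasyan:subsetgferf} gives separability of $\bar Q_i\bar Q_j$ directly. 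No cube complex for $\bar G$ is ever built, and Agol's virtual specialness theorem for closed hyperbolic groups is not invoked.

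A minor correction: the Cooper--Futer action is genuinely free and \emph{cocompact} (Theorem~\ref{t:CF}), not merely relatively cocompact.
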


The study of (virtually) special cube complexes and groups was initiated by Haglund and Wise in \cite{HW08}; we refer to that paper for the definition and basic properties.
The following is an immediate consequence of Theorem \ref{t:VS} and Agol's criterion for fibering \cite[Theorem 1.1]{agol:rfrs}.

\begin{corA} 
Suppose that $M$ is a non-compact finite-volume hyperbolic $3$--manifold.  Then $M$ has a finite-sheeted cover which fibers over the circle.
\end{corA}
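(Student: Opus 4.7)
The plan is to combine Theorem A with standard consequences of virtual specialness to verify the hypotheses of Agol's RFRS fibering criterion \cite[Theorem 1.1]{agol:rfrs}.

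First I would deduce from Theorem A that $\pi_1(M)$ is virtually RFRS. By Theorem A, $\pi_1(M)$ has a finite index subgroup that is the fundamental group of a compact special cube complex. Haglund and Wise \cite{HW08} show such a group embeds in a right-angled Artin group, and Agol established that right-angled Artin groups are residually finite rationally solvable. Since RFRS is inherited by subgroups, $\pi_1(M)$ is virtually RFRS. I would then pass to the corresponding finite cover $M' \to M$ whose fundamental group is RFRS.

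Next I would verify the manifold hypotheses of Agol's theorem for $M'$, working with a compact core $N'$ (a compact manifold with torus boundary whose interior is homeomorphic to $M'$). Since $M'$ is an aspherical finite-volume hyperbolic $3$-manifold, $N'$ is orientable and irreducible, and $\pi_1(N') = \pi_1(M')$ is RFRS. The boundary of $N'$ is a nonempty disjoint union of tori, so the half-lives-half-dies principle yields $b_1(N') \geq 1$. Agol's fibering theorem then produces a finite cover of $N'$ which fibers over $S^1$; the restriction to the interior gives a finite cover of $M$ which fibers over the circle.

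There is no serious obstacle: all of the work is contained in Theorem A, and the remaining passage from virtual specialness to virtual RFRS and then to virtual fibering is a well-established pipeline. The only tiny verification needed is that Agol's criterion, which is sometimes stated for closed $3$-manifolds, applies equally well to compact orientable irreducible $3$-manifolds with toral boundary, which is standard in its usual formulation.
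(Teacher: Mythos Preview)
Your proposal is correct and follows exactly the route the paper indicates: the corollary is stated there as an immediate consequence of Theorem~A together with Agol's RFRS fibering criterion, and you have simply unpacked the standard pipeline (special $\Rightarrow$ subgroup of a RAAG $\Rightarrow$ RFRS) that makes this immediate. One small slip: finite-volume hyperbolic $3$--manifolds need not be orientable, so your assertion that $N'$ is orientable is not automatic; you should first pass to the orientable double cover (as the paper does in the proof of Theorem~A) before invoking Agol's criterion.
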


  Previous to this paper, Wise's unpublished manuscript \cite{Wise} contained the only proof that a non-compact finite-volume hyperbolic $3$-manifold virtually fibers over a circle.  Our proof does not rely on any results from \cite{Wise} (and neither does the one in \cite{CooperFuter}).  
  
  The second application we provide is to use $H$--wide fillings to explain how the results from \cite[Appendix A]{VH} can be generalized to control the `relative height' of relatively quasiconvex subgroups under Dehn fillings.  We apply this to prove a result (Theorem \ref{t:Henry}) needed by Wilton and Zalesskii in their work \cite{WiltonZalesskii2} on profinite rigidity of $3$--manifold groups.

\subsection{On virtual fibering of hyperbolic $3$--manifolds}

Agol proved in \cite{VH} that fundamental groups of closed hyperbolic $3$--manifolds are virtually special, which implies that these manifolds are virtually Haken, and virtually fibered.  He also proved that Kleinian groups are LERF, and large.

In the non-compact but finite-volume case, the LERF and large results are included in Agol's result, and these manifolds are well known to be Haken.  However virtual fibering in the non-compact case is not covered by \cite{VH}.

We make some comments on Wise's proof of Theorem \ref{t:VS} in the most recent publicly available version of \cite{Wise}, dated October 29, 2012.
This proof relies on \cite[Theorem 16.28]{Wise}, which in turn uses \cite[Theorem 16.16]{Wise} in three places.  This last result is about the separability of quasiconvex subgroups of certain graphs of virtually sparse special groups.  In the proof of \cite[Theorem 16.16]{Wise}, Wise asserts that relative quasiconvexity of subgroups of relatively hyperbolic groups persists under sufficiently long fillings and refers to Osin \cite{osin:peripheral}.  Such a result is not in \cite{osin:peripheral} and in fact a correct formulation requires some care (see Example \ref{ex:long but not wide} below).

As explained above, existing results about controlling relatively quasiconvex subgroups under Dehn filling, such as those in \cite{agm, VH} and also \cite[Theorem 15.6]{Wise}, apply to relatively quasiconvex subgroups which are full, and they do not apply in the setting needed in \cite[Theorem 16.16]{Wise}.

We believe the above issue in the proof of \cite[Theorem 16.16]{Wise} can be fixed using either a `Combination Theorem' approach or techniques as in the current paper.  
However, one of our goals here is to use the advances of the last five years to give an alternative proof of virtual fibering in the non-compact setting.

\subsection{Outline}

In Section \ref{s:background} we recall the basic concepts about relatively hyperbolic groups, relatively quasiconvex subgroups, and Dehn filling.  In Section \ref{s:H-wide} we introduce the notion of $H$--wide fillings.  In Section \ref{s:Properties} we prove that the behavior of a relatively quasiconvex subgroup $H$ under sufficiently long and $H$--wide fillings is well-controlled.  We also give an example to show that this is not true without the assumption of $H$--wideness.  In Section \ref{s:H-wide exists} we prove that in certain circumstances we can ensure the existence of appropriate $H$--wide fillings.  In Section \ref{s:VF} we provide the application to virtual specialness of fundamental groups of finite-volume hyperbolic $3$--manifolds.  Finally, in Section \ref{s:Henry}, we prove that the results about height in the hyperbolic setting from \cite[Appendix A]{VH} can be generalized to control relative height under sufficiently long and $H$--wide fillings.  These results may be of independent interest.  As an application, we prove Theorem \ref{t:Henry}, the result required by Wilton and Zalesskii.

\subsection*{Acknowledgments}
Thanks to Stefan Friedl for asking for an alternative account of virtual fibering for finite-volume hyperbolic $3$--manifolds, and to Henry Wilton for useful discussions and for asking us to prove Theorem \ref{t:Henry}.  Thanks also to Dave Futer and Eduard Einstein for useful comments on an earlier version of this paper.

\section{Background} \label{s:background}

For background on relatively hyperbolic groups, their associated cusped spaces, and relatively hyperbolic Dehn filling see \cite{rhds}.  For background on relatively quasiconvex subgroups see \cite{agm,HruskaQC}.  We always work in a \emph{combinatorial cusped space} $X = X(G,\mc{P},S)$, where $S$ is some chosen generating set for $G$ which also contains generating sets for the peripheral groups $P\in \mc{P}$.  This cusped space contains a copy of the Cayley graph of $G$ with respect to the generators $S$.  The \emph{depth} of a vertex of $X$ is its distance to the Cayley graph.

Suppose that $(G,\mc{P})$ is relatively hyperbolic.  We fix a combinatorial cusped space $X$ for the pair $(G,\mc{P})$ as in \cite{rhds}.  Since $(G,\mc{P})$ is relatively hyperbolic, $X$ is Gromov hyperbolic.  Unless otherwise stated, $\delta$ is a hyperbolicity constant for the space $X$.

Recall that a {\em Dehn filling} of $(G,\mc{P})$ is determined by a collection $\mc{N} = \{ N_P \}_{P \in \mc{P}}$ of normal subgroups $N_P \lhd P$.  The Dehn filling is the quotient
\[	G(\mc{N}) = G / \llangle \cup N_P \rrangle	.	\]
The {\em peripheral groups} of $G(\mc{N})$ are the images of the elements of $\mc{P}$ in $G(\mc{N})$.  We often abbreviate this as $(G,\mc{P}) \to (\overline{G},\overline{\mc{P}})$.  A statement $\mathsf{S}$ holds {\em for all sufficiently long fillings}
if there is a finite set $B \subset \cup P \smallsetminus \{ 1 \}$ so that $\mathsf{S}$ holds for any fillings $G(\mc{N})$ so that $N_P \cap B = \emptyset$ for all $P \in \mc{P}$. 
If $(\overline{G},\overline{\mc{P}})$ is a Dehn filling of $(G,\mc{P})$, with $\overline{G} = G/K$, then one obtains a combinatorial cusped space $\overline{X}$ for $(\overline{G},\overline{\mc{P}})$ by taking $\overline{X}$ equal to $\leftQ{X}{K}$ with self-loops removed.  In fact, since the self-loops do not affect the metric on the zero-skeleton, we ignore the issue of removing them and abuse notation by setting $\overline{X} = \leftQ{X}{K}$.  

 The following result is key to any approach to relatively hyperbolic Dehn filling theorems using the cusped space.

\begin{theorem} \label{t:large balls embed}
Using the cusped spaces just described, let $B$ be a finite metric ball in $X$.  For all sufficiently long fillings the quotient map $X \to \leftQ{X}{K}$ restricts on $B$ to an isometric embedding whose image is a metric ball.
\end{theorem}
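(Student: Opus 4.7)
The plan is to reduce the theorem to a displacement estimate: for each $R$, one exhibits a finite set $F(R) \subset \cup P \smallsetminus \{1\}$ such that, whenever $N_P \cap F(R) = \emptyset$ for every $P \in \mc{P}$, every non-trivial $k \in K = \llangle \cup N_P \rrangle$ satisfies $d_X(x, kx) > 2R$ for all $x \in X$ of depth at most $R$. By the paper's convention of ``sufficiently long,'' this is exactly the condition we are allowed to assume.

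The displacement estimate rests on the geometry of the combinatorial horoballs comprising $X$. In a horoball over a peripheral $P$, translation by $p \in P$ has horizontal offset $|p|_P$ at depth $0$, but this offset shrinks at greater depths; the minimum displacement of the translation is of order $2\log_2 |p|_P$, attained at depth roughly $\log_2 |p|_P$. Therefore, if $F(R)$ contains every element of $P$ of $P$-length at most $2^{R+C}$ (for a suitable constant $C$ depending on $\delta$), no conjugate of an element of any $N_P$ can translate a depth-$\le R$ vertex by distance at most $2R$. A general non-trivial $k \in K$ is a product of such conjugates, and one must verify that cancellations in such a product cannot produce a short translation at shallow depth without forcing a non-trivial peripheral loop to collapse --- which is precisely what the choice of $F(R)$ rules out.

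Given the displacement estimate, the two conclusions of the theorem are quick. If $x, y \in B$ and $d_{\overline{X}}(\bar{x}, \bar{y}) < d_X(x,y) \le 2R$, a geodesic in $\overline{X}$ from $\bar{x}$ to $\bar{y}$ lifts to a path in $X$ from $x$ to $k y$ with $k \in K \smallsetminus \{1\}$ and length $< 2R$; this contradicts the displacement bound, after noting that both $y$ and $ky$ have depth $\le R$ up to a modest enlargement of $R$. The image's being a metric ball then follows because any geodesic in $\overline{X}$ emanating from $\bar{x}_0$ lifts to a geodesic in $X$ from $x_0$, and the isometric embedding just established keeps the endpoint inside $B$.

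I expect the horoball displacement estimate to be the main technical hurdle, specifically the passage from a single conjugate of an $N_P$-generator to an arbitrary element of $K$. The essential point is that short-distance translations at bounded depth would have to arise from ``accidental'' short words in $K$, but the combinatorial horoball structure ensures that any element of $K$ inherits a lower bound on its translation length near the Cayley graph from the $P$-lengths of its defining relators; making this quantitative and uniform across the peripheral structure is where the cusped-space machinery of \cite{rhds} does the main work.
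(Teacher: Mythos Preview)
The paper's own proof is a single sentence: it cites \cite[Theorem 1.1]{osin:peripheral}.  That theorem includes the statement that for any finite $F\subset G\smallsetminus\{1\}$ one can choose fillings with $K\cap F=\emptyset$; applying this with $F=\{g\in G\mid d_X(1,g)\le 6R\}$ immediately yields the isometric embedding of the $R$--ball, and the ``image is a metric ball'' statement follows from path-lifting.

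Your approach is essentially the same argument rephrased geometrically.  The reduction to a displacement estimate (no nontrivial $k\in K$ moves a shallow point a short distance) is exactly equivalent to $K\cap F=\emptyset$ for the finite set $F$ above, so you are not avoiding the citation, just repackaging it.  You correctly flag the passage from a single conjugate $gng^{-1}$ to a general $k\in K$ as the main hurdle, and then defer it to \cite{rhds}; but that passage \emph{is} the entire content of the relatively hyperbolic Dehn filling theorem, so your write-up is not more self-contained than the paper's one-line citation --- it merely substitutes \cite{rhds} for \cite{osin:peripheral} and adds (helpful) geometric commentary.  The sentence ``cancellations in such a product cannot produce a short translation \ldots\ which is precisely what the choice of $F(R)$ rules out'' asserts what has to be proved.

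Two small corrections.  First, your claim that ``the minimum displacement of the translation is of order $2\log_2|p|_P$, attained at depth roughly $\log_2|p|_P$'' is false: at depth $\ge \log_2|p|_P$ the displacement is $1$.  What is true is that the displacement at depth $d\le \log_2|p|_P$ is roughly $2(\log_2|p|_P - d)$.  Second, with that corrected formula, forcing displacement $>2R$ at depth $\le R$ requires excluding elements of $P$--length up to about $2^{2R+C}$, not $2^{R+C}$.  Neither error is fatal, since the argument ultimately rests on the cited theorem anyway.
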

\begin{proof}
This follows immediately from \cite[Theorem 1.1]{osin:peripheral}.
\end{proof}

Using the coarse Cartan-Hadamard Theorem \cite[A.1]{Coulon} and the uniform hyperbolicity of combinatorial horoballs \cite[Theorem 3.8]{rhds} we obtain the following corollary, which was stated in a slightly weaker form as \cite[Proposition 2.3]{agm}.

\begin{proposition}\label{p:agm 2.3}
  For all $\delta>0$ there is a $\delta'>0$ so that if the combinatorial cusped space $X$ is $\delta$--hyperbolic, then for all sufficiently long fillings, the combinatorial cusped space $\overline{X}$ of the Dehn filling is $\delta'$--hyperbolic.
\end{proposition}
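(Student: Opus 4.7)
The plan is to reduce global hyperbolicity of $\overline{X}$ to uniform local hyperbolicity by invoking Coulon's coarse Cartan--Hadamard theorem \cite[A.1]{Coulon}. That theorem produces, for a given target constant $\delta'$, a radius $R = R(\delta')$ such that if every ball of radius $R$ in a suitable space satisfies a thin-triangles condition with constant depending only on $\delta'$, then the whole space is $\delta'$-hyperbolic.

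Given $\delta$, let $\delta_h$ be the universal hyperbolicity constant for combinatorial horoballs from \cite[Theorem 3.8]{rhds}, and set $\delta_0 = \max(\delta, \delta_h)$. Choose $\delta'$ and $R$ from the coarse Cartan--Hadamard theorem so that $\delta_0$-hyperbolicity of every ball of radius $R$ implies global $\delta'$-hyperbolicity. I claim that, for all sufficiently long fillings, every ball of radius $R$ in $\overline{X}$ is $\delta_0$-hyperbolic.

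To see this, let $v$ be a vertex of $\overline{X}$. If the depth of $v$ (the distance from $v$ to the Cayley graph of $\overline{G}$) is at least $R$, then $B_R(v)$ is contained in a single combinatorial horoball of $\overline{X}$. Such a horoball is itself a combinatorial horoball, now over the image peripheral group $\overline{P}$, and so is $\delta_h$-hyperbolic by \cite[Theorem 3.8]{rhds}. If instead the depth of $v$ is less than $R$, then $B_R(v) \subseteq B_{2R}(v_0)$ for some vertex $v_0$ in the Cayley graph of $\overline{G}$. By Theorem \ref{t:large balls embed}, for all sufficiently long fillings $B_{2R}(v_0)$ is the isometric image of a ball in $X$ and hence is $\delta$-hyperbolic. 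In either case, $B_R(v)$ is $\delta_0$-hyperbolic.

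The main obstacle I anticipate is confirming that the precise form of the coarse Cartan--Hadamard theorem applicable here covers $\overline{X}$: such theorems typically carry a simple-connectivity or ``short loops fill'' hypothesis. For $\overline{X}$ this should follow by the same case split: any sufficiently short loop either lies inside a (contractible) combinatorial horoball, or, after requiring the filling to be long enough relative to the loop length, lifts via Theorem \ref{t:large balls embed} to $X$, where it is nullhomotopic by simple connectivity of the cusped space. Once the topological hypothesis is verified, feeding the uniform constant $\delta_0$ into \cite[A.1]{Coulon} yields the required $\delta'$, which depends only on $\delta$ (through $\delta_0 = \max(\delta,\delta_h)$ and the universal constant $\delta_h$).
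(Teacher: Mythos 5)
Your proposal is correct and follows essentially the same route as the paper, which obtains Proposition \ref{p:agm 2.3} precisely by combining Coulon's coarse Cartan--Hadamard theorem \cite[A.1]{Coulon}, the uniform hyperbolicity of combinatorial horoballs \cite[Theorem 3.8]{rhds}, and Theorem \ref{t:large balls embed} for balls near the Cayley graph, so your case split (together with your remark on the simple-connectivity hypothesis) is exactly the intended argument. The only point to tighten is the deep case: take the depth threshold to be a fixed multiple of $R$ rather than $R$ itself, so that no path between two points of $B_R(v)$ can leave the horoball; this ensures the metric induced from $\overline{X}$ on that ball agrees with the intrinsic horoball metric before you invoke its uniform hyperbolicity.
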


\begin{remark}
Proposition \ref{p:agm 2.3} implies that $(\overline{G},\overline{\mc{P}})$ is relatively hyperbolic, using only Theorem \ref{t:large balls embed}.  The proof of Theorem \ref{t:large balls embed} in \cite{rhds} used the bicombing of $X$ by {\em preferred paths}, whereas the proof that the cusped space of $\overline{X}$ is Gromov hyperbolic used a homological bicombing which used an adaptation of results of Mineyev from \cite{min:str}.  Using the above approach allows one to avoid the homological bicombing in \cite{rhds} entirely.
\end{remark}

\begin{definition}
  Suppose that $(G,\mc{P})$ is relatively hyperbolic with associated cusped space $X$.  Let $A$ be a horoball in $X$, and let $R>0$.  A geodesic \emph{penetrates $A$ to depth $R$} or \emph{$R$--penetrates $A$} if it contains a point in $A$ at depth $R$.

  Suppose $H \le G$.  Then $A$ is {\em $R$--penetrated by $H$} if there is a geodesic $\gamma$ with endpoints in $H$ which $R$--penetrates $A$.
\end{definition}

Recall the following result from \cite{MM-P}.

\begin{proposition}\label{prop:MMP A.6} \cite[Proposition A.6]{MM-P}
Let $(G,\mc{P})$ be relatively hyperbolic and $H \le G$ be relatively quasiconvex.  There is a constant $R$ so that whenever a horoball $A$ of $X$ is $R$--penetrated by $H$ then the intersection of $H$ with the stabilizer of the horoball is infinite.
\end{proposition}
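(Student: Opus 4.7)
The plan is to exploit the cusped-space characterization of relative quasiconvexity due to Hruska. Concretely, from $H$ being relatively quasiconvex in $(G,\mc{P})$, one obtains an $H$-invariant subset $Y \subset X$, $\sigma$-quasiconvex for some constant $\sigma$, which contains $H$ and has the crucial feature that $Y$ meets a horoball $A'$ of $X$ at positive depth if and only if $H \cap \Stab(A')$ is infinite. The set $Y$ can be taken to be the image in $X$ of the cusped space for the induced relatively hyperbolic pair $(H,\mc{P}_H)$, where $\mc{P}_H$ is the family (up to $H$-conjugacy) of infinite intersections $H \cap gPg^{-1}$; the natural map is $H$-equivariant and preserves depth, so horoballs of $X$ with finite $H$-stabilizer meet $Y$ only along their depth-$0$ part. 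I propose to take $R := \sigma + 1$.

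Suppose $A$ is $R$-penetrated by $H$ via an $X$-geodesic $\gamma$ joining two points of $H$, and let $p \in \gamma \cap A$ be a point of depth $R$. Since the endpoints of $\gamma$ lie in $H \subset Y$, $\sigma$-quasiconvexity produces $q \in Y$ with $d_X(p,q) \le \sigma < R$. The depth function on $X$ is $1$-Lipschitz, and distinct horoballs of $X$ are disjoint and separated by the depth-$0$ Cayley graph. Hence any path in $X$ from $p$ (which lies at depth $R$ inside $A$) either to a point outside $A$ or to a depth-$0$ point inside $A$ must traverse at least $R$ units of depth, and so has length at least $R$. Since $d_X(p,q) < R$, the point $q$ must lie inside $A$ at strictly positive depth. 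By the defining feature of $Y$, this forces $H \cap \Stab(A)$ to be infinite, as desired.

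The main technical ingredient is the construction of $Y$ and its quasiconvexity, which is the standard cusped-space model of relative quasiconvexity (see \cite{HruskaQC}); this is the one step where I would lean on external machinery rather than work directly. Once $Y$ is in hand, the argument reduces to a short depth-counting exercise exploiting two simple geometric facts about combinatorial horoballs: depth is $1$-Lipschitz, and horoballs are pairwise disjoint. I expect no substantial obstacle beyond setting up $Y$ correctly; in particular the constant $R$ is determined cleanly by $\sigma$ alone.
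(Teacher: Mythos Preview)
The paper does not give its own proof of this proposition; it is simply quoted from \cite[Proposition A.6]{MM-P} and used as a black box throughout.  Your argument is correct and is essentially the argument one finds in \cite{MM-P}: take the image $Y=\check\iota(X_H^{(0)})$ of the cusped space for $(H,\mc{P}_H)$, use its $\sigma$--quasiconvexity together with the $1$--Lipschitz depth function to force a nearby point of $Y$ into the horoball $A$ at positive depth, and then read off infiniteness of $H\cap\Stab(A)$ from the construction of $\check\iota$.

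One small point worth making explicit: in this paper the peripheral structure $\mc{D}$ on $H$ is taken to consist of \emph{maximal uniquely parabolic} subgroups, which in principle could include some finite ones; with that choice, ``$Y$ meets $A$ at positive depth'' would only yield that $H\cap\Stab(A)$ contains a conjugate of some $D\in\mc{D}$, not directly that it is infinite.  You sidestep this correctly by choosing $\mc{P}_H$ to consist only of the \emph{infinite} intersections $H\cap gPg^{-1}$, and then invoking the equivalence of the various notions of relative quasiconvexity (as in \cite{HruskaQC} and \cite[Appendix A]{MM-P}) to know that the corresponding image is still quasiconvex.  That is exactly the right move; just make sure it is stated clearly when you write this up.
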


  The following is a combination of Lemma 3.3 from \cite{GM-splittings}, and a statement implicit in its proof.
\begin{lemma}\label{l:GM-splittings 3.3} \cite[Lemma 3.3]{GM-splittings}
  Suppose $(G,\mc{P})$ is relatively hyperbolic, with $\delta$--hyperbolic combinatorial cusped space $X$.  Suppose further that $P_1$ and $P_2$ are distinct conjugates of elements of $\mc{P}$, and that $F = P_1 \cap P_2$.  Then $F$ acts freely on some set $Q$ in $X$ which lies in the Cayley graph and has diameter (in $X$) at most $2\delta + 1$.

  In particular, there is a constant $C$ depending only on $\delta$ and the cardinality of the generating set $S$ so that $\# F\le C$.
\end{lemma}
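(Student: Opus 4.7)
The strategy is to exploit the geometry of the bridge between the two distinct horoballs $A_1, A_2 \subseteq X$ stabilized by $P_1$ and $P_2$, respectively.  These horoballs are disjoint since distinct peripheral cosets in $G$ are disjoint.  The subgroup $F = P_1 \cap P_2$ acts on $X$ by isometries, preserving each $A_i$ and the Cayley graph.

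I first locate the bridge.  Choose a geodesic $\gamma$ in $X$ realizing $d_X(A_1, A_2) = d_0 \ge 1$.  Since any vertex at depth $k$ in a combinatorial horoball is at distance at least $k$ from any vertex outside that horoball, entering a horoball only increases the distance to the other (disjoint) horoball; hence the endpoints $p_1, p_2$ of $\gamma$ lie at depth $0$, that is, in the Cayley graph.  For each $f \in F$, the translate $f\gamma$ is another geodesic of length $d_0$ realizing $d_X(A_1, A_2)$, with endpoints $fp_1$ and $fp_2$ also in the Cayley graph.

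The next step uses $\delta$-hyperbolicity.  By convexity of combinatorial horoballs in $X$, a geodesic $\alpha$ from $p_1$ to $fp_1$ lies in $A_1$ and a geodesic $\beta$ from $p_2$ to $fp_2$ lies in $A_2$, giving a geodesic quadrilateral with sides $\gamma, \beta, (f\gamma)^{-1}, \alpha^{-1}$ which is $2\delta$-thin.  I would choose a vertex $m$ on $\gamma$ in the Cayley graph with $d(m, A_i)$ comparable to $d_0/2$, so that $m$ is sufficiently far from both horoballs to be forced within $2\delta$ of $f\gamma$ (rather than $\alpha$ or $\beta$).  Using the arc-length parametrizations of $\gamma$ and $f\gamma$ (both are length-$d_0$ minimizers of $d(A_1,A_2)$, so the level sets $d(\cdot, A_1) = t$ identify corresponding points), a careful comparison then identifies the close point on $f\gamma$ as $fm$, up to a discrete $\pm 1$ error from vertex alignment, yielding $d_X(m, fm) \le 2\delta + 1$.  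Setting $Q = Fm$, the orbit lies in the Cayley graph, $F$ acts freely (left multiplication is free on vertices of the Cayley graph), and $\mathrm{diam}_X(Q) \le 2\delta + 1$.

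The main obstacle is the careful quadrilateral comparison needed to extract the sharp bound $2\delta + 1$, together with the edge case in which $d_0$ is too small for a mid-bridge vertex far from both horoballs to exist; in that case one instead takes $m$ to be an endpoint of $\gamma$ and argues by comparing a near-degenerate triangle.  For the ``in particular'' statement, the orbit $Q$ is contained in an $X$-ball of radius $2\delta + 1$ about $m$; since the valence of vertices in $X$ is bounded by a function of $|S|$, such a ball contains at most $C = C(\delta, |S|)$ vertices, and the freeness of the action gives $\#F \le \#Q \le C$.
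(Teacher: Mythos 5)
First, a point of orientation: this paper does not prove Lemma \ref{l:GM-splittings 3.3} itself; it quotes it from \cite{GM-splittings}, where the argument runs through a bi-infinite geodesic joining the two parabolic fixed points of the horoballs $A_1,A_2$ and the fact that its $F$--translates $2\delta$--fellow-travel it. Your proposal instead builds a ``bridge'' realizing $d_X(A_1,A_2)$, and this is where it breaks. The opening claim that $A_1$ and $A_2$ are disjoint because ``distinct peripheral cosets are disjoint'' is false: only cosets of the \emph{same} $P\in\mc{P}$ are disjoint or equal, whereas $P_1=P^{g}$ and $P_2=Q^{g}$ with $P\neq Q\in\mc{P}$ are distinct conjugates whose horoballs are based on the cosets $gP$ and $gQ$, which share the depth-zero vertex $g$. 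In that situation $d_X(A_1,A_2)=0$, there is no bridge geodesic, and your set $Q$ is never constructed; note that the natural candidate, an $F$--orbit inside the overlap $gP\cap gQ$, is essentially the set whose size the lemma is trying to bound, so this is not a removable edge case but the heart of the matter. The same problem afflicts small positive $d_0$, where no vertex $m$ lies more than $2\delta$ from both horoballs; you only gesture at a ``near-degenerate triangle'' there. The cited proof avoids all of this because distinct horoballs always have distinct parabolic points at infinity, so the bi-infinite geodesic between them always exists.

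Second, even in the favorable case $d_0>4\delta+2$ your quantitative claim is not established. Your own setup gives: $m$ lies within $2\delta$ of some $y\in f\gamma$, and since arc length along $f\gamma$ from $fp_1$ equals distance to $A_1$, the parameter of $y$ differs from that of $fm$ by at most $d_X(m,y)\le 2\delta$; hence $d_X(m,fm)\le 4\delta$, not $2\delta+1$. The promised ``careful comparison'' extracting $2\delta+1$ (with a $\pm1$ discreteness error) is not supplied, and it is not clear it can be extracted from this configuration. A $4\delta$--type bound would still yield the cardinality conclusion with some $C(\delta,|S|)$ --- your freeness and ball-counting steps are fine --- but it does not prove the lemma as stated, and in any case the untreated overlapping-horoball situation above means the statement is not proved.
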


The second part of the Lemma says that if $P_1$ and $P_2$ are distinct maximal parabolics, then $\#P_1\cap P_2 \le C$.
In other words, the family $\mc{P}$ is {\em $C$--almost malnormal}.   In particular, for a parabolic subgroup $A$ of size more than $C$, there is no ambiguity about which $g \in G$ and which $P \in \mc{P}$ has $A \le P^g$ (up to the choice of conjugating element in $gP$).

The following result was stated without proof as \cite[Proposition 3.4]{GM-splittings}.  The proof that we provide here is more elementary than the one suggested in \cite{GM-splittings}.

\begin{proposition}\label{p:GM-splittings 3.4}
 If $(G,\mc{P})$ is relatively hyperbolic, and $\mc{P}$ is $C$--almost malnormal, then for all sufficiently long fillings $(\overline{G},\overline{\mc{P}})$ of $(G,\mc{P})$, the collection $\overline{\mc{P}}$ is $C$--almost malnormal.
\end{proposition}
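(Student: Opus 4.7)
The plan is to derive a contradiction from the failure of $C$-almost malnormality. Suppose some sufficiently long filling $(\overline{G},\overline{\mc{P}})$ has distinct conjugates $\overline{P}_1,\overline{P}_2$ of peripherals with $|\overline{F}|:=|\overline{P}_1\cap\overline{P}_2|>C$; I will exhibit distinct conjugates $P'_1,P'_2\le G$ of peripherals satisfying $|P'_1\cap P'_2|>C$, violating the $C$-almost malnormality of $\mc{P}$.

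Using Proposition~\ref{p:agm 2.3} I fix a hyperbolicity constant $\delta'$ for $\overline{X}$ uniform over all sufficiently long fillings, and using Theorem~\ref{t:large balls embed} together with $G$-equivariance I arrange, by imposing a further length condition, that every metric ball $B_R(g)\subset X$ about a Cayley graph vertex $g\in G$ embeds isometrically into $\overline{X}$, for some $R$ large compared to $\delta'$ and an auxiliary constant $D$ below. Applying Lemma~\ref{l:GM-splittings 3.3} in $\overline{X}$ yields a set $\overline{Q}$ in the Cayley graph on which $\overline{F}$ acts freely, of $\overline{X}$-diameter at most $2\delta'+1$. By inspecting the proof of Lemma~\ref{l:GM-splittings 3.3}, I would select $q_0\in\overline{Q}$ within bounded $\overline{X}$-distance $D$ (depending only on $\delta'$) from each horoball $\overline{A}_i$ stabilized by $\overline{P}_i$; the relevant point is that $\overline{F}$ consists of torsion fixing both horoball centers at infinity, so its action preserves a geodesic between those centers, and a bounded orbit near one endpoint is automatically close to that horoball.

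Lift $q_0$ to $g_0\in G$. The isometric embedding $B_R(g_0)\hookrightarrow\overline{X}$ (with $R>D+2\delta'+1$) gives, for each $\overline{f}\in\overline{F}$, a unique preimage $h_{\overline{f}}\in B_R(g_0)\cap G$ of $\overline{f}\cdot q_0$, and hence a canonical lift $f_\ast:=h_{\overline{f}}g_0^{-1}$ of $\overline{f}$ with $d_X(f_\ast g_0,g_0)\le 2\delta'+1$. The same embedding produces, for each $i\in\{1,2\}$, a unique horoball $A_i\subset X$ meeting $B_R(g_0)$ with $\pi(A_i)=\overline{A}_i$; uniqueness follows from injectivity of $\pi|_{B_R(g_0)}$, pairwise disjointness of distinct horoballs in $X$, and connectedness of $\overline{A}_i\cap B_R(q_0)$. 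Setting $P'_i:=\Stab_G(A_i)$ one has $\pi(P'_i)=\overline{P}_i$, so $P'_1,P'_2$ are distinct conjugates of peripherals in $G$.

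For each $\overline{f}\in\overline{F}$ and $i\in\{1,2\}$, the horoball $f_\ast\cdot A_i$ projects to $\overline{f}\cdot\overline{A}_i=\overline{A}_i$ and contains a point within $X$-distance $D+(2\delta'+1)<R$ of $g_0$, so by uniqueness $f_\ast\cdot A_i=A_i$, giving $f_\ast\in P'_i$. Since $\overline{f}\mapsto f_\ast$ is a set-theoretic section of $\pi$, we have produced $|\overline{F}|>C$ distinct elements of $P'_1\cap P'_2$, the desired contradiction. The main obstacle in carrying out this plan is the geometric step of selecting $q_0$ at bounded distance from both horoballs; this is not literally in the statement of Lemma~\ref{l:GM-splittings 3.3}, since $d_{\overline{X}}(\overline{A}_1,\overline{A}_2)$ is not a priori bounded, so some care is needed either to extract this from the construction in the proof of Lemma~\ref{l:GM-splittings 3.3} or to argue directly using the fixed-set of $\overline{F}$ on the geodesic between the two horoball centers. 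The remaining lifting arguments are essentially standard consequences of Theorem~\ref{t:large balls embed}.
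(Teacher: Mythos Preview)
The gap you flag at the end is not a technicality that can be patched ``with some care''; it is the heart of the difficulty, and your proposed fixes do not work. There is no reason for the two horoballs $\overline{A}_1,\overline{A}_2$ to be at bounded distance in $\overline{X}$: think of two maximal parabolics in a cusped hyperbolic $3$--manifold group --- their horoballs can be arbitrarily far apart, and this remains true after any long filling. Nothing in the proof of Lemma~\ref{l:GM-splittings 3.3} produces a point close to both horoballs; the set $Q$ there is a single bounded orbit on the geodesic joining the two parabolic fixed points, and that geodesic can be as long as you like. Your alternative suggestion (use that $\overline{F}$ coarsely fixes the whole geodesic) gives you bounded-diameter $\overline{F}$--orbits near each horoball separately, but not a single point near both. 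As a result the crucial step --- producing a \emph{single} horoball $A_i\subset X$ meeting $B_R(g_0)$ --- fails outright for at least one $i$, and if you instead lift at two different basepoints you get two different sections $\overline{f}\mapsto f_\ast$ that differ by elements of $K$, destroying the conclusion $f_\ast\in P'_1\cap P'_2$.

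The paper's argument confronts exactly this issue. Rather than lifting at a single point, it takes $\overline{F}$--invariant sets $Q_1\subset \overline{A}_1$ and $Q_2\subset \overline{A}_2$ at controlled depth, then (and this is the step you are missing) if a geodesic from $Q_1$ to $Q_2$ penetrates some other horoball $B$ deeply, it \emph{replaces} $\overline{A}_2$ by $B$ and iterates, so that eventually all geodesics between $Q_1$ and $Q_2$ stay in a bounded neighborhood of the Cayley graph. Only then is the lifting performed --- not of a single ball, but of the whole (possibly long) configuration $Q_1\cup Q_2$ together with the geodesics between them, using that the quotient map is injective on every $100\delta'$--ball about a Cayley-graph vertex. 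Quadrilaterals with short opposite sides can be filled by disks staying near the Cayley graph, so the lift is consistent along the entire path, and one obtains a single bijective lift of $\overline{F}$ stabilizing two distinct horoballs of $X$. Your single-ball lift cannot substitute for this extended lift.
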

\begin{proof}
 By Proposition \ref{p:agm 2.3}, there is a $\delta'$ so that for all sufficiently long fillings $(\overline{G},\overline{\mc{P}})$ of $(G,\mc{P})$, the cusped space $\overline{X}$ for $(\overline{G},\overline{\mc{P}})$ is $\delta'$--hyperbolic.  Fix a filling $(\overline{G},\overline{\mc{P}})$ so that the induced map between cusped spaces is injective on any ball of radius $100\delta'$ centered in the Cayley graph of $X$ (see Theorem \ref{t:large balls embed}).
 
 Suppose that $\overline{P}_1$ and $\overline{P}_2$ are distinct conjugates of elements of $\overline{\mc{P}}$, and let $\overline{F} = \overline{P}_1 \cap \overline{P}_2$.  There are horoballs $\overline{A}_1$ and $\overline{A}_2$ in $\overline{X}$ so that $\overline{P}_i$ stabilizes $\overline{A}_i$.  As explained in the proof of \cite[Lemma 3.3]{GM-splittings}, the subgroup $\overline{F}$ acts freely on a subset of the Cayley graph of $\overline{G}$ in $\overline{X}$ of diameter at most $2\delta'+1$.  Moreover, it is clear by considering the $\overline{F}$--orbit of a geodesic between the limit points of $\overline{A}_1$ and $\overline{A}_2$ in $\partial X$ that there are also subsets $Q_1$ and $Q_2$ of diameter at most $2\delta'+1$ so that $\overline{F}$ acts freely on each $Q_i$ and $Q_i$ is contained in $\overline{A}_i$ at depth $5\delta'$.
 
 Suppose that a geodesic between $Q_1$ and $Q_2$ $10\delta'$--penetrates some other horoball $B$.  Then let $B$ be the closest such horoball to $\overline{A}_1$, and replace $\overline{A}_2$ by $B$ and $Q_2$ by an $\overline{F}$--invariant subset $Q$ of $B$ at depth $5\delta'$ and diameter at most $2\delta'+1$.  In this manner, we may suppose that any geodesic between $Q_1$ and $Q_2$ stays within a $10\delta'$--neighborhood of the Cayley graph in $\overline{X'}$.
 
 We may thus lift $Q_1$, $Q_2$ and the geodesics between them to $X$.  To see that this is possible, consider that any pair of points in $Q_1$ and pair of points in $Q_2$ are the vertices of a geodesic quadrilateral with two sides of length at most $5\delta'$ and so can be filled with a disk which lies entirely within a $20\delta'$--neighborhood of the Cayley graph.  In an entirely similar way to the proof of \cite[Theorem 4.1]{GM-splittings} (a result whose proof did not rely on the result we are currently trying to prove), it now follows that $\overline{F}$ can be lifted bijectively to a finite subgroup $F$ of $G$ which stabilizes two distinct horoballs.  Because $\mc{P}$ is $C$--almost malnormal, it follows that $|\overline{F}| = |F| \le C$, which is what we were required to prove.
\end{proof}

\begin{definition}
 Suppose that $(G,\mc{P})$ is relatively hyperbolic and that $X$ is a cusped space for $(G,\mc{P})$ which is $\delta$--hyperbolic.  A parabolic subgroup $Q$ of $G$ is {\em uniquely parabolic} if there is a unique conjugate of an element of $\mc{P}$ which contains $Q$.
\end{definition}

It follows from \cite[Lemma 3.3]{GM-splittings} that there is a constant $C$ so that any parabolic subgroup of size more than $C$, and in particular any infinite parabolic subgroup, is uniquely parabolic.

It is an immediate consequence of the definition that a uniquely parabolic subgroup stabilizes a unique horoball in the cusped space.

In order to fix notation, we recall a definition from \cite[Section 3]{agm}, and slightly adapt the notation from there.
Let $H \le G$.  Suppose that $\mc{D}$ is a collection of representatives of $H$--conjugacy classes of maximal uniquely parabolic subgroups of $H$.  Given $D \in \mc{D}$, there exists $P_{D} \in \mc{P}$ and $c_D \in G$ so that $D \le c_DP_{D}c_D^{-1}$.  We fix such $c_D$, and suppose that $c_D$ is a shortest possible choice.  We abuse notation slightly and write $(H,\mc{D}) \le (G,\mc{P})$. Let $Y$ be a combinatorial cusped space for the pair $(H,\mc{D})$.  
The inclusion $\iota \co H \into G$ extends to an $H$--equivariant Lipschitz map $\check{\iota} \co Y^{(0)} \to X$ as follows:

A vertex in a horoball of $Y$ is determined by a triple $(sD,h,n)$ where $s \in H$, $D \in \mc{D}$ and $n \in \mathbb N$.  We define
\[	\check{\iota} (sD,h,n) = (sc_DP_{D},hc_D,n)	.	\]
It follows from \cite[Lemma 3.1]{agm} that $\check{\iota}$ is $H$--equivariant and $\alpha$--Lipschitz for some $\alpha$.  We refer to $\check{\iota}$ as the {\em induced map on cusped spaces}.  Whenever we have a pair $(H,\mc{D}) \le (G,\mc{P})$ as above, we fix the subgroups $P_D \in \mc{P}$ and the (shortest) elements $c_D$ as above.

\begin{definition}
 Suppose that $(G,\mc{P})$ is relatively hyperbolic and that $H \le G$ is a subgroup.  Suppose that $\mc{D}$ consists of a set of representatives of $H$--conjugacy classes of maximal uniquely parabolic subgroups of $H$.  Then $(H,\mc{D})$ is {\em relatively quasiconvex} in $(G,\mc{P})$ if the image of the $0$--skeleton of the cusped space of $(H,\mc{D})$ in the cusped space of $(G,\mc{P})$ is $\lambda$--quasiconvex for some $\lambda$.  In this case we say that $\lambda$ is a \emph{quasiconvexity constant for $(H,\mc{D})$ in $(G,\mc{P})$}.
\end{definition}
This definition is slightly different than the one in \cite{agm}, since we do not assume that $(H,\mc{D})$ is relatively hyperbolic.  However, we do assume that $\mc{D}$ consists of maximal uniquely parabolic subgroups of $H$.  If the image of the cusped space of $(H,\mc{D})$ in the cusped space of $(G,\mc{P})$ is quasiconvex, then it follows from the proof of \cite[Theorem A.10]{MM-P} that $H$ is relatively quasiconvex in the sense of Hruska \cite{HruskaQC}, and hence that $(H,\mc{D})$ is relatively hyperbolic.  Therefore, this definition is equivalent to others in the literature, by the results in \cite[Appendix A]{MM-P}.

\section{$H$-wide fillings}\label{s:H-wide}

\begin{definition}
Let $P$ be a group, $B \le P$ a subgroup, and $S$ a finite set.  A normal subgroup $N \unlhd P$ is {\em $(B,S)$--wide in $P$} if whenever there are $b \in B$ and $s \in S$ so that $bs \in N$ we have $s \in B$.
\end{definition}

\begin{definition} \label{d:wide}
Let $(G,\mc{P})$ be relatively hyperbolic and let $(H,\mc{D}) \le (G,\mc{P})$ be relatively quasiconvex.  Let $S \subseteq \bigcup \mc{P} \smallsetminus \{ 1 \}$.  A filling 
\[	G \to \overline{G} = G(\mc{N})	\]
is {\em $(H,S)$--wide} if for any $D \in \mc{D}$ (with $D \le P_{D}^{c_D}$ as above) the normal subgroup $N_{D}$ is $\left(D^{c_D^{-1}}, S \cap P_D\right)$--wide in $P_D$. (To simplify notation, for $D \in \mc{D}$, we write $N_D$ for $N_{P_D}$.)
\end{definition}

Since it is possible that $P_{D_1} = P_{D_2}$ for $D_1 \ne D_2$, it is also possible that $N_{D_1} = N_{D_2}$.  We also remark that $N_{D}$ need not be a subgroup of $H$.

In place of the statement in Definition \ref{d:wide} above, we sometimes use the equivalent formulation that for any $D \in \mc{D}$ (with $D \le P_D^{c_D}$ as above), any $d \in D$ and any $w \in S \cap P_{D}$, if $d c_Dw c_D^{-1} \in N_D^{c_D}$, then $c_Dw c_D^{-1} \in  D$.

\begin{definition}
We say that a property $\mathsf{P}$ holds {\em for all sufficiently long and $H$-wide fillings} if there is a finite set $S \subseteq \bigcup \mc{P} \smallsetminus \{ 1 \}$ so that $\mathsf{P}$ holds for any $(H,S)$--wide filling $G \to G(\mc{N})$ for which $N \cap S = \emptyset$ for each $N \in \mc{N}$.
\end{definition}

\begin{remark}
  In the definition of $(H,S)$--wide, one should think of $S$ containing all nontrivial elements of $\bigcup \mc{P}$ in a large ball around the identity.  This ensures that, for each $D\in \mc{D}$, a ``big neighborhood'' of $\overline{D}=D/(N_{D}^{c_D}\cap D)$ embeds in $\overline{P}_{D}^{c_D} = (P_{D}/N_{D})^{c_D}$, ruling out behavior like that pictured in Figure \ref{fig:Hwide}.
\end{remark}
\begin{figure}[htbp]
  \centering
  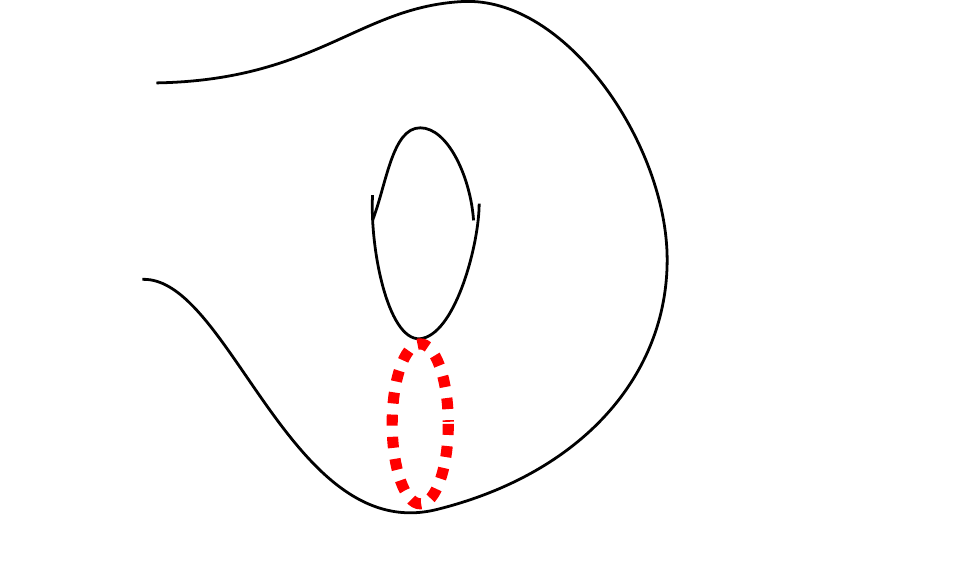
  \caption{A cartoon of the coset graph for $N_{D}^{c_D}$ in $P_{D}^{c_D}$
   and the kind of loop forbidden by $(H,S)$--wideness with $w\in S$.}
  \label{fig:Hwide}
\end{figure}

Previous quasiconvex Dehn filling results \cite{VH, agm, MM-P} have been in terms of ``$H$--fillings'', whose definition we now recall.
\begin{definition}\label{def:Hfill}
  Let $(G,\mc{P})$ be relatively hyperbolic, let $H<G$ be relatively quasiconvex, and let $\mc{N}=\{N_P\}_{P\in\mc{P}}$ be a collection of filling kernels.  The Dehn filling $G\to \overline{G}=G(\mc{N})$ is said to be an \emph{$H$--filling} if, whenever $\#(P^g\cap H)=\infty$, the kernel $N_P^g$ lies entirely in $H$.
\end{definition}
  
\begin{remark}
In \cite{agm} the condition `$P^g\cap H\neq\{1\}$' was used instead of `$\#(P^g\cap H)=\infty$.' As explained in \cite{MM-P}, the formulation in Definition \ref{def:Hfill} is the correct one if there is torsion, and this is the definition that is used in \cite{VH,MM-P}.
\end{remark}

The following result shows that, at least for long fillings, the notion of $H$--wide filling generalizes that of $H$--filling.

\begin{lemma} \label{l:long H-fill is wide}
  Let $(G,\mc{P})$ be relatively hyperbolic, and let $H<G$ be relatively quasiconvex.
  For any finite $S \subset G$ any sufficiently long $H$--filling is $(H,S)$--wide.
\end{lemma}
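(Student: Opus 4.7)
The plan is to split over $D \in \mc{D}$ (which is finite, since $(H,\mc{D})$ is itself relatively hyperbolic) according to whether $D$ is finite or infinite; the key observation is that the infinite case follows for free from the $H$-filling hypothesis, while the finite case requires only a straightforward finite avoidance. For each $D \in \mc{D}$, one must show that $N_D = N_{P_D}$ is $(D^{c_D^{-1}}, S \cap P_D)$-wide in $P_D$, i.e.\ that no product $bs$ with $b \in D^{c_D^{-1}}$ and $s \in (S \cap P_D) \smallsetminus D^{c_D^{-1}}$ lies in $N_D$.

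For $D$ infinite, I would first argue that $D = H \cap P_D^{c_D}$. Indeed, $H \cap P_D^{c_D}$ is an infinite parabolic subgroup of $H$, hence uniquely parabolic by Lemma \ref{l:GM-splittings 3.3}, and the maximality of $D$ among uniquely parabolic subgroups of $H$ forces the equality; in particular $D^{c_D^{-1}} = H^{c_D^{-1}} \cap P_D$. The $H$-filling condition (applied with $g = c_D$, since $\#(P_D^{c_D} \cap H) = \#D = \infty$) gives $N_D^{c_D} \le H$, i.e.\ $N_D \le H^{c_D^{-1}}$. Then for any $b \in D^{c_D^{-1}}$ and $s \in S \cap P_D$ with $bs \in N_D$, the element $s = b^{-1}(bs)$ lies in $H^{c_D^{-1}} \cap P_D = D^{c_D^{-1}}$, and the forbidden configuration cannot occur. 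No length restriction is needed here.

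For $D$ finite, simply take the forbidden set
\[ B_D := \{\, bs : b \in D^{c_D^{-1}},\ s \in (S \cap P_D) \smallsetminus D^{c_D^{-1}} \,\} \subseteq P_D. \]
This is a finite set, and $1 \notin B_D$ because $bs = 1$ would force $s = b^{-1} \in D^{c_D^{-1}}$. Let $B := \bigcup_{D} B_D$ as $D$ ranges over the finitely many finite members of $\mc{D}$; then $B$ is a finite subset of $\bigcup \mc{P} \smallsetminus \{1\}$. Any $H$-filling $G \to G(\mc{N})$ with $N \cap B = \emptyset$ for every $N \in \mc{N}$ satisfies the wideness condition for finite $D$ by construction, and combined with the infinite case yields that the filling is $(H,S)$-wide. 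I do not expect any serious obstacle: the main conceptual point is that the $H$-filling hypothesis is already strong enough to control precisely those $D$ where the kernel $N_D$ could fail to respect $D$, leaving only a short finite bookkeeping step for the small parabolics.
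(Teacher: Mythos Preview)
Your proof is correct and takes a genuinely different, more elementary route than the paper's.

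The paper gives a uniform geometric argument: choosing the filling long enough that any nontrivial element of $N_D$ forces a geodesic to $(R+M+2\delta+2)$--penetrate the horoball of $P_D$, they use a thin-quadrilateral argument to show that the geodesic $[1,d]$ must then $R$--penetrate the horoball based on $c_D P_D$, whence Proposition~\ref{prop:MMP A.6} forces $H\cap P_D^{c_D}$ to be infinite.  Only at this point do they invoke the $H$--filling hypothesis to get $N_D^{c_D}\le D$ and conclude.  In effect, their ``long'' hypothesis rules out the finite--$D$ case dynamically rather than by explicit avoidance.

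Your approach instead separates the two regimes at the outset.  For infinite $D$ you observe (correctly, via maximality of $D$ among uniquely parabolic subgroups of $H$ and the remark following the definition of uniquely parabolic) that $D = H\cap P_D^{c_D}$, so the $H$--filling condition already gives $N_D^{c_D}\le D$ with no length restriction at all.  For finite $D$ the obstruction set $B_D$ is itself finite, and a simple avoidance suffices.  This bypasses both Proposition~\ref{prop:MMP A.6} and the quadrilateral estimate entirely.  What you gain is a shorter, purely algebraic argument that makes transparent exactly where the $H$--filling hypothesis does work (infinite $D$) and where ``long'' does work (finite $D$); what the paper's argument buys is uniformity and a template that matches the penetration-based proofs of Section~\ref{s:Properties}.
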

\begin{proof}
  Let $R$ be the constant from Proposition \ref{prop:MMP A.6}, as applied to $H$.  Let $\mc{D}$ be the peripheral structure on $H$ consisting of maximal uniquely parabolic subgroups, and $\{P_{D}\in \mc{P}\}$ and $\{c_D\in G\}$ the elements described before, so that $D<P_{D}^{c_D}$ for each $D\in \mc{D}$.   Let $M = \max\{d_X(1,c_D)\}+ \max\{d_X(1,w)\mid w\in S\}$.
Choose filling kernels $\{N_j\lhd P_j\}$ determining a sufficiently long $H$--filling so that any geodesic joining $1$ to $n\in N_j\setminus \{1\}$ must $(R+M+2\delta+2)$--penetrate the horoball stabilized by $P_j$.

Now suppose that for some $w\in S$ and some $d\in D\in \mc{D}$ we have $dw^{c_D}\in N_D^{c_D}$.  We must show that $w^{c_D}\in D$.  Since $(dw^{c_D})^{c_D^{-1}}\in N_{D}$, the geodesic from $1$ to $(dw^{c_D})^{c_D^{-1}}$ must $(R+M+2\delta+2)$--penetrate the horoball stabilized by $P_{D}$.  In particular, $d_X(1,(dw^{c_D})^{c_D^{-1}})$ must be at least $2R + 2M + 4\delta + 4$.  Consider the quadrilateral with vertices $1,c_D,dc_D w, d$.  The segment $[c_D,d c_D w]$ is the translate of a geodesic $[1,(dw^{c_D})^{c_D^{-1}}]$ by $c_D$, so it has length at least $2R + 2M + 4\delta + 4$ and $(R+M+2\delta+2)$--penetrates the horoball based on $c_DP_{D}$.  Since the sides $[1,c_D]$ and $[dc_D w, d]$ have length at most $M$, the side $[1,d]$ must pass within $2\delta$ of $[c_D,d c_D w]$ at its midpoint, which is also its deepest point in the horoball.  In particular $[1,d]$ must $R$--penetrate the horoball based on $c_D P_{D}$.  Since $d\in H$, Proposition \ref{prop:MMP A.6} implies that $H\cap P_D^{c_D} = D$ is infinite.  Since the filling kernels $\mc{N}$ determine an $H$--filling, this implies that $N_D<D$, and in particular, the element $dw^{c_D}\in D$.  It immediately follows that $w^{c_D}\in D$, so we have established that the filling is $(H,S)$--wide.
\end{proof}

In any case, if $(H,\mc{D})$ is a relatively quasiconvex subgroup of the relatively hyperbolic pair $(G,\mc{P})$, any Dehn filling of $(G,\mc{P})$ induces a Dehn filling of $(H,\mc{D})$, which may or may not inject into the filling of $G$.
\begin{definition}\label{def:inducedfilling}
  Let $(G,\mc{P})$ be relatively hyperbolic, and let $H<G$ be relatively quasiconvex.  Let $\mc{D}$ be the canonical (uniquely parabolic) peripheral structure on $H$, so each $D\in \mc{D}$ is contained in some $P_D^{c_D}$ for a unique $P_D\in \mc{P}$, and some shortest $c_D$.  Let $\mc{N} = \{N_P\}_{P\in \mc{P}}$ be a collection of filling kernels for $(G,\mc{P})$.  The \emph{induced filling kernels} for $(H,\mc{D})$ are the collection $\mc{N}_H = \{N_{P_D}^{c_D}\cap D\}_{D\in \mc{D}}$.  These define the \emph{induced filling}
\[ (H,\mc{D})\longrightarrow (H(\mc{N}_H),\overline{\mc{D}}), \]
where $\overline{\mc{D}}$ consists of the images of the elements of $\mc{D}$ in $H(\mc{N}_H)= H/\llangle \bigcup \mc{N}_H\rrangle_H$.
\end{definition}

There is a natural map from $H(\mc{N}_H)$ to the filling $G(\mc{N})$.
\section{Properties of $H$--wide fillings} \label{s:Properties}

In this section we prove various results which imply that a relatively quasiconvex subgroup $H$ can be controlled in $H$--wide fillings.  These results should be compared to those in \cite[Section 4]{agm}, where analogous results are proved for the behavior of a full relatively quasiconvex subgroup $H$ under sufficiently long $H$--fillings.

Let $(G,\mc{P})$ be relatively hyperbolic.  According to Proposition \ref{p:agm 2.3}, there exists a constant $\delta$ so that the cusped space for $G$ is $\delta$--hyperbolic, and moreover the induced cusped spaces for sufficiently long fillings of $(G,\mc{P})$ are also $\delta$--hyperbolic.  In this section, we assume that $\delta$ is such a constant, and that all fillings we perform are long enough so that the cusped spaces of the filled groups are $\delta$--hyperbolic.

The following lemma is a reformulation of \cite[Lemma 4.1]{agm}.

\begin{lemma} \label{l:4.1}
Suppose $(G,\mc{P})$ is relatively hyperbolic, and that $L_1, L_2  \ge 10\delta$.  For sufficiently long fillings $\pi \co G \to \overline{G} = G/K$ with induced map between cusped spaces $\pi \co X \to \overline{X}$, and any geodesic $\gamma$ in $X$ either:
\begin{enumerate}
\item There is a $10\delta$--local geodesic in $\overline{X}$ between the endpoints of $\pi(\gamma)$ which lies in a $2$--neighborhood of $\pi(\gamma)$ and agrees with $\pi(\gamma)$ in the $L_1$--neighborhood of the Cayley graph in $\overline{X}$; or
\item There is a horoball $A$ in $X$ so that $\gamma$ $L_2$--penetrates $A$ in a segment $[x,y]$ with $x,y\in G$, and there is some $k \in K$ stabilizing $A$ so that $d_X(x,k.y) \le 2L_1+3$.
\end{enumerate}
\end{lemma}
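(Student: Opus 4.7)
The plan is to decompose $\gamma$ into its portions in the Cayley graph and its maximal subsegments penetrating horoballs, and to analyze each piece after projection to $\overline{X}$, using that sufficiently long fillings embed large balls isometrically.

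First I would choose a filling long enough, via Theorem \ref{t:large balls embed} and Proposition \ref{p:agm 2.3}, so that $\overline{X}$ is $\delta$-hyperbolic and every ball in $X$ of radius $R=10(L_1+L_2)+100\delta$ around a Cayley-graph vertex embeds isometrically into $\overline{X}$. Write $\gamma=\alpha_0\sigma_1\alpha_1\cdots\sigma_k\alpha_k$, where each $\sigma_i$ is a maximal subsegment of $\gamma$ contained in a horoball $A_i$ with Cayley-graph endpoints $x_i,y_i\in G$, and each $\alpha_j$ lies in the Cayley graph. By the embedding property, each $\pi(\alpha_j)$ is a geodesic in $\overline{X}$, as is $\pi(\sigma_i)$ whenever $\sigma_i$ does not $L_2$-penetrate $A_i$; these pieces therefore contribute no obstruction to local geodesicity of $\pi(\gamma)$.

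Next, for each $\sigma_i$ that does $L_2$-penetrate $A_i$, I would set $K_i=K\cap \Stab(A_i)$. If some $k\in K_i$ satisfies $d_X(x_i,k\cdot y_i)\le 2L_1+3$, alternative (2) of the lemma holds and we are done. Otherwise we construct a local-geodesic replacement $\tau_i$ in $\overline{X}$ for $\pi(\sigma_i)$, equal to $\pi(\sigma_i)$ above depth $L_1$ and remaining in the $2$-neighborhood of $\pi(\sigma_i)$ elsewhere, using the standard ``ascend--traverse--descend'' form of horoball geodesics from \cite{rhds} to choose $\tau_i$ inside the quotient horoball $\bar A_i=\pi(A_i)$.

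To verify that such a $\tau_i$ exists (and in particular that $\pi(\sigma_i)$ cannot be shortcut below depth $L_1$), I would run a quadrilateral argument: any shortcut descending below depth $L_1$ would be bounded by two vertical sides of length at most $L_1$ and a top edge identified by some $k\in K_i$, producing a kernel element with $d_X(x_i,k\cdot y_i)\le 2L_1+3$, contradicting our hypothesis. Hence the modification remains within depth $L_1$, up to a small error controlled by $\delta$. Concatenating $\bar\gamma'=\pi(\alpha_0)\tau_1\pi(\alpha_1)\cdots\tau_k\pi(\alpha_k)$ and checking local geodesicity at the transitions (which sit inside embedded balls, hence are locally isometric to the unfilled picture) yields the $10\delta$-local geodesic of alternative (1). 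The main obstacle is this quadrilateral argument with the precise constant $2L_1+3$: one must track the $\delta$-hyperbolic geometry of the filled horoball carefully so that any genuine shortening under the filling is detected as a short kernel element, and so that the replacement $\tau_i$ genuinely lies within a $2$-neighborhood of $\pi(\sigma_i)$ rather than some larger tube.
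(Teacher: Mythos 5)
The paper gives no proof of Lemma \ref{l:4.1} at all: it is imported as ``a reformulation of \cite[Lemma 4.1]{agm}'', and your sketch essentially reconstructs the argument of that source, so in substance you are following the intended route. Your outline is correct, but two points should be tightened. First, $\pi(\alpha_j)$ (and likewise the image of a horoball excursion of depth at most $L_2$) need not be a \emph{geodesic} of $\overline{X}$ --- its endpoints may even be identified in $\overline{G}$ --- what the embedded-ball property gives is that any subsegment of length at most $10\delta$ lies in an isometrically embedded ball centered at a Cayley-graph vertex, hence that these pieces are $10\delta$--local geodesics, which is all that conclusion (1) requires. Second, the ``quadrilateral'' step is cleaner, and only correct, as a lifting statement: $K_i=K\cap\Stab(A_i)$ acts freely on $A_i$, the image of $A_i$ in $\overline{X}$ is again a combinatorial horoball, and a regular (ascend--traverse--descend) geodesic there from $\pi(x_i)$ to $\pi(y_i)$ which stays at depth at most $L_1$ has length at most $2L_1+3$ and lifts to a path in $A_i$ from $x_i$ to $k\cdot y_i$ for some $k\in K_i$ --- this is exactly alternative (2) and is the only place the constant $2L_1+3$ comes from (note also that $k=1$ is allowed, which handles shallow $L_2$--penetration). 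If no such $k$ exists, the regular geodesic $\tau_i$ must descend below depth $L_1$, hence is vertical at all depths at most $L_1$, where it coincides with $\pi(\sigma_i)$ by uniqueness of vertical segments, and its at most three horizontal edges at maximal depth keep it in the $2$--neighborhood of $\pi(\sigma_i)$; together with convexity of horoballs in the cusped space (from \cite{rhds}) this gives local geodesicity of the concatenation away from the transition points, which you handle correctly via embedded balls. With those repairs your proposal is a sound proof.
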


The following result is very similar to \cite[Lemma 4.2]{agm} but for $H$--wide fillings rather than $H$--fillings.  The \emph{induced filling} is defined above in Definition \ref{def:inducedfilling}.
\begin{lemma}\label{l:4.2}
Suppose that $(G,\mc{P})$ is relatively hyperbolic and that $H \le G$ is relatively quasiconvex.  Let $R$ be the constant from Proposition \ref{prop:MMP A.6}.  For any $L_1 \ge 10\delta$, $L_2 > \max \{ 2L_1+3, R \}$, and all sufficiently long and $H$--wide fillings $\pi \co G \to \overline{G}$, the following holds: suppose that $K_H \le \ker(\pi) \cap H$ is the kernel of the induced filling of $H$, that $h \in H$ and that $\gamma$ is a geodesic from $1$ to $h$.  If conclusion (2) of Lemma \ref{l:4.1} holds then there exists $k \in K_H$ so that $d_X(1,kh) < d_X(1,h)$.
\end{lemma}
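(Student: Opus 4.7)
The plan is to take $k \in \ker(\pi)$ provided by conclusion~(2) of Lemma~\ref{l:4.1} and show that, for an appropriate finite $S$ and any sufficiently long $(H,S)$-wide filling, $k$ in fact lies in $K_H$. Granting this, the triangle inequality finishes: because $[x,y]$ $L_2$-penetrates $A$ we have $d_X(x,y)\ge 2L_2$, while $d_X(x,ky)\le 2L_1+3<2L_2$, and therefore
\[ d_X(1,kh)\le d_X(1,x)+d_X(x,ky)+d_X(y,h) < d_X(1,x)+d_X(x,y)+d_X(y,h)=d_X(1,h). \]

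\emph{Identifying the horoball and setting up peripheral coordinates.} Let $\gamma_Y$ be a geodesic in the cusped space $Y$ of $(H,\mc{D})$ from $1$ to $h$; relative quasiconvexity of $(H,\mc{D})$ makes $\check\iota(\gamma_Y)$ an $(\alpha,\beta)$-quasi-geodesic in $X$ and, by stability of quasi-geodesics, it fellow-travels $\gamma$ within a constant $M$ depending only on $\alpha,\beta,\lambda,\delta$. Since $L_2\gg M$, $\check\iota(\gamma_Y)$ also penetrates $A$ deeply; let $(sD,sd_1,0),(sD,sd_2,0)$ with $s\in H$, $D\in\mc{D}$, $d_i\in D$ be the entry and exit vertices of $\gamma_Y$ in the $Y$-horoball whose image lies in $A$. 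Then $\Stab(A)\cap H\supseteq sDs^{-1}$, and by maximality of unique parabolics both sides are equal, so almost malnormality pins $\Stab(A)$ down to $sc_DP_Dc_D^{-1}s^{-1}$ and $A$ is based at $sc_DP_D$. For sufficiently long fillings $\Stab(A)\cap\ker(\pi)=sc_DN_Dc_D^{-1}s^{-1}$, so we may write $k=sc_Dwc_D^{-1}s^{-1}$ with $w\in N_D$. The entry/exit images $sd_ic_D=sc_D\delta_i$ (with $\delta_i:=d_i^{c_D^{-1}}\in D^{c_D^{-1}}$) lie within $X$-distance $M$ of $x=sc_Dp_1$ and $y=sc_Dp_2$, so $\epsilon_i:=\delta_i^{-1}p_i\in P_D$ satisfies $|\epsilon_i|_X\le M$ and $p_i=\delta_i\epsilon_i$.

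\emph{Unwinding the $k$-relation.} Since $ky=sc_Dwp_2$, the hypothesis $d_X(x,ky)\le 2L_1+3$ gives $|u|_X\le 2L_1+3$ for $u:=p_1^{-1}wp_2\in P_D$, so $w=p_1up_2^{-1}=\delta_1\epsilon_1 u\epsilon_2^{-1}\delta_2^{-1}$. Conjugating by $\delta_2^{-1}\in P_D$ and using $N_D\lhd P_D$ yields $w':=\delta_2^{-1}w\delta_2\in N_D$ with the factorization $w'=d''v$, where $d'':=\delta_2^{-1}\delta_1\in D^{c_D^{-1}}$ and $v:=\epsilon_1u\epsilon_2^{-1}\in P_D$ satisfies $|v|_X\le 2M+2L_1+3=:M'$.

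\emph{Applying $(H,S)$-wideness and concluding.} Fix $S$ to be the (finite) set of nontrivial elements of $\bigcup\mc{P}$ of $X$-length at most $M'$. If $v=1$ then $w'=d''\in D^{c_D^{-1}}$; otherwise $v\in S\cap P_D$, and $(H,S)$-wideness applied to $d''v=w'\in N_D$ forces $v\in D^{c_D^{-1}}$. Either way $w'\in D^{c_D^{-1}}$, so $w=\delta_2w'\delta_2^{-1}\in D^{c_D^{-1}}$ and $c_Dwc_D^{-1}\in D\cap N_D^{c_D}$, whence $k=s(c_Dwc_D^{-1})s^{-1}\in K_H$, as required. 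The main obstacle is the fellow-traveling step, specifically quantifying the Cayley-graph correspondence between entry/exit points of $\gamma_Y$ in the appropriate $Y$-horoball and the points $x,y$ in $X$; this depends on uniform hyperbolicity of the cusped spaces across long fillings (Proposition~\ref{p:agm 2.3}) and the identification of the correct $Y$-horoball via almost malnormality of $\mc{D}$.
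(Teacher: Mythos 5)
Your overall route is the paper's: identify $k$ as a conjugate $sc_Dwc_D^{-1}s^{-1}$ with $w\in N_D$, factor $w$ (after conjugating by an element of $D^{c_D^{-1}}$) as an element of $D^{c_D^{-1}}$ times a short element of $P_D$, apply $(H,S)$--wideness to conclude $c_Dwc_D^{-1}\in D\cap N_D^{c_D}$ and hence $k\in K_H$, and finish with $d_X(x,ky)\le 2L_1+3<2L_2\le d_X(x,y)$. That algebraic core and the final estimate are correct and essentially identical to the paper's argument. The genuine gap --- which you flag yourself but do not close --- is the step producing points $sd_1c_D,\,sd_2c_D$ within a uniform distance $M$ of $x$ and $y$. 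You derive this from the claim that relative quasiconvexity of $(H,\mc{D})$ makes $\check\iota(\gamma_Y)$ an $(\alpha,\beta)$--quasigeodesic in $X$ fellow-traveling $\gamma$. Under the definition used here, relative quasiconvexity only says the image \emph{set} $\check\iota(Y^{(0)})$ is $\lambda$--quasiconvex; the map $\check\iota$ is merely Lipschitz and in general is \emph{not} a quasi-isometric embedding when some $D\in\mc{D}$ is distorted in $P_D^{c_D}$ (the paper says exactly this in Section \ref{s:Henry}; for instance, a distorted subgroup of a peripheral subgroup is relatively quasiconvex). So $\check\iota(\gamma_Y)$ need not be a quasigeodesic with constants depending only on $\lambda,\delta$, the constant $M$ --- on which your finite set $S$ depends --- is not justified, and your auxiliary assumption ``$L_2\gg M$'' is not available, since the hypothesis is only $L_2>\max\{2L_1+3,R\}$.

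The paper obtains the same two points with no fellow-traveling of a parametrized path: since $L_2\ge R$, Proposition \ref{prop:MMP A.6} shows $H\cap\Stab(A)$ is infinite, hence equals $D^{r}$ for some $r\in H$ and $D\in\mc{D}$, so $A$ is based on $rc_DP_D$; then $\lambda$--quasiconvexity of the image set, combined with the explicit form of the positive-depth vertices of $\check\iota(Y)$ lying in $A$ --- they are all of the form $(rc_DP_D,rdc_D,n)$ with $d\in D$ --- gives $d_1,d_2\in D$ with $d_X(x,rd_1c_D)$ and $d_X(y,rd_2c_D)$ bounded by a constant depending only on the quasiconvexity constant. (Note that your proposal never uses the hypothesis $L_2\ge R$; it is in the statement precisely so that Proposition \ref{prop:MMP A.6} applies, which also replaces your identification of the horoball via $\gamma_Y$ and almost malnormality.) With that substitution for your fellow-traveling step, the remainder of your argument goes through and coincides with the paper's proof.
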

\begin{proof} Let $\mc{D}$ be a collection of representatives of $H$--conjugacy classes of maximal uniquely parabolic subgroups of $H$, so that $(H,\mc{D})$ is relatively quasiconvex in $(G,\mc{P})$.

Let $\gamma$ be a geodesic as in the statement of the lemma, and suppose that conclusion (2) of Lemma \ref{l:4.1} holds.  Accordingly there is some horoball $A$ which is $L_2$--penetrated by $\gamma$.  Let $gP$ be the coset on which $A$ is based.
According to Proposition \ref{prop:MMP A.6}, $H \cap P^g$ is infinite.  This implies that there are $r \in H$, and $D \in \mc{D}$, so that $P = P_{D}$ and $gP = rc_DP_{D}$.  The intersection of $\gamma$ with $A$ is the segment $[x,y]$, and there is an element $k \in N_{D}^{rc_D}$ so that $d_X(x,k.y) \le 2L_1+3$.

Now, by quasiconvexity of $(H,\mc{D})$, there exists some $d_1,d_2 \in D$ so that $d_X(x,rd_1c_D), d_X(y,rd_2c_D)$ are both bounded by some constant $L$ depending only on the quasiconvexity constant for $(H,\mc{D})$.

Let $w_1 = (rd_1c_D)^{-1}ky$ and $w_2 = y^{-1}rd_2c_D$.  Both $d_X(1,w_1)$ and $d_X(1,w_2)$ are at most $L + 2L_1 + 3$, and both $w_1$ and $w_2$ lie in $P_D$.    Let $S$ be the set of words in the parabolic subgroups of $X$--length at most $2(L+2L_1+3)$.
Since $k \in N_{D}^{rc_D}$, we can find $n \in N_{D}$ so that $k = rc_Dnc_D^{-1}r^{-1}$.

We have
\begin{eqnarray*}
k &=& ky . y^{-1} \\
&=& rd_1c_D w_1w_2 c_D^{-1}d_2^{-1}r^{-1}.
\end{eqnarray*}
Therefore,
\[	c_Dnc_D^{-1} = d_1c_Dw_1w_2c_D^{-1}d_2^{-1}	,	\]
and 
\[ (d_2^{-1}c_D)n(d_2^{-1}c_D)^{-1} = (d_2^{-1}d_1)c_D w_1 w_2 c_D^{-1}.\]
However, for an $(H,S)$--wide filling, there can only be an element of $N_{D}$ of this form if $c_Dw_1w_2c_D^{-1} \in D$.  This implies that $c_Dnc_D^{-1} \in D$, which implies that $k \in D^r \cap N_{D}^{c_D} \le K_H$.

Since $d_X(x,ky) \le 2L_1+3$, but $d_X(x,y) \ge 2L_2 > 2L_1+3$, it is clear that $d_X(1,k.h) < d_X(1,h)$, as required.
\end{proof}

The following result is an immediate consequence.

\begin{corollary} \label{c:shortest}
Suppose that $(G,\mc{P})$ is relatively hyperbolic and that $H \le G$ is relatively quasiconvex.  For any $L \ge 10\delta$ and for all sufficiently long and $H$--wide fillings $\pi \co G \to \overline{G}$, if $h \in H$ is the shortest element of $H \cap \pi^{-1}(\pi(h))$ and $\gamma$ is a geodesic from $1$ to $h$ then there is a $10\delta$--local geodesic in $\overline{X}$ with the same endpoints as $\pi(\gamma)$ which lies in a $2$--neighborhood of $\pi(\gamma)$ and agrees with $\pi(\gamma)$ in an $L$--neighborhood of the Cayley graph of $\overline{G}$ in $\overline{X}$.
\end{corollary}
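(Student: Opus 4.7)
The plan is to read this as a direct contrapositive application of Lemma \ref{l:4.2}. The setup of Lemma \ref{l:4.1} has two mutually exclusive conclusions for a geodesic $\gamma$ in $X$, and the first conclusion is literally what we want. So it suffices to rule out conclusion (2) under the hypothesis that $h$ is shortest in its fiber $H\cap \pi^{-1}(\pi(h))$.

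More concretely, I would set $L_1 = L$, and then choose $L_2$ strictly larger than $\max\{2L_1+3,R\}$, where $R$ is the constant from Proposition \ref{prop:MMP A.6}. With these choices, for all sufficiently long and $H$--wide fillings both Lemma \ref{l:4.1} and Lemma \ref{l:4.2} apply. I would then apply Lemma \ref{l:4.1} to the geodesic $\gamma$ from $1$ to $h$. In the case that conclusion (1) of Lemma \ref{l:4.1} holds, the $10\delta$--local geodesic it provides has exactly the properties claimed in the corollary, and we are finished.

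In the case that conclusion (2) of Lemma \ref{l:4.1} holds, Lemma \ref{l:4.2} produces an element $k\in K_H$ (the kernel of the induced filling of $H$) with $d_X(1,kh) < d_X(1,h)$. But $k\in K_H\le H\cap \ker(\pi)$, so $kh\in H$ and $\pi(kh) = \pi(h)$, i.e., $kh$ lies in the same fiber $H\cap \pi^{-1}(\pi(h))$ as $h$ but is strictly shorter. This contradicts the assumption that $h$ was the shortest element of this fiber, so conclusion (2) cannot occur.

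There is really no serious obstacle: the only thing to verify carefully is that the constants chosen for Lemma \ref{l:4.1} and Lemma \ref{l:4.2} are mutually compatible, and that ``sufficiently long and $H$--wide'' for the corollary is interpreted as the intersection of the two corresponding finite sets of excluded elements from the peripheral groups. The substantive work has already been done in Lemma \ref{l:4.2}, where $H$--wideness was used to produce the element $k\in K_H$ rather than merely in $\ker(\pi)$.
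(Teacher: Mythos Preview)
Your proposal is correct and is exactly the argument the paper has in mind: the paper merely says the corollary is ``an immediate consequence'' of Lemma \ref{l:4.2}, and what you have written is the natural unpacking of that sentence, setting $L_1 = L$ and ruling out conclusion (2) of Lemma \ref{l:4.1} via the minimality of $h$.
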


Recall that in a $\delta$--hyperbolic space, $10\delta$--local geodesics are quite close to geodesics.  In particular, we have the following (see \cite[III.H.1.13]{bridhaef:book} for a more general and precise statement):
\begin{lemma}\label{l:local geodesic}
  Let $\gamma$ be a $10\delta$--local geodesic in a $\delta$--hyperbolic space.  Then $\gamma$ is a $(7/3,2\delta)$--quasigeodesic, and is Hausdorff distance at most $3\delta$ from any geodesic with the same endpoints.
\end{lemma}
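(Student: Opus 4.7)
The plan follows the standard two-step local-to-global argument for $\delta$-hyperbolic geometry: first establish the Hausdorff distance bound, then deduce the quasi-geodesic property by a subdivision argument. The claimed explicit constants match the specialization of \cite[III.H.1.13]{bridhaef:book} to $10\delta$-local geodesics.

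For the Hausdorff bound, I would argue as follows. Let $\gamma \co [a,b] \to X$ be the $10\delta$-local geodesic and $c$ any geodesic with the same endpoints. Let $p = \gamma(t_0)$ realize the maximum distance $D$ from $\gamma$ to $c$, and set $\tau = \min(5\delta,\, t_0 - a,\, b - t_0)$ and $p_i = \gamma(t_0 \pm \tau)$. The $10\delta$-local geodesic hypothesis forces $\gamma|_{[t_0 - \tau,\, t_0 + \tau]}$ to be a genuine geodesic of length $2\tau$, so $d(p_i, p) = \tau$. Let $q_i \in c$ be a closest point to $p_i$ (with $q_i = p_i$ when $p_i$ is an endpoint of $\gamma$). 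The geodesic quadrilateral $p_1 q_1 q_2 p_2$ is $2\delta$-slim, so $p \in [p_1, p_2]$ lies within $2\delta$ of one of its three other sides. If $p$ is close to $[q_1, q_2] \subseteq c$ then $D \leq 2\delta$. Otherwise, by symmetry, $p$ is within $2\delta$ of a point $z \in [p_1, q_1]$; since $q_1 \in c$ gives $d(p, q_1) \geq D$ and $d(p_1, q_1) \leq D$, the triangle inequality forces $d(p_1, z) \leq 2\delta$, hence $d(p_1, p) \leq 4\delta$, which forbids $\tau = 5\delta$. The boundary cases (where one $p_i$ coincides with an endpoint of $\gamma$, hence of $c$) reduce to a $\delta$-slim triangle and similarly force $D \leq 2\delta$. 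In all cases $D \leq 2\delta \leq 3\delta$.

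For the quasi-geodesic bound, any sub-segment $\gamma|_{[s, t]}$ is itself a $10\delta$-local geodesic. The upper bound $d(\gamma(s), \gamma(t)) \leq t - s$ is immediate since $\gamma$ is $1$-Lipschitz. For the lower bound, iterate: setting $k = 10\delta$, the point $\gamma(s+k)$ satisfies $d(\gamma(s), \gamma(s+k)) = k$ and $d(\gamma(s+k), q) \leq 3\delta$ for some $q \in c_{s,t} = [\gamma(s), \gamma(t)]$; two applications of the triangle inequality then yield $d(\gamma(s+k), \gamma(t)) \leq d(\gamma(s), \gamma(t)) - (k - 6\delta)$. Iterating $\lfloor (t - s)/k \rfloor$ times and absorbing the residual sub-interval produces a linear bound of the form $t - s \leq \lambda\, d(\gamma(s), \gamma(t)) + \epsilon$.

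The main obstacle is tuning the constants to the sharp values $(7/3, 2\delta)$: a naive iteration with step $10\delta$ yields the looser $(5/2, 10\delta)$. Sharpening requires either using the improved $2\delta$ Hausdorff bound (which the argument above in fact extracts) inside the iteration, or a separate treatment of the short regime $t - s \leq 10\delta$, where $\gamma$ is an honest geodesic and the bound is sharp with no additive error. The cleanest exposition is to invoke \cite[III.H.1.13]{bridhaef:book} directly for the quasi-geodesic part, having verified the Hausdorff bound by hand.
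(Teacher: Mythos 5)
The route you fall back on at the end is in fact the paper's entire proof: the lemma is stated with a pointer to \cite[III.H.1.13]{bridhaef:book}, and it is exactly the specialization of that theorem to $k=10\delta$ (there $\lambda=(k+4\delta)/(k-4\delta)=7/3$, $\epsilon=2\delta$, and the two neighbourhood statements combine to the $3\delta$ Hausdorff bound). So citing Bridson--Haefliger for the quasigeodesic constants is not a cop-out but the intended argument; as you yourself observe, your step-$10\delta$ iteration only yields weaker constants, so the citation is genuinely doing the work for the $(7/3,2\delta)$ claim.

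There is, however, a gap in the part you claim to verify by hand. Your argument (essentially Bridson--Haefliger's proof of their part (1), and the case analysis, including the endpoint cases, looks fine) establishes only the one-sided containment $\mathrm{im}(\gamma)\subseteq N_{2\delta}(c)$. Hausdorff distance requires the reverse containment $c\subseteq N_{3\delta}(\mathrm{im}(\gamma))$ as well, and that direction is precisely where the constant $3\delta$ in the statement comes from; the line ``in all cases $D\le 2\delta\le 3\delta$'' conflates the one-sided bound with the Hausdorff distance. The missing half needs its own (standard, short) argument: given $y$ on $c$, split $c$ at $y$ into $c_1\cup c_2$; the sets of parameters $t$ with $d(\gamma(t),c_1)\le 2\delta$, respectively $d(\gamma(t),c_2)\le 2\delta$, are closed, nonempty, and cover $[a,b]$, so by connectedness some $\gamma(t)$ is $2\delta$--close to points $u\in c_1$ and $v\in c_2$; since $y$ lies on the subsegment $[u,v]$ of $c$, slimness of the triangle with vertices $u,v,\gamma(t)$ gives $d(y,\gamma(t))\le 2\delta+\delta=3\delta$. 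With that paragraph added (or by simply citing all three parts of \cite[III.H.1.13]{bridhaef:book}, as the paper does), the proposal is complete.
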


Lemma \ref{l:4.2}, and its interpretation in the form of Corollary \ref{c:shortest} are the key results needed to generalize many results about $H$--fillings to sufficiently long and $H$--wide fillings, as we now explain.

\begin{proposition} [cf. Proposition 4.3, \cite{agm}] \label{p:pi(H) QC}
Let $(G,\mc{P})$ be relatively hyperbolic and suppose that $H$ is a relatively quasiconvex subgroup of $(G,\mc{P})$, with relative quasiconvexity constant $\lambda$.  There exists $\lambda' = \lambda'(\lambda,\delta)$ so that for all sufficiently long and $H$--wide fillings $\pi : G \to \overline{G}$ the subgroup $\pi(H)$ is $\lambda'$--relatively quasiconvex in $\overline{G}$.
\end{proposition}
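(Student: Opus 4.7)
The plan is to show that for sufficiently long and $H$--wide fillings, the collection $\overline{\mc{D}}=\{\overline{D} : D\in \mc{D}\}$ (with $\overline{D}$ the image of $D$ in $\pi(H)$) forms a valid peripheral structure on $\pi(H)$ in $\overline{G}$, and that the image in $\overline{X}$ of the associated cusped space $\overline{Y}$ is $\lambda'$--quasiconvex with $\lambda'=\lambda'(\lambda,\delta)$. The core idea is that shortest lifts in $H$ of elements of $\pi(H)$ have geodesics whose projections are $10\delta$--local geodesics in $\overline{X}$, by Corollary \ref{c:shortest}.

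First I would handle the peripheral bookkeeping. For sufficiently long fillings, Theorem \ref{t:large balls embed} (applied to a ball containing each $c_D$ and enough of $P_D$) ensures each $\overline{D}$ injects into $\overline{P}_D^{c_D}$ as an infinite subgroup, while Proposition \ref{p:GM-splittings 3.4} guarantees that $\overline{\mc{P}}$ remains almost malnormal, so the $\overline{D}$ are uniquely parabolic. This lets us identify $\overline{\mc{D}}$ as representatives of $\pi(H)$--conjugacy classes of maximal uniquely parabolic subgroups and set up the induced map $\check{\iota}_{\overline{H}}\co \overline{Y}\to \overline{X}$. Choosing the shortest coset representative for each $\overline{D}$ to be $\pi(c_D)$ whenever possible, a direct comparison of the defining formulas for $\check{\iota}$ and $\check{\iota}_{\overline{H}}$ shows that $\pi\circ \check{\iota}$ and $\check{\iota}_{\overline{H}}\circ \bar{\pi}$ (where $\bar{\pi}\co Y\to \overline{Y}$ is the map induced by the filling) agree on horoball vertices up to a uniform error $C=C(\delta)$ coming from the possible discrepancy between $\pi(c_D)$ and the chosen shortest representative for $\overline{D}$.

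Next I would prove the quasiconvexity estimate for geodesics between Cayley-graph vertices of $\pi(H)$. Given $\overline{h}_1,\overline{h}_2\in \pi(H)$, left-translate so that $\overline{h}_1=1$, pick $h\in H$ a shortest element with $\pi(h)=\overline{h}_2$, and let $\gamma$ be a geodesic in $X$ from $1$ to $h$. By $\lambda$--quasiconvexity, $\gamma$ lies in the $\lambda$--neighborhood of $\check{\iota}(Y)$. Corollary \ref{c:shortest} (with $L=10\delta$) shows that $\pi(\gamma)$ is a $10\delta$--local geodesic in $\overline{X}$ sharing endpoints with any geodesic $\overline{\gamma}$ from $1$ to $\overline{h}_2$, and Lemma \ref{l:local geodesic} gives $\overline{\gamma}\subset N_{3\delta}(\pi(\gamma))$. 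Since $\pi$ is $1$--Lipschitz, $\pi(\gamma)\subset N_\lambda(\pi(\check{\iota}(Y)))\subset N_{\lambda+C}(\check{\iota}_{\overline{H}}(\overline{Y}))$, and therefore $\overline{\gamma}\subset N_{\lambda+3\delta+C}(\check{\iota}_{\overline{H}}(\overline{Y}))$.

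Finally I would upgrade to geodesics between arbitrary vertices of $\check{\iota}_{\overline{H}}(\overline{Y})$ by the standard vertical-push argument in the combinatorial cusped space: a vertex at depth $n$ in a horoball of $\overline{Y}$ lies on a vertical ray from a nearby Cayley-graph vertex of $\pi(H)$, and a geodesic in $\overline{X}$ between two deep horoball vertices stays $O(\delta)$--close to a concatenation of two vertical segments (which themselves lie inside $\check{\iota}_{\overline{H}}(\overline{Y})$) together with a geodesic between shallower Cayley-graph vertices, which reduces to the previous case. The hard part of the argument, I expect, is verifying that the comparison constant $C$ depends only on $\delta$ and not on the specific filling; this is precisely where the uniform hyperbolicity from Proposition \ref{p:agm 2.3}, combined with the fact that shortest coset representatives stabilize under long enough fillings (via Theorem \ref{t:large balls embed}), is essential.
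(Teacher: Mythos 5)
Your core argument is the same as the paper's: take a shortest lift $h$ of $\overline{h}\in\pi(H)$, apply Corollary \ref{c:shortest} with $L=10\delta$, use Lemma \ref{l:local geodesic} to put any geodesic of $\overline{X}$ within $3\delta+2$ of $\pi(\gamma)$, and combine $\lambda$--quasiconvexity upstairs with the fact that $\pi$ is $1$--Lipschitz; the deep-horoball endpoints and the translation to the cusped space of $\pi(H)$ are exactly the parts the paper treats as routine by citing the proof of \cite[Proposition 4.3]{agm}, and your ``vertical push'' is the expected way to handle them. One claim in your peripheral bookkeeping is wrong as stated, however: Theorem \ref{t:large balls embed} does not ensure that each $\overline{D}$ ``injects into $\overline{P}_D^{c_D}$ as an infinite subgroup.'' For long fillings the induced kernel $N_D^{c_D}\cap D$ can be nontrivial --- indeed of finite index in $D$, as in the peripherally finite fillings used in Section \ref{s:Henry} --- so $\overline{D}\cong D/(N_D^{c_D}\cap D)$ may be finite, and injectivity of the induced filling of $H$ into $\overline{G}$ is Proposition \ref{p:induced filling}, which genuinely needs $H$--wideness rather than just long fillings; in particular the images of the $D\in\mc{D}$ need not literally form the canonical (maximal uniquely parabolic) peripheral structure on $\pi(H)$. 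Fortunately your main quasiconvexity estimate nowhere uses infiniteness of $\overline{D}$, so the fix is simply to drop that assertion and perform the identification of peripheral structures as in \cite[Proposition 4.3]{agm}, which is precisely the step the paper delegates there.
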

\begin{proof}
Recall that at the beginning of the section we fixed a constant $\delta$ so that the cusped space $X$ for $(G,\mc{P})$ is $\delta$--hyperbolic and that for sufficiently long fillings $\pi \co G \to \overline{G}$ the cusped space $\overline{X}$ for $(\overline{G},\overline{P})$ is also $\delta$--hyperbolic.  Suppose that $H$ is $\lambda$--relatively quasiconvex.

Let $\overline{h} \in \pi(H)$ and suppose that $h \in H$ is the shortest element of $H$ so that $\pi(h) = \overline{h}$.  Let $\gamma$ be a geodesic from $1$ to $h$ in $X$. By Corollary \ref{c:shortest} with $L = 10\delta$, for sufficiently long and  $H$--wide fillings
 there is a $10\delta$--local geodesic in $\overline{X}$ from $\overline{1}$ to $\overline{h}$ which lies in a $2$--neighborhood of $\pi(\gamma)$ and agrees with $\pi(\gamma)$ in a $10\delta$--neighborhood of the Cayley graph.

By Lemma \ref{l:local geodesic}, any geodesic from $\overline{1}$ to $\overline{h}$ is contained in an $(3\delta+2)$--neighborhood of $\pi(\gamma)$, and thus within a $(\lambda+3\delta+2)$--neighborhood of the image of the cusped space of $H$ in $\overline{X}$.  This suffices to prove the result, as in the proof of \cite[Proposition 4.3]{agm}.  (All that remains is to consider geodesics between points in the image of the cusped space of $H$ which do not lie at depth $0$, and it is straightforward to deal with these points given what has already been proved.)
\end{proof}

\begin{proposition} [cf. Proposition 4.4, \cite{agm}]\label{p:induced filling}
Let $H \le G$ be relatively quasiconvex.  For sufficiently long and $H$--wide fillings $\pi \co G \to \overline{G}$ the map from the induced filling of $H$ to $\overline{G}$ is injective.
\end{proposition}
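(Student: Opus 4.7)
The claim is equivalent to $H \cap K = K_H$, where $K = \ker(G \to \overline{G})$ and $K_H = \llangle \bigcup \mc{N}_H \rrangle_H$. The inclusion $K_H \subseteq H \cap K$ is immediate, so the task reduces to showing every $h \in H \cap K$ lies in $K_H$. The plan is to induct on $|h|_X := d_X(1,h)$; the base case $h = 1$ is trivial.

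For the inductive step, with $|h|_X > 0$, I will fix a geodesic $\gamma$ in $X$ from $1$ to $h$, choose $L_1 = 10\delta$ and $L_2 > \max\{2L_1 + 3, R\}$ where $R$ is the constant of Proposition \ref{prop:MMP A.6} applied to $H$, and apply Lemma \ref{l:4.1} (valid for sufficiently long fillings) and Lemma \ref{l:4.2} (valid for sufficiently long and $H$--wide fillings). In Case (2) of Lemma \ref{l:4.1}, Lemma \ref{l:4.2} produces $k \in K_H$ with $d_X(1,kh) < d_X(1,h)$. Since $k \in K_H \subseteq H \cap K$, the element $kh$ again lies in $H \cap K$ and is strictly shorter, so the inductive hypothesis gives $kh \in K_H$, whence $h = k^{-1}(kh) \in K_H$.

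The hard part will be ruling out Case (1) when Case (2) fails. Here the $10\delta$--local geodesic $\eta$ in $\overline{X}$ provided by Lemma \ref{l:4.1} runs from $\pi(1) = \overline{1}$ to $\pi(h) = \overline{1}$; by Lemma \ref{l:local geodesic} it is a $(7/3, 2\delta)$--quasi-geodesic, so $|\eta| \le 14\delta/3 < 10\delta$. Therefore $\eta$ is actually a geodesic between $\overline{1}$ and $\overline{1}$, hence constant. The agreement of $\eta$ with $\pi(\gamma)$ in the $L_1$--neighborhood of the Cayley graph then forces every vertex of $\gamma$ at depth at most $L_1$ to project to $\overline{1}$. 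Since $\pi$ preserves depth, only depth-$0$ vertices can project to $\overline{1}$; thus $\gamma$ cannot visit any vertex at depth in $[1,L_1]$. Because depth changes by at most one per edge, this forces $\gamma$ to stay at depth $0$, i.e., entirely in the Cayley graph, with every vertex lying in $K$.

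For sufficiently long fillings, Theorem \ref{t:large balls embed} ensures that $K$ contains no non-trivial element at $X$--distance $1$ from $1$; therefore the first vertex of $\gamma$ after $1$ cannot exist, so $\gamma$ is trivial and $h = 1$, contradicting $|h|_X > 0$. Consequently Case (2) must hold, and the induction closes. The main obstacle is the Case (1) analysis above, which requires carefully combining the triviality of $\eta$, the depth-preserving structure of the quotient $X \to \overline{X}$, and the Theorem \ref{t:large balls embed} injectivity of $\pi$ on small balls.
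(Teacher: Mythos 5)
Your proposal is correct and follows essentially the same route as the paper: reduce to showing $H\cap K=K_H$, induct on $d_X(1,h)$, and use the dichotomy of Lemma \ref{l:4.1} together with Lemma \ref{l:4.2} to shorten $h$ by an element of $K_H$ when case (2) holds. Your treatment of case (1) simply spells out in more detail what the paper dispatches in one line (the impossibility of a nontrivial $10\delta$--local geodesic loop), using the agreement with $\pi(\gamma)$ near the Cayley graph, depth preservation, and Theorem \ref{t:large balls embed}; this is a legitimate, slightly more explicit version of the same argument.
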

\begin{proof}
Let $X$ be the cusped space for $G$ and $\overline{X}$ the cusped space for $\overline{G}$.  Suppose that $h \in H \cap \ker(\pi)$ is nontrivial.  Let $K_H$ be the kernel of the induced filling on $H$.  We must show that $h \in K_H$.

Let $\gamma$ be a geodesic in $X$ from $1$ to $h$, and note that $\pi(\gamma)$ is a loop.  Suppose that condition (1) from Lemma \ref{l:4.1} holds.  Then there is a nontrivial $10\delta$--local geodesic loop based at $1$ in $\overline{X}$ agreeing with $\pi(\gamma)$ in a $10\delta$--neighborhood of $1 \in \overline{X}$.  This is impossible.

Therefore, Lemma \ref{l:4.2} applies, and there is an element $k \in K_H$ so that $d_X(1,kh) < d_X(1,h)$.  Induction on the length of $h$ shows that $h \in K_H$, as required.
\end{proof}

\begin{proposition} [cf. Proposition 4.5, \cite{agm}]\label{p:weak sep}
Let $H \le G$ be relatively quasiconvex and suppose that $g \in G \smallsetminus H$.  For sufficiently long and $H$--wide fillings $\pi \co G \to \overline{G}$ we have $\pi(g) \not\in \pi(H)$.
\end{proposition}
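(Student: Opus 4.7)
The plan is to argue by contradiction, reducing to an application of Theorem \ref{t:large balls embed}. Suppose $\pi(g) \in \pi(H)$, and choose $h \in H \cap \pi^{-1}(\pi(g))$ with $d_X(1,h)$ minimal. Setting $K := \ker(\pi)$, I will show that for sufficiently long and $H$--wide fillings both $g$ and $h$ lie in a common ball centered at $1$ in $X$ on which $\pi$ is an isometric embedding; this will force $g = h \in H$ and contradict the hypothesis $g \notin H$.

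Since $h$ is shortest in $H \cap \pi^{-1}(\pi(h))$, Corollary \ref{c:shortest} (with $L = 10\delta$) furnishes a $10\delta$--local geodesic $\bar\alpha$ in $\overline X$ from $\bar 1$ to $\pi(h)$ lying in the $2$--neighborhood of $\pi([1,h])$. By Lemma \ref{l:local geodesic}, $\bar\alpha$ is a $(7/3,2\delta)$--quasigeodesic contained in the $3\delta$--neighborhood of any geodesic with the same endpoints. Consequently, $\pi([1,h])$ lies within Hausdorff distance $3\delta + 2$ of every geodesic in $\overline X$ from $\bar 1$ to $\pi(g) = \pi(h)$.

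Set $R := d_X(1,g) + 3\delta + 3$, and choose the filling sufficiently long that, by Theorem \ref{t:large balls embed}, $\pi$ restricts to an isometric embedding on the ball $B_X(1, 2R)$. Then $d_X(1,k) > 2R$ for every nontrivial $k \in K$, so the $K$--translates of $B_X(1, R)$ are pairwise disjoint and their union is precisely $\pi^{-1}(B_{\overline X}(\bar 1, R))$. Since $[1,g] \subseteq B_X(1, R)$, its image $\pi([1,g])$ is a geodesic in $\overline X$ of length $d_X(1,g)$ from $\bar 1$ to $\pi(g)$, contained in $B_{\overline X}(\bar 1, d_X(1,g))$; combined with the fellow-travelling above, this forces $\pi([1,h]) \subseteq B_{\overline X}(\bar 1, R)$.

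Finally, $[1,h]$ is a connected path in $X$ starting at $1$ whose image lies in $B_{\overline X}(\bar 1, R)$, so by the disjointness of the preimage components it must remain in the component $B_X(1, R)$ containing $1$. Hence $h \in B_X(1, R)$; both $g$ and $h$ lie in this injectivity ball, and $\pi(g) = \pi(h)$ forces $g = h \in H$, contradicting $g \notin H$. The main obstacle is the lack of an a priori bound on $d_X(1,h)$ in terms of $d_X(1,g)$; the resolution is to pin down the shape of $\pi([1,h])$ in $\overline X$ via Corollary \ref{c:shortest}, and then exploit the connectedness-plus-disjointness argument afforded by Theorem \ref{t:large balls embed} to lift this control back to $X$.
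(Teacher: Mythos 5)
There is a genuine gap at the ``fellow-travelling'' step, and the rest of your argument depends on it.  Corollary \ref{c:shortest} gives a $10\delta$--local geodesic $\bar\alpha$ that lies in a $2$--neighborhood of $\pi([1,h])$; it does \emph{not} say that $\pi([1,h])$ lies in a bounded neighborhood of $\bar\alpha$.  Combined with Lemma \ref{l:local geodesic} this yields only the containment that the paper itself uses (every geodesic from $\overline{1}$ to $\pi(h)$ lies in the $(3\delta+2)$--neighborhood of $\pi([1,h])$), which is the opposite of the direction you need.  The reverse containment genuinely fails: the geodesic $[1,h]$ may make an arbitrarily deep excursion into a horoball whose entry and exit points are shortened in the quotient enough that geodesics of $\overline{X}$ cut the excursion off at much shallower depth, yet not shortened enough (i.e.\ not within $2L_1+3$) to trigger conclusion (2) of Lemma \ref{l:4.1}; in that situation the deep part of $\pi([1,h])$ is arbitrarily far from any geodesic joining $\overline{1}$ to $\pi(h)$.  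Since there is no a priori bound on $d_X(1,h)$, your conclusion $\pi([1,h])\subseteq B_{\overline X}(\overline 1,R)$ with $R=d_X(1,g)+3\delta+3$ is unjustified, and the subsequent lifting argument (which is what bounds $d_X(1,h)$ and places $h$ in the injectivity ball) collapses.  A secondary, fixable point: disjointness of the $K$--translates of $B_X(1,R)$ does not by itself prevent a path from crossing an edge from one translate to an adjacent one, so even granting the containment you would want the embedded ball to have radius at least $2R+2$.

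The paper's proof avoids needing any control of the deep parts of $\pi([1,h])$: it applies Lemma \ref{l:4.1} and Lemma \ref{l:4.2} with the large parameter $L_1=3d_X(1,g)+10\delta$, so that the local geodesic agrees with $\pi(\gamma)$ on the whole $L_1$--neighborhood of the Cayley graph, and then splits into cases.  If $\gamma$ stays in that neighborhood, the local geodesic \emph{is} $\pi(\gamma)$, so the $(7/3,2\delta)$--quasigeodesic bound gives $d_X(1,h)<L_1$ and injectivity of $\pi$ on the $L_1$--ball forces $g=h$; if $\gamma$ leaves it, the local geodesic has length at least $L_1$, contradicting the same quasigeodesic bound.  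If you want to salvage your approach, you would need to replace the Hausdorff-distance claim by an argument of this kind (choosing $L$ in Corollary \ref{c:shortest} proportional to $d_X(1,g)$ and exploiting the length bound on the local geodesic), rather than trying to confine all of $\pi([1,h])$ to a metric ball.
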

\begin{proof}
Choose $L_1 = 3d_X(1,g) + 10\delta$ and any $L_2 > \max \{ 2L_1 + 3, R \}$, and suppose that $\pi$ is sufficiently long and $H$--wide that Lemmas \ref{l:4.1} and \ref{l:4.2} hold for $\pi$, and also so that $\pi$ induces a bijection between the ball of radius $L_1$ about $1$ in $X$ and the ball of radius $L_1$ about the image of $1$ in the cusped space of $\pi(G)$.

In order to obtain a contradiction, suppose that $\pi(g) \in \pi(H)$, and choose $h\in H\cap\pi^{-1}(\pi(g))$ with $d_X(1,h)$ minimal.  Let $\gamma$ be a geodesic from $1$ to $h$ in $X$, and let $\sigma$ be a geodesic from $1$ to $g$ in $X$.  Note that $\pi(\sigma)$ is a geodesic.

The minimality of $h$ and Lemma \ref{l:4.2} ensure that condition (1) from Lemma \ref{l:4.1} holds for $\gamma$. 

There are now two cases, depending on whether $\pi(\gamma)$ (equivalently $\gamma$) leaves the $L_1$--neighborhood of the Cayley graph.  
If $\gamma$ lies in the $L_1$--neighborhood of the Cayley graph, it is a $10\delta$--local geodesic joining $\overline{1}$ to $\overline{g}$.  Its length is therefore at most $\frac{7}{3}d_X(1,g)+2\delta<L_1$, by Lemma \ref{l:local geodesic}.  But since $\pi$ is injective on the $L_1$--ball about $1$, this implies $g=h$, a contradiction.

The second case is that $\pi(\gamma)$ leaves the $L_1$--neighborhood of the Cayley graph, in which case there is a $10\delta$--local geodesic as in Lemma \ref{l:4.1}, joining $\overline{1}$ to $\overline{g}$, which coincides with $\pi(\gamma)$ in the $L_1$--neighborhood of the Cayley graph, but may differ elsewhere.  The length of this $10\delta$--local geodesic is at least $L_1>\frac{7}{3}d_X(1,g)+2\delta$, again contradicting Lemma \ref{l:local geodesic}.
\end{proof}

We finish this section with an example which exhibits the necessity of restricting to $H$--wide fillings (and not just sufficiently long fillings) in Propositions \ref{p:pi(H) QC}, \ref{p:induced filling} and \ref{p:weak sep}.

\begin{example} \label{ex:long but not wide}
  Let $\Sigma$ be a genus $2$ surface, with $\pi_1\Sigma = F = \langle a,b,c,d\mid abcd(dcba)^{-1} \rangle$.  Let $\phi$ be an automorphism of $F$ induced by a pseudo-Anosov homeomorphism of $\Sigma$, so that the mapping torus $M_\phi$ has fundamental group 
  \[ G = \langle F,t\mid txt^{-1}=\phi(x),\mbox{ for }x\in F\rangle.\]
  By Thurston's geometrization of fiber bundles \cite{OtalSMF}, $M_\phi$ is a hyperbolic $3$--manifold; in particular $G$ is a hyperbolic group.
Now extend the centralizer of $a$ (attaching a torus to $M_\phi$ by gluing its longitude to a loop representing $a$) to get $\Gamma$:
\[ \Gamma = \langle G, e\mid [e,a] \rangle.\]
  Letting $P = \langle e,a\mid [e,a]\rangle$ we have a relatively hyperbolic pair $\left(\Gamma,\{ P \} \right)$, by \cite[Theorem 0.1.(2)]{Dahmani03}.

  Let $H = \langle b,c,d ,e \rangle < \Gamma$.  Then $H$ is a free group on the given generators.  This can be seen from the induced action of $H$ on the Bass--Serre tree of the defining graph of groups for $\Gamma$, which exhibits $H$ as the free product of the free group $\langle b,c,d \rangle < G$ and the infinite cyclic group $\langle e \rangle$.  The subgroup  $\langle b,c,d \rangle$ is quasiconvex in $G$, by a result of Scott and Swarup \cite{ScottSwarup90}.  It then follows from the argument in the proof of \cite[Proposition 4.6]{Dahmani03} that $H$ is relatively quasiconvex in $(\Gamma, \{ P \})$.

For an integer $i > 0$, let $N_i = \langle e^ia^{-1} \rangle \unlhd P$.  Taking the sequence of fillings
\[	\pi_i \co \Gamma \to \Gamma_i = \Gamma / \llangle N_i \rrangle	,	\]
gives a cofinal sequence of longer and longer Dehn fillings.  For each $i > 0$ the group $\Gamma_i$ has the following graph of groups decomposition:
\[	\Gamma_i = G \ast_{a = e^i} \langle e \rangle	.	\]
In particular, $G$ embeds in $\Gamma_i$ (as a quasiconvex subgroup).

The image $H_i := \pi_i(H)$ of $H$ in $\Gamma_i$ is an amalgam of $F$ with an infinite cyclic subgroup over a maximal cyclic subgroup of $F$.  Since $F$ is distorted in $G$, the subgroup $H_i$ is distorted in $\Gamma_i$, and hence is not quasiconvex.

Moreover, $a \not\in H$ but $\pi_i(a) \in H_i$ for all $i > 0$.  Finally, we have $N_i \cap H  = \{ 1 \}$, so the induced filling of $H$ is the trivial filling.  On the other hand, $H_i$ is not a free group, so the map from the induced filling of $H$ to $\Gamma_i$ is not injective for any $i > 0$.  Explicitly, the element $(e^i)bcd\left( dcb(e^i) \right)^{-1}$ is in the kernel of the map from the induced filling of $H$ to $\Gamma_i$.
\end{example}

\section{Existence of $H$--wide fillings}\label{s:H-wide exists}

In this section, we prove Lemma \ref{l:PF wide fillings} which implies that in our applications in Sections \ref{s:VF} and \ref{s:Henry} we can find sufficiently long and $H$--wide fillings.  The key observation is that separability  allows us to do this.

\begin{lemma} \label{l:simple wide}
Suppose that $P$ is a group and that $B$ is a separable subgroup.  For any finite set $S$ there exists a finite-index normal subgroup $K_S \le P$ so that for any $N \unlhd P$ with $N \le K_S$, the subgroup $N$ is $(B,S)$--wide in $P$.
\end{lemma}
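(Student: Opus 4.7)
The plan is to reformulate the wideness condition as a statement about cosets, then exploit separability to find a finite-index subgroup containing $B$ and avoiding the finitely many ``bad'' elements of $S$, and finally pass to its normal core.

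First I would observe that since $B$ is itself a subgroup, the condition that $N$ is $(B,S)$--wide is equivalent to the assertion that for every $s\in S\smallsetminus B$ the double coset $B\cdot N$ (equivalently, the union $\bigcup_{b\in B} bN$) misses $s$; that is, $s\notin BN$. So if we can find a finite-index normal subgroup $K_S\unlhd P$ with the property that $BK_S\cap (S\smallsetminus B)=\emptyset$, then any $N\unlhd P$ with $N\le K_S$ satisfies $BN\subseteq BK_S$ and is automatically $(B,S)$--wide.

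Next, I would use separability of $B$. Since $S$ is finite, the set $S\smallsetminus B$ is finite. For each $s\in S\smallsetminus B$, separability of $B$ yields a finite-index subgroup $H_s\le P$ with $B\le H_s$ and $s\notin H_s$. Let
\[ H=\bigcap_{s\in S\smallsetminus B} H_s. \]
This is a finite-index subgroup of $P$ containing $B$, and by construction $H\cap (S\smallsetminus B)=\emptyset$.

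Finally, I would take $K_S$ to be the normal core of $H$ in $P$, that is, $K_S=\bigcap_{p\in P}pHp^{-1}$. This is normal and finite-index (it is the kernel of the $P$-action on $P/H$). Since $K_S\le H$ and $B\le H$, we have $BK_S\le H$, which still avoids $S\smallsetminus B$. By the reformulation in the first paragraph, any normal $N\le K_S$ is $(B,S)$--wide, completing the proof.

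There is no real obstacle here; the only subtle point is producing a \emph{normal} finite-index subgroup (rather than just a finite-index subgroup) with the desired avoidance property, which is handled cleanly by passing to the normal core. Separability of $B$ does all the work.
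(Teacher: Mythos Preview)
Your proof is correct and is essentially the same as the paper's: both use separability to find, for each $s\in S\smallsetminus B$, a finite-index subgroup containing $B$ and missing $s$, intersect these, and then pass to the normal core. Your reformulation of $(B,S)$--wideness as the condition $BN\cap(S\smallsetminus B)=\emptyset$ is a mild repackaging of the same verification the paper performs directly.
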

\begin{proof}
  For each $s\in S\smallsetminus B$, choose some $P_s \le P$ finite index and satisfying $B<P_s$ and $s\not\in P_s$.  Let $K_S=\bigcap\{P_s \mid s\in S\smallsetminus B\}$, and note that $K_S$ is finite index in $P$, and contains $B$.

  Suppose $N\lhd P$ is contained in $K_S$.  We verify that $N$ is $(B,S)$--wide.  Let $b\in B$ and $s\in S$, and suppose $bs\in N$.  If $s\in B$ there is nothing to show, so suppose $s\not\in B$.  The element $s$ is not contained in $K_S$, but $bs\in N<K_S$, so we must have $b\not\in K_S$.  But this contradicts $B\le K_S$.

  The subgroup $K_S$ just constructed may not be normal, but we may replace $K_S$ by the intersection of its conjugates without disturbing the conclusion.
\end{proof}

In Lemma \ref{l:PF wide fillings} we consider a finite collection $\left\{ (H_1,\mc{D}_1) , \ldots , (H_k,\mc{D}_k) \right\}$ of relatively quasiconvex subgroups of a relatively hyperbolic pair $(G,\mc{P})$.  For each $i$ and each $D \in \mc{D}_i$ we assume that we have fixed $P_D \in \mc{P}$ and $c_D \in G$ so that $D \le P_D^{c_D}$, and that $c_D$ is a shortest such conjugating element.

\begin{lemma} \label{l:PF wide fillings}
Suppose that $(G,\mc{P})$ is relatively hyperbolic, and that $\mc{H}=\{(H_1,\mc{D}_1),\ldots,(H_k,\mc{D}_k)\}$ is a collection of relatively quasiconvex subgroups.  
Suppose that for each $1 \le j \le k$ and for each $D \in \mc{D}_j$ the subgroup $D$ is separable in $P_D^{c_D}$.

Then for any finite $S \subset \bigcup \mc{P} \smallsetminus \{ 1 \}$ there exist finite index subgroups $\{ K_P \unlhd P\mid P\in \mc{P} \}$ so that any filling
\[	G \to G(\mc{N})\mbox{, with }\mc{N} = \{N_P \le K_P\mid P\in \mc{P}\}	\]
is $(H_j,S)$--wide for each $1\le j\le k$.
\end{lemma}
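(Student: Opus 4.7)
The plan is to reduce to Lemma \ref{l:simple wide} one conjugacy class at a time and then take a finite intersection. Since $(H,S)$--wideness is a condition indexed by pairs $(j,D)$ with $D \in \mc{D}_j$, and the condition for each such pair only constrains the filling kernel $N_{P_D}$ (as a subgroup of $P_D$), we can handle each pair independently and then intersect the resulting finite-index subgroups inside each $P \in \mc{P}$.

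First, fix a pair $(j,D)$ with $D \in \mc{D}_j$. Conjugation by $c_D^{-1}$ is a group isomorphism from $P_D^{c_D}$ to $P_D$, so separability of $D$ in $P_D^{c_D}$ implies that $B_{j,D} := D^{c_D^{-1}}$ is separable in $P_D$. I would then apply Lemma \ref{l:simple wide} with group $P_D$, separable subgroup $B_{j,D}$, and finite set $S \cap P_D$, obtaining a finite-index normal subgroup $K_{j,D} \unlhd P_D$ such that any $N \unlhd P_D$ contained in $K_{j,D}$ is $(B_{j,D}, S \cap P_D)$--wide in $P_D$.

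Next, for each $P \in \mc{P}$, define
\[ K_P \;=\; \bigcap \{ K_{j,D} \mid 1 \le j \le k,\ D \in \mc{D}_j,\ P_D = P \}. \]
Because relatively quasiconvex subgroups of finitely generated relatively hyperbolic groups have only finitely many conjugacy classes of maximal parabolic subgroups, each $\mc{D}_j$ is finite, so this is a finite intersection of finite-index normal subgroups of $P$, and hence is itself finite-index and normal in $P$. (For $P$ not of the form $P_D$ for any relevant pair, set $K_P = P$.)

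Finally, suppose $\mc{N} = \{N_P \le K_P\}_{P\in\mc{P}}$ is any collection of filling kernels. For each $j$ and each $D \in \mc{D}_j$, the kernel $N_D = N_{P_D}$ satisfies $N_D \le K_{P_D} \le K_{j,D}$, so by the defining property of $K_{j,D}$ it is $(D^{c_D^{-1}}, S \cap P_D)$--wide in $P_D$. This is precisely the condition that the filling $G \to G(\mc{N})$ is $(H_j, S)$--wide, which holds for every $j$ simultaneously. I do not anticipate a serious obstacle: the only subtle point is ensuring that each $\mc{D}_j$ is finite, which is standard for relatively quasiconvex subgroups and is tacitly used elsewhere in the paper.
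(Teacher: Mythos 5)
Your argument is correct and is essentially the paper's own proof: apply Lemma \ref{l:simple wide} once for each pair $(j,D)$ (the paper phrases the wideness in the conjugate $P_D^{c_D}$, you conjugate $D$ back into $P_D$, which is the same thing), then intersect the resulting finite-index normal subgroups over all $D$ with $P_D = P$ to define $K_P$, taking $K_P = P$ for peripherals that do not occur. Your explicit remark that each $\mc{D}_j$ is finite (so the intersection stays finite index) is the only addition; the paper uses this tacitly.
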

\begin{proof}
Fix $S\subset \cup \mc{P} \smallsetminus \{ 1 \}$ a finite set.  

Fix $P\in \mc{P}$ and let $S_P = P\cap S$.
Suppose, for some $i$, that $D \in \mc{D}_i$
is so that $P = P_D$.  By Lemma \ref{l:simple wide} there is a finite-index normal subgroup $K_D \unlhd P$ so that any $N \unlhd P^{c_D}$ for which $N \le K_D^{c_D}$ is $(D,S_P)$--wide in $P^{c_D}$.

We choose $K_P$ to be the intersection of all $K_D$ for which $P = P_D$.  If we now choose $N_P \le K_P$ then the conclusion of the lemma holds.  This completes the proof.
\end{proof}

\section{Application to virtual specialness and virtual fibering}\label{s:VF}

In this section, we explain how the ideas and results in the beginning of the paper, together with a recent result of Cooper and Futer \cite{CooperFuter} give a proof of Theorem \ref{t:VS} independent of \cite{Wise}.

The following consequence of Theorem \ref{t:VS} was reproved (without using the results of \cite{Wise}) by Cooper and Futer.

\begin{theorem} \cite[Corollary 1.3]{CooperFuter} \label{t:CF}
 Suppose that $G$ is the fundamental group of a non-compact finite-volume hyperbolic $3$--manifold.  Then $G$ acts freely and cocompactly on a CAT$(0)$ cube complex dual to finitely many immersed quasi-Fuchsian surfaces.
\end{theorem}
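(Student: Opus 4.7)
The plan is to follow a Bergeron--Wise style cubulation adapted to the cusped (relatively hyperbolic) setting. The input required is a finite collection $\mc{S} = \{S_1, \ldots, S_n\}$ of immersed quasi-Fuchsian surfaces in $M = \mathbb{H}^3/G$ whose associated codimension-one surface subgroups separate any pair of distinct points of $\partial \mathbb{H}^3$. Given such a collection, Sageev's construction produces a CAT$(0)$ cube complex $C$ with a $G$--action, and the Hruska--Wise relative cubulation criterion (the analog of Bergeron--Wise with $\mc{P}$ the cusp subgroups) ensures this action is proper. Freeness is then automatic because $G$ is torsion-free.

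The first main step is constructing $\mc{S}$. For closed hyperbolic $3$--manifolds this is the Kahn--Markovic theorem, which produces arbitrarily many nearly totally-geodesic immersed closed surfaces. For cusped $M$, the existence of essentially immersed quasi-Fuchsian surfaces goes back to Cooper--Long--Reid, but cubulation requires not just existence but \emph{enough} surfaces: the codimension-one subgroup structure must be rich enough to separate any pair of distinct geodesics in $\mathbb{H}^3$. This can be done by refining the Kahn--Markovic pants-assembly argument inside a compact core of $M$, producing surfaces passing near any preassigned geodesic segment; only finitely many $G$--orbits are needed by a compactness argument on the space of pairs of points at infinity modulo $G$.

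The main obstacle is cocompactness of the $G$--action on $C$. In the closed case cocompactness is essentially automatic, since the walls carry proper cocompact actions of their stabilizers and there are only finitely many hyperplane orbits. In the cusped setting, however, the hyperplanes could accumulate inside cusps: within a single horoball, the lifts of surfaces in $\mc{S}$ trace a pattern of Euclidean geodesics, and if these are uncontrolled there could be infinitely many $G$--orbits of hyperplanes or the dimension of $C$ could blow up. The key technical point is to arrange $\mc{S}$ so that the intersection of lifts with each maximal horoball is a \emph{finite} union of parallel families of Euclidean geodesics. Equivalently, each maximal parabolic $P \cong \mathbb{Z}^2$ should be cubulated by a finite family of codimension-one subgroups coming from the boundary slopes of the $S_i$; this is arranged by carefully selecting those slopes during the construction of the surfaces.

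Once $\mc{S}$ both separates all pairs of geodesics in $\mathbb{H}^3$ and induces a finite cubulation on each peripheral $\mathbb{Z}^2$, Sageev's construction together with the relative cubulation criterion of Hruska--Wise yields a free, proper, cocompact action of $G$ on the CAT$(0)$ cube complex $C$ dual to $\mc{S}$. The hard part throughout is not the Sageev machine but the geometric construction of a finite family of surfaces that simultaneously separates everywhere in the thick part and behaves tamely in every cusp.
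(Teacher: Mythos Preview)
The paper does not prove this theorem at all: it is quoted verbatim as \cite[Corollary 1.3]{CooperFuter} and used as a black box input to the proof of Theorem~\ref{t:VS}. There is therefore no ``paper's own proof'' to compare your proposal against; the authors explicitly attribute the result to Cooper and Futer and make no attempt to reprove it.

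That said, your sketch is a reasonable high-level outline of the strategy in \cite{CooperFuter}, and you have correctly identified the two main issues: producing a sufficiently rich family of quasi-Fuchsian surfaces (the ubiquity result, which in the cusped case is substantially harder than a direct Kahn--Markovic adaptation and is the main content of \cite{CooperFuter}), and controlling the cubulation in the cusps so that the Hruska--Wise cocompactness criterion applies. However, your description of the first step as ``refining the Kahn--Markovic pants-assembly argument inside a compact core'' significantly understates the difficulty: the surfaces in \cite{CooperFuter} are not built by a straightforward Kahn--Markovic construction restricted to the thick part, and the control of boundary slopes in the cusps is delicate and interacts with the gluing. What you have written is a plausible summary of the architecture, not a proof; the actual argument occupies the bulk of \cite{CooperFuter}. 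For the purposes of this paper, though, no proof is needed---the theorem is simply cited.
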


In this section, our main result is that this cubulation is virtually special.

\begin{reptheorem}{t:VS}
Suppose that $G$ is the fundamental group of a non-compact finite-volume hyperbolic $3$--manifold $M$.  Then $G$ is virtually compact special.
\end{reptheorem}

If $\mc{P}$ is a collection of conjugacy-representatives of maximal parabolic subgroups of $G$ then $(G,\mc{P})$ is relatively hyperbolic.  After possibly replacing $M$ by an orientable double-cover of $M$, each element of $\mc{P}$ is free abelian of rank $2$.
As explained in the proof of Theorem \ref{t:VS} below, proving Theorem \ref{t:VS} reduces to establishing separability of certain double cosets of relatively quasiconvex subgroups of $G$. In the closed case, such double cosets are separable by results in \cite{VH} and \cite{minasyan:subsetgferf}.  We reduce to this case by performing orbifold Dehn filling on $M$ and applying the following `weak separability' criterion for double cosets.

\begin{proposition}\label{p:DC WS}
Suppose that $(G,\mc{P})$ is relatively hyperbolic and that each element of $\mc{P}$ is free abelian.  Suppose further that $\mc{H}$ is a finite collection of relatively quasiconvex subgroups of $(G,\mc{P})$ and that $S \subseteq (\bigcup \mc{P}) \smallsetminus \{ 1 \}$ and $F\subseteq G$ are finite subsets.

There exist finite-index subgroups $\{ K_P \unlhd P \mid P \in \mc{P} \}$ so that for any subgroups $N_P \le K_P$ the filling
\[	G \to G/K := G\left( \left\{ N_P  \mid P \in \mc{P} \right\} \right)	,\]
is $(H,S)$--wide for each $H \in \mc{H}$ and furthermore whenever $f \in F$, $\Psi,\Theta\in \mc{H}$ satisfy $1 \not\in \Psi\Theta f$, there is no element of $K$ in $\Psi \Theta f$.
\end{proposition}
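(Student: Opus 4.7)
The plan is to produce $\{K_P\}$ as the intersection of two families of finite-index normal subgroups: one securing $(H,S)$--wideness for each $H \in \mathcal{H}$, and one securing the double coset avoidance for each of the (finitely many) triples $(\Psi,\Theta,f) \in \mathcal{H}\times\mathcal{H}\times F$ with $1 \notin \Psi\Theta f$. The first family comes directly from Lemma \ref{l:PF wide fillings}: its separability hypothesis holds because each $P \in \mathcal{P}$ is finitely generated free abelian (hence LERF), so every peripheral subgroup $D$ of any $H \in \mathcal{H}$ is separable in its conjugate $P_D^{c_D}$.

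For the double coset avoidance, observe that $K \cap \Psi\Theta f = \emptyset$ is equivalent to $\pi(f^{-1}) \notin \pi(\Psi)\pi(\Theta)$ in $\overline{G}$. Fix a triple with $f^{-1} \notin \Psi\Theta$ and suppose toward contradiction that $k := \psi\theta f \in K$ with $\psi \in \Psi$, $\theta \in \Theta$. Among all such decompositions (over all $k \in K \cap \Psi\Theta f$), choose one minimizing $|\psi\theta|$ in the cusped space $X$, and consider the geodesic $[1,\psi\theta]$. I will apply Lemma \ref{l:4.1} to this geodesic. If conclusion (1) holds, Lemma \ref{l:local geodesic} bounds $|\psi\theta| \le \tfrac{7}{3}|f|+2\delta$; taking the filling kernels small enough (via Theorem \ref{t:large balls embed}) to make $\pi$ injective on the ball of this radius then forces $\psi\theta = f^{-1}$, contradicting $f^{-1} \notin \Psi\Theta$. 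Otherwise conclusion (2) must hold: a horoball $A$ is deeply penetrated by $[1,\psi\theta]$, with some $k_0 \in K \cap \Stab(A)$ shortening across it.

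By thinness of the geodesic triangle $\{1,\psi,\psi\theta\}$ in $X$, the deep penetration of $A$ lies within a $2\delta$--neighborhood of one of the two sides $[1,\psi]$ or $[\psi,\psi\theta]$. If close to $[1,\psi]$, Proposition \ref{prop:MMP A.6} applied to $\Psi$ forces $\Stab(A) \cap \Psi$ to be infinite, so $A$ is based on a peripheral of $\Psi$; the $(\Psi,S)$--wideness together with Lemma \ref{l:4.2} applied to $[1,\psi]$ then produces an element $k_1 \in K \cap \Psi$ with $|k_1\psi| < |\psi|$. The replacement $\psi \mapsto k_1\psi$ yields a valid decomposition $(k_1\psi)\theta f$ of an element of $K$, and the quadrilateral geometry (the same horoball $A$ shortcuts both $[1,\psi]$ and $[1,\psi\theta]$) shows that $|(k_1\psi)\theta| < |\psi\theta|$, contradicting minimality. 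The symmetric argument using $(\Theta,S)$--wideness handles penetrations close to $[\psi,\psi\theta]$.

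The hard part will be verifying that the shortening in the previous paragraph decreases $|\psi\theta|$, not merely $|\psi|$. The heuristic is that the deep penetration of $A$ by $[1,\psi\theta]$ is simultaneously a deep penetration by $[1,\psi]$ (by $\delta$--thinness near $A$), so the element $k_1 \in K \cap \Psi \cap \Stab(A)^{c}$ (for appropriate conjugator) shortcuts the long excursion into $A$ which dominates both sides. The precise bookkeeping, analogous to but more elaborate than the proof of Lemma \ref{l:4.2}, requires careful tracking of the locations where the shortening element acts, and is the technical core of the argument.
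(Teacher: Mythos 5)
Your overall strategy (minimal counterexample, Lemma \ref{l:4.1}, horoball analysis, wideness) is the paper's, but there is a genuine gap at exactly the point you defer, and also one step earlier. The dichotomy ``the deep penetration of $A$ lies within $2\delta$ of $[1,\psi]$ or of $[\psi,\psi\theta]$'' is not a true dichotomy for the whole excursion: for the horoball at the ``corner'' near $\psi$, the entrance of the excursion can be close to $\check{\iota}_\Psi(X_\Psi)$ while the exit is close to $\psi\,\check{\iota}_\Theta(X_\Theta)$. In that mixed case Lemma \ref{l:4.2} does not apply to either side (its hypothesis is a geodesic with both endpoints in a single relatively quasiconvex subgroup), and $(\Psi,S)$-- and $(\Theta,S)$--wideness are simply not enough: the element $k\in K\cap\Stab(A)$ relates a point of $\Psi$'s parabolic to a point of $\psi\Theta$'s parabolic. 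The paper's proof handles this by imposing an \emph{additional} wideness condition, via Lemma \ref{l:simple wide}, with respect to the finitely many subgroups $P_{B_1,B_2}=\langle P\cap\Psi^{g_1},\,P\cap\Theta^{g_2}\rangle$, and the computation that the shortened element still lies in $\Psi\Theta f$ uses that $P$ is abelian to commute the $B_1$-- and $B_2$--pieces. Your proposal uses the abelian hypothesis only through LERF (for Lemma \ref{l:PF wide fillings}) and never in the double-coset half; that is a sign the mixed case cannot be closed with the tools you list. Relatedly, the wideness needed here is with respect to enlarged finite sets $S_P$ of all parabolic elements up to a length depending on $\delta$, the quasiconvexity constant, $\max_{f\in F} d_X(1,f)$ and $L_1$ --- not the arbitrary $S$ handed to you in the statement.

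Second, the step you label ``the technical core'' (that the shortening decreases $|\psi\theta|$ rather than $|\psi|$) is not bookkeeping you can expect to push through along your route, because the element $k_1$ you extract from Lemma \ref{l:4.2} applied to $[1,\psi]$ has no a priori relation to the excursion of $[1,\psi\theta]$. The paper sidesteps this entirely: it minimizes $d_X(1,g)$ over $g=\psi\theta f\in K\cap\Psi\Theta f$, applies Lemma \ref{l:4.1} to the geodesic $[1,g]$ (whose image is a loop, so conclusion (2) is forced and no injectivity-radius case is needed), and uses the resulting $k\in K\cap\Stab(A)$, which shortens $g$ automatically since $d_X(x,k.y)\le 2L_1+3$ while $d_X(x,y)\ge 2L_2$. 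All the work then goes into showing $k.g\in\Psi\Theta f$, using the quadrilateral $1,\psi,\psi\theta,g$ (the short side $\eta$ from $\psi\theta$ to $g$ cannot come near the deep points, which is why the excursion must be near $\xi_1\cup\xi_2$), followed by the case analysis above. If you want to keep your minimization over $|\psi\theta|$ you must in any case apply the shortening element coming from Lemma \ref{l:4.1} for $[1,\psi\theta]$ itself and prove $k\,\psi\theta\in\Psi\Theta$ directly --- which lands you back at the mixed case and the $P_{B_1,B_2}$--wideness you have not arranged.
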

\begin{proof}
By Lemma \ref{l:PF wide fillings} and the fact that all subgroups of finitely generated abelian groups are separable, there exist finite-index subgroups $K_P \unlhd P$ so that if $N_P \le K_P$ then the filling is $(H,S)$--wide for each $H$.  Below, we find other finite-index subgroups $\hat{K}_P \unlhd P$ so that if $N_P < \hat{K}_P$ then the condition on double cosets holds.  We then choose $N_P \le K_P \cap \hat{K}_P$.  For the remainder of the proof we concentrate on finding the subgroups $\hat{K}_P$.

Let $X$ be the cusped space for $(G,\mc{P})$ and suppose that $X$ is $\delta$--hyperbolic.  We further assume that $\delta$ is chosen so that the cusped spaces of all sufficiently long fillings are $\delta$--hyperbolic.   We suppose that $\delta \ge 1$.  Let $\lambda$ be a quasiconvexity constant which works for every element in $\mc{H}$.
Finally, let $M = \max\{d_X(1,f)\mid f\in F\}$.

Fix $f \in F$ and $\Psi, \Theta \in \mc{H}$ so that $1 \not\in \Psi\Theta f$, and consider the equation $k \in \Psi\Theta f$ for elements $k$ of the kernel of a filling.  After finding conditions on the filling which ensure there is no such element, we consider a filling appropriate for all $f \in F$ simultaneously.

  Let $\mc{D}$ be a collection of representatives of $\Psi$--conjugacy classes of maximal uniquely parabolic subgroups of $\Psi$.  For $D \in \mc{D}$, we have $D \le P_{D}^{c_D}$ for some $P_{D} \in \mc{P}$ and some (shortest) $c_D \in G$.  Similarly, let $\mc{E}$ be a collection of representatives of $\Theta$--conjugacy classes of maximal parabolic subgroups of $\Theta$, and for $E \in \mc{E}$ we have $E \le P_E^{d_E}$ for some $P_E \in \mc{P}$ and some (shortest) $d_E \in G$.  Let $X_\Psi$ be the cusped space for the pair $(\Psi,\mc{D})$ and let $X_\Theta$ be the cusped space for $(\Theta,\mc{E})$ (both with respect to some choices of generating sets).  Let $\check{\iota}_\Psi \co X_\Psi \to X$ and $\check{\iota}_\Theta \co X_\Theta \to X$ be the induced maps of cusped spaces, and note that $\check{\iota}_\Psi(X_\Psi^{(0)})$ and $\check{\iota}_\Theta(X_\Theta^{(0)})$ are both $\lambda$--quasiconvex subsets of $X$.

  In order to apply the results from Section \ref{s:Properties}, choose $L_1 = 10\delta$ and $L_2 = \max \{ 20\delta +  M  + \lambda+4, R_\Psi, R_\Theta \}$, where $R_\Psi$ and $R_\Theta$ are the constants from Proposition \ref{prop:MMP A.6} applied to $\Psi$ and $\Theta$, respectively.

For $P \in \mc{P}$, let
\[ S_P \supseteq \{p\in P\mid d_X(1,p)\leq 32\delta + 2 M  + 4\lambda + 2L_1  + 3\} \]
be a finite set which is large enough so that for all $H \in \mc{H}$ the $(H,S)$--wideness condition of Lemma \ref{l:4.2} is satisfied with $L_1$ and $L_2$ as above.
Consider the collection of subgroups of $P$ of the form $P_{B_1,B_2} = \langle B_1,B_2 \rangle$ where $B_1 = P \cap \Psi^{g_1}$ for some $g_1 \in G$ and $B_2 = P \cap \Theta^{g_2}$ for some $g_2 \in G$.  There are finitely many such pairs of subgroups of $P$.  
For each such pair $(B_1,B_2)$, 
by Lemma \ref{l:simple wide} there exists a finite-index $\hat{K}_{B_1,B_2} \lhd P$ which is $(P_{B_1,B_2},S_P)$--wide.
 We define $\hat{K}_P = \bigcap \hat{K}_{B_1,B_2}$ and check the condition on double cosets.

Choose $N_P \le \hat{K}_P$, and consider the filling
\[	G \to G\left( \left\{ N_P  \mid P \in \mc{P} \right\}  \right) = G/K	.	\]

In order to obtain a contradiction suppose that there is an element $g \in K$, and  elements $f\in F$, $\psi \in \Psi$ and $\theta \in \Theta$ so that
\[	g = \psi\theta f	.	\]
Choose a $g$ so that $d_X(1,g)$ is minimal amongst all choices of $g$ for which there is such an expression.  

Consider a geodesic quadrilateral in $X$ with vertices $1,\psi,\psi\theta,g$, and let $\xi_1$ be the geodesic from $1$ to $\psi$, $\xi_2$ the geodesic from $\psi$ to $\psi\theta$, $\eta$ the geodesic from $\psi\theta$ to $g$ and $\rho$ the geodesic from $1$ to $g$, respectively.  By assumption, we know that $g \ne 1$.

Let $\pi \co X \to \overline{X}$ be the map on cusped spaces induced by the filling map $\pi \co G \to G/K$.
Since $g \in K \smallsetminus \{ 1 \}$ the image of $\rho$ in $\overline{X}$ is a loop, so condition (1) from Lemma \ref{l:4.1} cannot hold.  This means that condition (2) from Lemma \ref{l:4.1} holds.  Let $A$ be a horoball $L_2$--penetrated by $\rho$, so $\rho$ meets $A$ in a segment $[x,y]$, and let $k \in K \cap \Stab(A)$ be so that $d_X(x,k.y) \le 2L_1+3$. 
It is straightforward to see that $d_X(1,k.g) < d_X(1,g)$, since $d_X(x,k.y) \le 2L_1+3$ but $d_X(x,y) \ge 2L_2 > 2L_1 + 4$.  We arrive at a contradiction by showing that $k.g \in \Psi\Theta f$, contradicting the choice of $g$ as a shortest element of $K$ with such an expression.

Without loss of generality, the subsegment of $\rho$ between $x$ and $y$ is a geodesic through $A$ which consists of a vertical segment down from $x$, a horizontal segment of at most $3$ edges, and then a vertical segment terminating at $y$ (see \cite[Lemma 3.10]{rhds}).  Let $x'$ be the point  on this geodesic directly below $x$ at depth $3\delta +  M  + \lambda$ and let $y'$ be the point directly below $y$ at depth $3\delta +  M  + \lambda$.  The quadrilateral $\xi_1\cup \xi_2 \cup \eta \cup \rho$ is $2\delta$--slim, so there are points on $\eta \cup \xi_1 \cup \xi_2$ within $2\delta$ of $x'$ and of $y'$.
Because $\eta$ is a geodesic (of length at most $ M $) joining two points at depth $0$, no point on $\eta$ can be within $2\delta$ of either $x'$ or $y'$.  Therefore, there are points on $\xi_1 \cup \xi_2$ within $2\delta$ of $x'$ and of $y'$.

The geodesic $\xi_1$ travels between two points in $\Psi$ and $\xi_2$ joins two points in $\psi\Theta$.  By quasiconvexity, any point on $\xi_1$ lies within $\lambda$ of a point in $\check{\iota}_\Psi(X_\Psi)$ and any point on $\xi_2$ lies within $\lambda$ of a point in $\psi . \check{\iota}_\Theta(X_\Theta)$.  Let $u_0$ and $v_0$ be points in $\check{\iota}_\Psi(X_\Psi) \cup \psi. \check{\iota}_\Theta(X_\Theta)$ lying within distance $2\delta + \lambda$ of $x'$ and $y'$ respectively.  Note that $u_0$ and $v_0$ lie in the horoball $A$.

There are points at depth more than $0$ in a horoball $A$ which lie in $\check{\iota}_\Psi(X_\Psi)$ exactly when they are of the form $(sc_DP_D, hc_D,n)$ for some $s \in \Psi$, $h \in sD$ and $n \in \N$, where $D \in \mc{D}$ is so that $D \le P_D^{c_D}$, as above, and $A$ is the horoball based on the coset $sc_DP_D$.

Similarly, there are points at depth more than $0$ in $A$ which lie in $\psi . \check{\iota}_\Theta(X_\Theta)$ exactly when they are of the form $(\psi.td_EP_E, \psi. gc_D, m)$ for $t \in \Theta$, $g \in tE$ and $m \in \N$, where $E \in \mc{E}$ and $E \le P_E^{d_E}$, and $A$ is the horoball based on $\psi.td_EP_E$.

The points $u_0$, $v_0$ have one of these forms, and they are at distance at most $2\delta + \lambda$ from $x'$ and $y'$ respectively, which implies that the appropriate $n$ or $m$ is at most $3\delta +  M  + 2\lambda$.  Thus, there are points $u, v$ at depth $0$ in $A$, directly above $u_0$ and $v_0$ respectively, so that $d_X(u,x), d_X(v,y) \le 10\delta + 2 M  + 4\lambda$.  All of the points in $A$ directly above $u_0$ lie in $\check{\iota}_\Psi(X_\Psi)$ or $\psi. \check{\iota}_\Theta(X_\Theta)$, except possibly the point $u$ at depth $0$.  This point $u$ will not lie in $\check{\iota}_\Psi(X_\Psi)$ unless $c_D = 1$, and similarly for $\check{\iota}_\Theta(X_\Theta)$.  However, certainly $u$ lies within distance $1$ of $\check{\iota}_\Psi(X_\Psi)$ or $\psi. \check{\iota}_\Theta(X_\Theta)$.  Similarly, $v$ lies within distance $1$ of $\check{\iota}_\Psi(X_\Psi)$ or $\psi. \check{\iota}_\Theta(X_\Theta)$.

We deal with four cases, depending on whether each of $u_0$ and $v_0$ are contained in $\check{\iota}_\Psi(X_\Psi)$ or $\psi . \check{\iota}_\Theta(X_\Theta)$.

{\bf Case 1:}  Both $u_0$ and $v_0$ are contained in $\check{\iota}_\Psi(X_\Psi)$.

(The case where they are both contained in $\psi. \check{\iota}_\Theta(X_\Theta)$ is entirely similar and we omit it.)

In this case, if $u_0 = (sc_DP_D,\psi_uc_D,n)$ then $u = \psi_uc_D$, where $\psi_u \in \Psi$ and $A$ is the horoball based on $sc_DP_D$.  For ease of notation we write $P = P_D$ and $c = c_D$, and so we have $\psi_uc \in scP$.

Note that $v = \psi_vc \in scP$ also, for some $\psi_v \in \Psi$.
We have $u^{-1}v = c^{-1}(\psi_u^{-1}\psi_v)c \in D^{c^{-1}}$.

Now, $d_X(x,k.y) \le 2L_1+3$, and we also have $d_X(x,u), d_X(y,v) \le \alpha$, from which it follows that $d_X(u,k.v) \le 2\alpha + 2L_1+3$.  Write $w = v^{-1}k^{-1}u$, a group element of $X$--length at most $2\alpha + 2L_1+3$ and note that $u^{-1}k^{-1}u$ is in the filling kernel $N_P \lhd P$, and so $cu^{-1}k^{-1}uc^{-1}$ is contained in $N_P^c$.  On the other hand, we also have
\begin{eqnarray*}
cu^{-1}k^{-1}uc^{-1} &=&  cu^{-1}v(v^{-1}k^{-1}u)c^{-1} \\
&=& cu^{-1}vwc^{-1} \\
&=& c (c^{-1}\psi_u^{-1}\psi_v c)wc^{-1} \\
&=& (\psi_u^{-1}\psi_v) cwc^{-1}	.
\end{eqnarray*}	
Note that $w\in S_P$, and that the filling is $(\Psi,S_P)$--wide.
Since $(\psi_u^{-1}\psi_v) \in D$, this implies that $cwc^{-1} \in D$, so $cu^{-1}k^{-1}uc^{-1} = (\psi_u^{-1}\psi_v)cwc^{-1} \in D$.
Therefore, $\psi_u^{-1}k\psi_u = cu^{-1}kuc^{-1} \in D$, which means that 
\[	k.\psi = k\psi_u(\psi_u^{-1}\psi) = \psi_u (\psi_u^{-1}k\psi_u)\psi_u^{-1}\psi \in \Psi	.	\]
Therefore,
\[	k.g = \left(k.\psi \right) \theta f	,	\]
gives an expression for $k.g$ as an element of $\Psi\Theta f$, contradicting the fact that $g$ was the shortest element of $K$ with such an expression.

{\bf Case 2:}  $u_0$ is contained in $\check{\iota}_\Psi(X_\Psi)$ and $v_0$ contained in $\psi .\check{\iota}_\Theta(X_\Theta)$.

We can write $u_0 = (sc_DP_D,\psi_uc_D,n)$ and $v_0 = (\psi.td_EP_E, \psi. r_vd_E, m)$, so $u = \psi_uc_D$ and
$v = \psi.\theta_v d_E$, where $\psi_u \in \Psi$, $\theta_v \in \Theta$, $D \le P_D^{c_D}$ and $E \le P_E^{d_E}$.
We clearly have $P_E = P_D$, which we write as $P$.  We write $c = c_D$ and $d = d_E$, and note that 
$A$ is the horoball based on the coset $scP = \psi tdP$.

We still have $d_X(u,x), d_X(v,y) \le \alpha$.  The geodesic $\xi_1$ intersects $A$ in a segment $[g_1,h_1]$, where the entrance point $g_1$ is within $4\delta$ of $x$, and within $\alpha$ of $u$.  The exit point $h_1$, we may similarly argue, is within $\alpha$ of some group element $w = \psi_w c$ in the coset $scP$, with $\psi_w\in \Psi$.  Likewise, the geodesic $\xi_2$ intersects the horoball $A$ in a segment $[g_2,h_2]$, where $d_X(g_2,h_1)\leq 4\delta$, and there is another point $z = \psi\theta_z d$ in $scP$ with $\theta_z\in \Theta$, and satisfying $d_X(z,g_2)\leq \alpha$.  See Figure \ref{fig:case2}.
\begin{figure}[htbp]
  \centering
  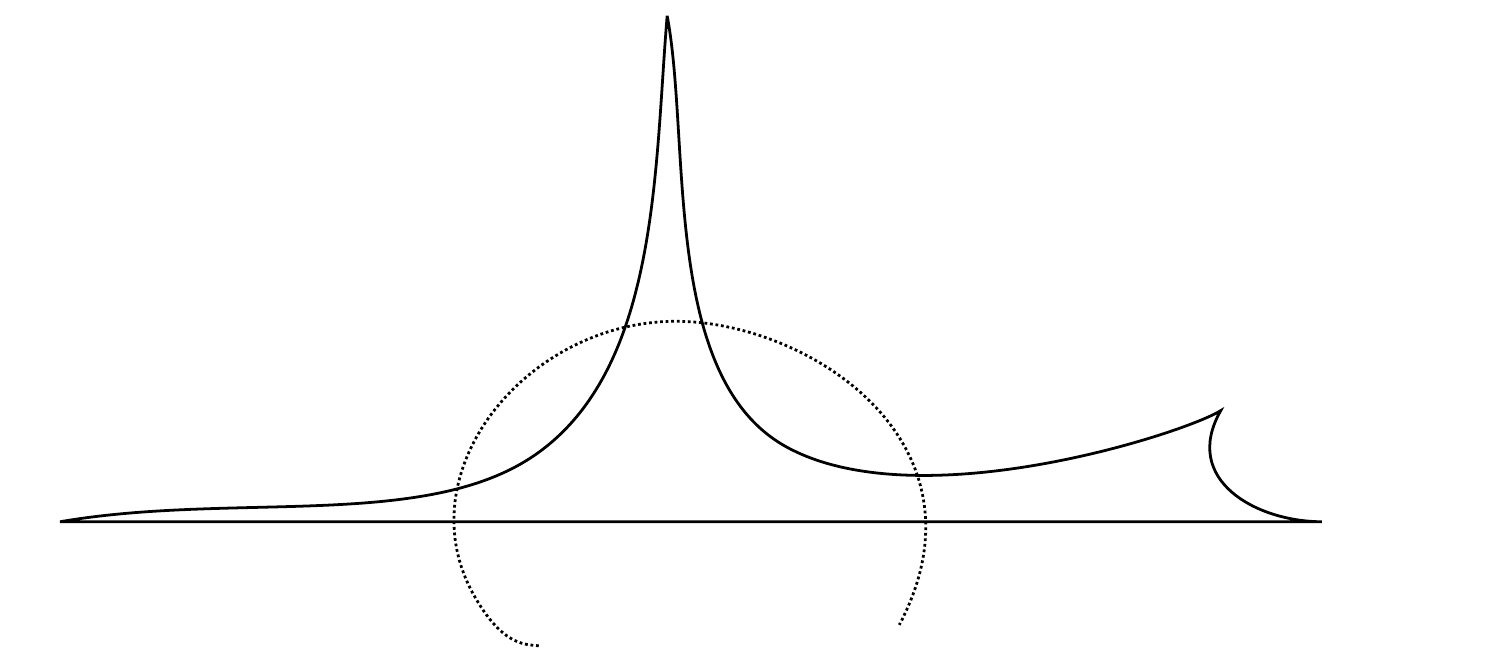
  \caption{Case 2.  The dotted line represents the coset $scP = \psi tdP$.}
  \label{fig:case2}
\end{figure}

We have $u^{-1}w \in D^{c^{-1}}$ and
$z^{-1}v \in E^{d^{-1}}$.  Both $D^{c^{-1}}$ and $E^{d^{-1}}$ are subgroups of $P$.

Let $B_1 = D^{c^{-1}}$ and $B_2 = E^{d^{-1}}$ so $u^{-1}wz^{-1}v \in P_{B_1,B_2}$.  Now,
\begin{eqnarray*}
u^{-1}k^{-1}u &=& (u^{-1}w) (w^{-1}z) (z^{-1}v) (v^{-1}k^{-1}u) \\
&=& (u^{-1}w)(z^{-1}v) \left( (w^{-1}z) (v^{-1}k^{-1}u) \right)	.
\end{eqnarray*}
(Note that we use here that $P$ is abelian.)

Since $w^{-1}z$ has length at most $2\alpha + 4\delta$ and $v^{-1}k^{-1}u$ has length at most $2\alpha + 2L_1+3$, so the last of the three terms above is in $S_P$ and
 we can apply the $(P_{B_1,B_2},S_P)$--wideness of the kernel to deduce that
$u^{-1}ku \in P_{B_1,B_2} = B_1B_2$.  Choose elements $b_1 \in B_1$ and $b_2 \in B_2$ so that $u^{-1}ku = b_1b_2$.  We now have
\begin{eqnarray*}
k \psi \theta 
&=& u (u^{-1}ku) (u^{-1}w) (w^{-1}z) (z^{-1}v) (v^{-1}\psi \theta) \\
&=& u (b_1b_2) (u^{-1}w) (w^{-1}z) (z^{-1}v) (v^{-1} \psi \theta) \\
&=& u \left( b_1(u^{-1}w) \right) (w^{-1}z) \left( (z^{-1}v)b_2 \right) (v^{-1}\psi \theta) \\
&=& u \left( b_1(u^{-1}w) \right) (w^{-1} \psi) (\psi^{-1}z) \left( (z^{-1}v)b_2 \right) (v^{-1}\psi \theta) \\
&=& \psi_u \left( c (b_1u^{-1}w) c^{-1} \right) (cw^{-1}\psi) (\psi^{-1}z) \left( (z^{-1}v) b_2\right) (v^{-1}\psi \theta) \\
&=&\left(  \psi_u \left( c (b_1u^{-1}w) c^{-1} \right) (\psi_w^{-1} \psi) \right) \left( \theta_z \left(d (z^{-1}v b_2) d^{-1}\right) (\theta_v^{-1} \theta) \right) \\
\end{eqnarray*}
The first three terms of this expression are in $\Psi$ and the last three terms are in $\Theta$, which proves
that $k\psi \theta \in \Psi \Theta$.   Therefore, $k.g = k \psi \theta f \in \Psi \Theta f$.  Since we know that $d_X(1,k.g) < d_X(1,g)$, this contradicts the minimality of $g$, hence proving the result in Case 2.

It remains to note that the case that $u_0$ is contained in $\psi . \check{\iota}_\Theta(X_\Theta)$ and $v_0$ is contained in $\check{\iota}_\Psi(X_\Psi)$ essentially becomes Case 1.  Indeed, suppose that $v_0$ is contained in $\check{\iota}_\Psi(X_\Psi)$.  Then the geodesic from $v$ to $1$ lies near to the geodesic from $y$ to $1$, which easily implies (since $x$ lies on the geodesic from $y$ to $1$) that $x$ lies near $\check\iota_\Psi(X_\Psi)$, as required.  This completes the proof of Proposition \ref{p:DC WS}.
\end{proof}

We now turn to the proof of Theorem \ref{t:VS}.

\begin{proof}[Proof (of Theorem \ref{t:VS})]
Pass to a finite cover which is orientable, so that all cusps in $M$ have torus cross-sections.
It is well known that if $\mc{P}$ is a collection of representatives of $G$--conjugacy classes of maximal cusp subgroups of $M$ then $(G,\mc{P})$ is relatively hyperbolic.

By Theorem \ref{t:CF}, there is a CAT$(0)$ cube complex $X$ upon which $G$ acts freely and cocompactly.
By \cite[Criterion 2.3]{PW} (see also \cite[Section 4]{HaglundWise-Coxeter}), to prove that the action of $G$ on $X$ is virtually special it suffices to prove that for a certain finite list of subgroups $Q_i$ which stabilize hyperplanes in $X$, the subgroups $Q_i$ and double cosets $Q_iQ_j$ are separable in $G$.  Since $G$ is a Kleinian group, it is LERF by \cite[Corollary 9.4]{VH}, so it remains to prove double coset separability.  

The cube complex $X$ built by Cooper and Futer for Theorem \ref{t:CF} is built using the Sageev construction \cite{Sageev} (see also \cite{hruskawise:finiteness}).  The hyperplane subgroups in $G$ are commensurable to the codimension $1$ subgroups, which are quasi-Fuchsian surface subgroups and therefore geometrically finite.  It now follows immediately by \cite[Corollary 1.3]{HruskaQC} that the subgroups $Q_i$ are relatively quasiconvex in $G$ .
  
Suppose now that $h \not\in Q_iQ_j$.  Equivalently, $1 \not\in Q_iQ_jh^{-1}$. By Proposition \ref{p:pi(H) QC}, for sufficiently long and $Q_i$--wide fillings the image of $Q_i$ is relatively quasiconvex, and similarly for $Q_j$.
 By Proposition \ref{p:DC WS}, there exist finite-index subgroups $\{ K_P \unlhd P \mid P \in \mc{P} \}$ so that for any choices $\{ \gamma_P \in K_P \mid P \in \mc{P} \}$, the filling
\[ G \to \overline{G} = G\left( \left\{ \gamma_P \mid P \in \mc{P} \right\} \right)		\]
is such that the images of $Q_i$ and $Q_j$ are relatively quasiconvex and there is no element of $K$ in $Q_iQ_j h^{-1}$.  For such a filling, the image of $h$ is outside the image of $Q_iQ_j$.  
 
 Possibly replacing the $\gamma_P$ by powers (which does not change containment in $K_P$, and so the above properties continue to hold), the Orbifold Hyperbolic Dehn Surgery Theorem \cite[Theorem 5.3]{DunbarMeyerhoff}, implies that the group $\overline{G}$ is the fundamental group of a compact hyperbolic orbifold, and so is Kleinian and word-hyperbolic.  Since it is Kleinian, it is LERF by \cite[Corollary 9.4]{VH}, and since it is also word-hyperbolic \cite[Theorem 1.1]{minasyan:subsetgferf} implies that all double cosets of quasiconvex subgroups of $\overline{G}$ are separable.  Therefore, the image of $h$ can be separated from the image of $Q_iQ_j$ in a finite quotient of $\overline{G}$, which is clearly also a finite quotient of $G$.

This proves that $Q_iQ_j$ is separable in $G$, which proves that the $G$--action on $X$ is virtually special, as required.
\end{proof}

\section{Relative height and relative multiplicity}\label{s:Henry}

For quasiconvex subgroups of hyperbolic groups, the {\em height} is an important invariant.  For full relatively quasiconvex subgroups, it remains a useful invariant, but because we cannot control the normalizer in $P$ of an intersection $H \cap P$ when $H$ is (non-full) relatively quasiconvex and $P$ is a maximal parabolic subgroup, height is not always a useful notion as it is too often infinite.  Instead, we should consider the relative height, defined as follows.

\begin{definition} (cf. \cite[$\S$1.4]{hruskawise:packing})
Suppose that $(G,\mc{P})$ is relatively hyperbolic and $H \le G$.  The {\em relative height} of $H$ in $(G,\mc{P})$ is the maximum number $n \ge 0$ so that there are distinct cosets $\{ g_1H ,\ldots , g_nH \}$ so that
$\bigcap\limits_{i=1}^n g_iHg_i^{-1}$ is an infinite non-parabolic subgroup.
\end{definition}
In \cite{hruskawise:packing}, they refer to relative height merely as `height', but we prefer to keep this term for its traditional meaning.

\begin{remark}
It follows from the classification of groups acting isometrically on $\delta$--hyperbolic spaces that a subgroup of a relatively hyperbolic group is infinite and non-parabolic if and only if it contains a loxodromic element.  We use this equivalent characterization without further mention.
\end{remark}

In this section, we prove results for relative height analogous to those proved for height in \cite[Appendix A]{VH}.  Specifically, we define a notion of {\em relative multiplicity} (see Definition \ref{d:rel mul}) and prove in Theorem \ref{t:rh is rm} that relative multiplicity is equal to relative height.  This gives a new proof of a theorem of Hruska and Wise \cite{hruskawise:packing} that the relative height of a relatively quasiconvex subgroup is finite.
  In Theorem \ref{t:rh decrease} we prove that for sufficiently long and $H$--wide fillings the relative height of a relatively quasiconvex subgroup does not increase under Dehn filling.

The definition of a \emph{weakly geometrically finite} (or \emph{WGF}) action is given in \cite[A.27]{VH}.  We note here that a weakly geometrically finite action differs from the usual notion of a \emph{geometrically finite} action (as in \cite[Definition 3.2 (RH-2)]{HruskaQC}) in allowing horoballs with finite stabilizer.

Fix a relatively quasiconvex subgroup $(H,\mc{D})$.  
Let $X_H$ be a cusped space for $(H,\mc{D})$ and  $\check\iota \co X_H \to X$ be the extension of the natural inclusion of $H$ into $G$ on the level of cusped spaces, as described in \cite[Section 3]{agm}.
\begin{definition}\label{def:RHull}
Suppose that $(G,\mc{P})$ is relatively hyperbolic and that $(H,\mc{D})$ is relatively quasiconvex.  Let $X$ be a cusped space for $(G,\mc{P})$ (considered as containing $G$ as a subset), let $\widetilde{\ast}$ be the basepoint of $X$, and let $R \ge 0$.  An {\em $R$--hull for $H$ acting on $X$} is a connected $H$--invariant full sub-graph $\widetilde{Z} \subset X$ so that
\begin{enumerate}
\item\label{RHull:basepoint} $\widetilde{\ast} \in \widetilde{Z}$;
\item\label{RHull:geodesic} If $\gamma$ is a geodesic in $X$ with endpoints in $\Lambda(H)$ then $N_R(\gamma) \cap N_R(G) \subset \widetilde{Z}$; and
\item\label{RHull:horoball} If $A$ is any horoball containing a vertex $a$ of depth greater than $0$ in the image $\check\iota(X_H)$, then $\tilde{Z}\cap A^{(0)}$ contains every vertex of a maximal vertical ray in $A$ containing $a$.
\item\label{RHull:WGH} The action of $(H,\mc{D})$ on $\widetilde{Z}$ (with its induced path metric) is WGF.
\end{enumerate}
\end{definition}

\begin{remark}
 This definition is not the same as \cite[Definition A.32]{VH} unless $H$ is full relatively quasiconvex (as was assumed in \cite{VH}).  It is important that we do not include an $R$--neighborhood of $\gamma$, but only that part of the $R$--neighborhood near the Cayley graph.  The third condition in \cite[Definition A.32]{VH} has similarly been modified.
Both of these changes are made so that Lemma \ref{l:embed} below is true.
\end{remark}

\begin{definition}
 Suppose that $(H,\mc{D}) \le (G,\mc{P})$ is relatively quasiconvex, and $\check{\iota}\co X_H\to X$ is the inclusion of cusped spaces as above.  For a positive integer $D$, the {\em restricted $D$--neighborhood of $\check{\iota}(X_H)$}, denoted $N^R_D(\check{\iota}(X_H))$, is the full subgraph of $X$ on the vertices of either of the following two types:
\begin{enumerate}
 \item Vertices of $N_D(\check{\iota}(X_H))\cap N_D(G)$; and
 \item For any horoball $A$ so $\check{\iota}(X_H)$ contains vertices of arbitrary depth in $A$, include all vertices $a\in A$ which are connected by a vertical geodesic to a vertex of the first type.
\end{enumerate}
\end{definition}

\begin{lemma} [cf. Lemma A.41,  \cite{VH}]
Let $R \ge 0$.  There exists some $D$ so that the restricted $D$-neighborhood of $\check\iota(X_H)$ is an $R$--hull for the action of $H$ on $X$.
\end{lemma}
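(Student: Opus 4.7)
The plan is to choose $D$ sufficiently large---concretely, $D\geq \max\{R+\lambda,M\}$, where $\lambda$ is a relative quasiconvexity constant for $(H,\mc D)$ in $(G,\mc P)$ and $M=\max_{E\in\mc D}d_X(1,c_E)$---and verify the four conditions of Definition \ref{def:RHull} for $\widetilde Z:=N^R_D(\check\iota(X_H))$ in turn.

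Conditions (1) and (2) are essentially immediate. The basepoint $\widetilde\ast=1$ lies in $H\subset\check\iota(X_H)\cap G$, so it lies in $\widetilde Z$. If $\gamma$ is a geodesic with endpoints in $\Lambda(H)$, relative quasiconvexity (which passes to limit points in the Gromov boundary) places $\gamma$ within the $\lambda$-neighborhood of $\check\iota(X_H)$, so $N_R(\gamma)\cap N_R(G)\subset N_{R+\lambda}(\check\iota(X_H))\cap N_R(G)\subset \widetilde Z$. For condition (3), I use the formula $\check\iota(sE,h,n)=(sc_E P_E,hc_E,n)$: a positive-depth vertex $a$ of $\check\iota(X_H)$ in a horoball $A$ forces $\check\iota(X_H)\cap A$ to contain $(sc_E P_E,hc_E,k)$ for every $k\geq 1$, so $\check\iota(X_H)$ has unbounded depth in $A$, which triggers the type-2 clause of the definition of $N^R_D$. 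The top vertex $hc_E\in G$ of the maximal vertical ray containing $a$ is at distance at most $M\leq D$ from $h\in H\subset\check\iota(X_H)$, so it is type-1; every other vertex on the ray is connected to $hc_E$ by a vertical geodesic, hence is type-2, and therefore lies in $\widetilde Z$.

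The main obstacle is condition (4), the weak geometric finiteness of the $(H,\mc D)$-action on $\widetilde Z$. My plan is to fix a truncation depth $K\geq D$ and show that the thick part of $\widetilde Z$---the type-1 vertices together with the type-2 vertices at depth at most $K$---has only finitely many $H$-orbits, while the complement decomposes into finitely many $H$-orbits of deep horoball neighborhoods, each stabilized by a conjugate of some $E\in\mc D$. The key estimate is a uniform bound $d_X(z,H)\leq C=C(D,K,M)$ for every thick vertex $z$: since each edge of $X$ changes depth by at most one, we have $d_X(z,y)\geq |\mathrm{depth}(z)-\mathrm{depth}(y)|$ for any two vertices, so the condition $z\in N_D(G)$ (bounding $\mathrm{depth}(z)\leq D$) together with $d_X(z,y)\leq D$ for the witness $y\in\check\iota(X_H)$ forces $\mathrm{depth}(y)\leq 2D$; then $y$ lies within $M+2D$ of $H$, and $z$ within a further $D$ (plus $K$ in the type-2 case). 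Using $G$-invariance of $d_X$ and local finiteness of $X$, every thick vertex then lies in $H\cdot(B_X(1,C)\cap X)$, a finite union of $H$-orbits. The peripheral structure inherits from $\mc D$: the only horoballs of $X$ which contribute depth-unbounded portions to $\widetilde Z$ are precisely the $H$-translates of the horoballs based at $c_E P_E$ for $E\in\mc D$, each stabilized in $H$ by $E$, and this exactly matches the canonical peripheral structure on $H$, yielding the WGF structure.
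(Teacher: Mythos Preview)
Your proof is correct in outline and follows the same condition-by-condition structure as the paper, with one small quantitative slip: in condition~\eqref{RHull:geodesic}, $\lambda$-quasiconvexity only guarantees that geodesics with endpoints \emph{in} $\check\iota(X_H)$ lie in its $\lambda$-neighborhood; for a bi-infinite geodesic with ideal endpoints in $\Lambda(H)\subset\partial\check\iota(X_H)$ a thin-triangle argument costs an extra $2\delta$, so one needs $D\geq R+\lambda+2\delta$ as the paper takes.  Your treatment of condition~\eqref{RHull:horoball} is more explicit than the paper's one-line ``built into the definition'', and correct.

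The genuine difference is in condition~\eqref{RHull:WGH}.  The paper argues that for large $D$ the restricted $D$-neighborhood is equivariantly quasi-isometric to the full $D$-neighborhood of $\check\iota(X_H)$, and that this in turn is quasi-isometrically embedded in $X$; the limit set of $\widetilde Z$ is then equivariantly identified with $\partial X_H=\partial(H,\mc D)$, from which WGF follows.  Your approach is more hands-on: you bound the distance from any thick vertex of $\widetilde Z$ to $H$, deduce cocompactness of the $H$-action on the thick part by local finiteness of $X$, and then match the deep horoball pieces to $\mc D$.  This is more elementary and explicit, and avoids any discussion of quasi-isometric embeddings; the paper's route is terser and, via the QI-embedding, more directly yields Gromov hyperbolicity of the intrinsic path metric on $\widetilde Z$ (which, depending on the precise formulation of WGF in \cite[A.27]{VH}, may still be something you need to check).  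Both arguments work.
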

\begin{proof}
  We first note that if any of the requirements of an $R$--hull are satisfied by the restricted $D$--neighborhood of $\check\iota(X_H)$, then they are satisfied for the restricted $D'$--neighborhood of $\check\iota(X_H)$, for any $D'\geq D$.  It therefore suffices to consider each of the four requirements separately, and take $D$ to be the maximum needed for any of the four.

  Condition \eqref{RHull:basepoint} is satisfied for any $D$, since $\widetilde{\ast} = 1\in H$.

  Condition \eqref{RHull:geodesic} is satisfied as soon as $D\geq R+2\delta+\lambda$, where $\lambda$ is the quasiconvexity constant for $\check\iota(X_H)$.  Indeed, $\Lambda(H) \subset \Lambda(\check\iota(X_H))$, so if $\gamma$ is a biinfinite geodesic with endpoints in $\Lambda(H)$, it must lie in a $\lambda+2\delta$--neighborhood of $\check\iota(X_H)$.  Suppose $x\in N_R(\gamma)\cap N_R(G)$; we want to show that $x\in N^R_D(\check{\iota}(X_H))$.  Let $z\in \gamma$, $g\in G$ be vertices at distance at most $R$ from $x$.  As we have noted, there is a $q\in \check\iota(X_H)$ satisfying $d_X(z,q)\leq \lambda+2\delta$.  Thus $x$ lies in the $(R+\lambda+2\delta)$--neighborhood of $\check\iota(X_H)$.  If $D\geq R+2\delta+\lambda$, then $x\in N_D(\check\iota(X_H))\cap N_D(G)$.

  Condition \eqref{RHull:horoball} is built in to the definition of restricted $D$--hull.

  Condition \eqref{RHull:WGH} (the weak geometric finiteness) follows once we observe that for large enough $D$, the restricted $D$--neighborhood is equivariantly quasi-isometric to the $D$--neighborhood, and either one is quasi-isometrically embedded in $X$.  In particular, the limit set of the restricted $D$--neighborhood is equivariantly homeomorphic to $\partial X_H =\partial(H,\mc{D})$.  (Though in general $\check\iota$ is not a quasi-isometric embedding if some $D \in \mc{D}$ is very distorted in $P_D^{c_D}$.)
\end{proof}

Let $\widetilde{Z}$ be an $R$--hull for the action of $H$ on $G$, and let $Z = \leftQ{\widetilde{Z}}{H}$.  Similarly, let $Y = \leftQ{X}{G}$.  Then there is a natural map $i \co Z \to Y$ which induces the inclusion of $H$ into $G$ (in the sense described in \cite{VH}).

For $n > 0$, let
\[  S_n = \left\{(z_1,\ldots,z_n)\in Z^n\mid
i(z_1)=\cdots=i(z_n)\right\}\setminus\Delta \]
where $\Delta = \left\{ (z_1, \ldots , z_n) \mid \mbox{ there exist $i \ne j$ so that } z_i = z_j \right\}$ is the fat diagonal.

Points in $S_n$ have a well-defined {\em depth} which is the depth of the image in $Y$
We consider components $C$ of $S_n$ which contain a point with depth $0$.

As in \cite{VH}, choosing a maximal tree in $Z$, and a basepoint $p$ at depth $0$, a component $C$ of $S_n$ induces well-defined maps $\tau_{C,i} \co \pi_1(C,p) \to H$, for $i = 1, \ldots , n$.

\begin{definition} \label{d:rel mul}
  The {\em relative multiplicity} of $i \co Z \to Y$ is the largest $n$ so that $S_n$ contains a component $C$ so that for all $i \in 1, \ldots , n$ the group
$	\tau_{C,i}(\pi_1(C,p))	$
  contains a loxodromic element.
\end{definition}

\begin{theorem}[cf. Theorem A.38, \cite{VH}]\label{t:rh is rm}
For sufficiently large $R$, depending only on $\delta$ and the quasi-convexity constant of $H$, if $\widetilde{Z}$ is an $R$--hull for the action of $H$ on $X$, and $i : Z \to Y$ is as described above, then the relative height of $H$ in $G$ is equal to the relative multiplicity of $i : Z \to Y$.
\end{theorem}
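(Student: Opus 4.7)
\emph{Plan.} I will establish the two inequalities separately, following Agol's proof of \cite[Theorem A.38]{VH} but adapting to non-full $(H,\mc{D})$ and to the fact that condition (\ref{RHull:geodesic}) only captures $N_R(\gamma)\cap N_R(G)$ rather than all of $N_R(\gamma)$.

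The easy direction, \emph{relative multiplicity $\le$ relative height}: given a component $C\subset S_n$ with each $\tau_{C,i}(\pi_1(C,p))$ containing a loxodromic element, lift the basepoint $p=(z_1,\ldots,z_n)\in C$ to some $\widetilde{p}\in G\subset X$ and lift each coordinate $z_i$ to a vertex of the form $g_i^{-1}\widetilde{p}\in\widetilde{Z}$; distinctness of the cosets $g_iH$ follows from $p\notin\Delta$ together with freeness of the $G$-action on $X^{(0)}$. A loop $\gamma\in\pi_1(C,p)$ projects to a loop in $Y$ whose unique lift to $X$ starting at $\widetilde{p}$ ends at $\xi\widetilde{p}$ for some $\xi\in G$; matching this with the coordinate lifts into $\widetilde{Z}$ shows $\tau_{C,i}(\gamma)=g_i^{-1}\xi g_i$ for every $i$. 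In particular $\xi\in\bigcap_i g_iHg_i^{-1}$, and $\xi$ is loxodromic precisely when any $\tau_{C,i}(\gamma)$ is, so the relative height is at least $n$.

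The harder direction, \emph{relative height $\le$ relative multiplicity}: given distinct cosets $g_1H,\ldots,g_nH$ with a loxodromic $\xi\in\bigcap_i g_iHg_i^{-1}$, let $\alpha\subset X$ be its axis. Since the endpoints of $\alpha$ are not parabolic, $\alpha$ exits every horoball it enters and therefore contains depth-$0$ vertices; fix $v\in\alpha\cap G$. For each $i$, $g_i^{-1}\alpha$ has endpoints in $\Lambda(H)$, and $g_i^{-1}v\in N_R(g_i^{-1}\alpha)\cap N_R(G)$, so condition (\ref{RHull:geodesic}) gives $g_i^{-1}v\in\widetilde{Z}$. Setting $z_i:=[g_i^{-1}v]_Z$ places $(z_1,\ldots,z_n)$ in $S_n$ (distinctness of the $z_i$ uses distinctness of the cosets together with freeness), and this tuple lies in some component $C$. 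The axis segment $\alpha|_{[v,\xi v]}$ projects to a loop in $Y$ (since $v$ and $\xi v$ are in the same $G$-orbit); its $i$-th coordinate lift to $Z$ starting at $z_i$ is the projection of $g_i^{-1}\alpha|_{[g_i^{-1}v,\,g_i^{-1}\xi v]}$, which ends at $(g_i^{-1}\xi g_i)\cdot g_i^{-1}v$. Because $g_i^{-1}\xi g_i\in H$, this projects back to $z_i$, producing a loop in $C$ realizing $\tau_{C,i}=g_i^{-1}\xi g_i$, loxodromic.

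The main obstacle is verifying that the segment $g_i^{-1}\alpha|_{[g_i^{-1}v,\,g_i^{-1}\xi v]}$ really lies in $\widetilde{Z}$, since $\alpha$ may penetrate horoballs arbitrarily deeply. Portions within $N_R(G)$ are handled by condition (\ref{RHull:geodesic}). For a segment travelling deep inside a horoball $A'$, Proposition \ref{prop:MMP A.6} applied to $H$ forces $H\cap\Stab(A')$ to be infinite once the penetration exceeds the associated constant; then $\check\iota(X_H)$ contains vertices of arbitrary depth in $A'$, and condition (\ref{RHull:horoball}) supplies vertical rays of $\widetilde{Z}\cap A'$ above each such vertex. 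Using $\lambda$-quasiconvexity of $\check\iota(X_H)$ and the combinatorial-horoball structure of geodesics (vertical descent, short horizontal bridge, vertical ascent), the deep portion of the arc can be arranged to travel along vertical rays above vertices within $\lambda+O(\delta)$ of $\check\iota(X_H)$ at the cost of a bounded detour. Taking $R$ to exceed both $\lambda+O(\delta)$ and the Proposition \ref{prop:MMP A.6} constant makes the construction go through, and $R$ then depends only on $\delta$ and $\lambda$, as required.
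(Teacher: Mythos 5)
Your overall architecture (two inequalities, with the hard one being that relative multiplicity dominates relative height, proved by pushing an axis of the loxodromic into the hull and projecting to a loop in $S_n$) is the same as the paper's, and your easy direction is essentially the argument the paper borrows from \cite{VH}. However, the hard direction as you present it has two genuine gaps. First, you use ``the axis'' $\alpha$ of $\xi$ as a $\xi$--invariant geodesic: in the cusped space a loxodromic element need not have an invariant geodesic axis, and your loop construction (the segment $\alpha|_{[v,\xi v]}$ and its $g_i^{-1}$--translates, with $\xi v\in\alpha$) requires invariance. The paper supplies exactly this missing ingredient with Lemma \ref{l:quasiaxis}: a \emph{super-regular} quasigeodesic axis for a sufficiently high power of the loxodromic, invariant under that power and within $4\delta+3$ of any geodesic with the same endpoints. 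Passing to a power is harmless for relative height, and the super-regularity is not a cosmetic convenience but the normalization that makes the horoball step work.

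Second, and more seriously, your treatment of deep horoball excursions by a ``bounded detour'' along ``vertical rays above vertices within $\lambda+O(\delta)$ of $\check\iota(X_H)$'' does not match what an $R$--hull provides. Condition \eqref{RHull:horoball} of Definition \ref{def:RHull} only puts into $\widetilde{Z}$ the vertical rays through positive-depth vertices lying \emph{in} $\check\iota(X_H)$ (plus, via fullness, horizontal edges joining vertices of $\widetilde{Z}$), not rays through vertices merely close to the image. More decisively, a single path from $v$ to $\xi v$ must lie in \emph{every} translate $g_i\widetilde{Z}$ simultaneously in order to define one loop in $S_n$ whose coordinates realize conjugates of $\xi$ under all the $\tau_{C,i}$; a detour adapted to $\check\iota(X_H)$ carries no guarantee of lying in $g_i\widetilde{Z}$ for $i\geq 2$, whose certified rays in the same horoball pass through positive-depth vertices of $g_i\check\iota(X_H)$ — in general different vertical lines (two vertical lines can be adjacent at depth $m$ while their depth-$0$ vertices are of order $2^m$ apart in the coset). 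The paper avoids this coordination problem by never modifying the path: $\sigma$ is super-regular from the outset, the claim with constant $C$ shows points of $\sigma$ in the Cayley graph lie within $C$ of each $g_iH$, so the deep part of $\sigma$ is $2\delta$--close to a geodesic with endpoints in $g_iH$, and then quasiconvexity together with conditions \eqref{RHull:geodesic}, \eqref{RHull:horoball} and fullness certify the \emph{same} subsegment of $\sigma$ inside each $g_i\widetilde{Z}$. Relatedly, Proposition \ref{prop:MMP A.6} requires a geodesic with endpoints in $H$ as a subset of $G$, not endpoints in $\Lambda(H)$, so invoking it directly for the translated axis needs this same approximation step (or one should instead use quasiconvexity for geodesics with endpoints in the limit set).
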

\begin{definition}
  A geodesic $\sigma$ in a combinatorial horoball is \emph{regular} if it has at most three horizontal edges, and these are at the maximum depth for $\sigma$.  A path in a cusped space $X(G,\mc{P})$ is \emph{regular} if every intersection with a horoball is regular.

  A path $\sigma$ in a combinatorial horoball is \emph{super-regular} if it has at most $1$ horizontal edge, this edge is at maximum depth for $\sigma$, and $\sigma$ has minimal length among paths with this property.  A path in a cusped space $X(G,\mc{P})$ is super-regular if every intersection with a horoball is super-regular.
\end{definition}

\begin{lemma}\label{l:quasiaxis}
  Let $g$ be a loxodromic element of the relatively hyperbolic group pair $(G,\mc{P})$.  Then for any $D>0$, and any sufficiently large $n>0$, there is a bi-infinite quasigeodesic axis $\sigma$ for $g^n$ satisfying:
  \begin{enumerate}
  \item $\sigma$ is super-regular;
  \item $\sigma$ is contained in a $(4\delta + 3)$--neighborhood of any geodesic with the same endpoints.
  \item $d_X(p,g^np)>D$ for any point $p\in \sigma$.
  \end{enumerate}
\end{lemma}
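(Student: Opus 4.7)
The plan is to construct $\sigma$ as a bi-infinite concatenation of super-regular paths between translates of a basepoint under $\langle g^n\rangle$. Fix a vertex $p$ in the Cayley graph $G\subset X$. For each $n>0$, pick a geodesic $\gamma_n$ in $X$ from $p$ to $g^np$. Each maximal intersection of $\gamma_n$ with a horoball $A$ is a regular geodesic: a vertical descent to some depth $d$, at most three horizontal edges at depth $d$, and a vertical ascent. Replace each such horoball passage by a super-regular path that descends one or two additional levels (so that $2^{d'}\geq$ the relevant horizontal distance, making a single horizontal edge suffice), traverses one edge, and ascends. Elementary combinatorial-horoball geometry shows this modification moves points by at most $3$ and increases length by at most $3$ per horoball passage. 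Call the result $\gamma_n'$, a super-regular path from $p$ to $g^np$, and set $\sigma:=\bigcup_{k\in\mathbb{Z}}g^{kn}\gamma_n'$.

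Since both $p$ and $g^np$ lie at depth $0$, consecutive translates are joined at depth-$0$ vertices in the Cayley graph, so no horoball passage spans two fundamental domains and super-regularity is preserved across concatenations, giving property~(1). The path $\sigma$ is $g^n$-invariant by construction, and $g^n$ acts on $\sigma$ by shifting by one fundamental domain, so $\sigma$ is an axis. Property~(3) follows by choosing $n$ large: for loxodromic $g$ the stable translation length on $X$ is positive, so $d_X(p,g^np)\to\infty$; each vertex of $\sigma$ is shifted under $g^n$ by the length of one fundamental domain, which is at least $d_X(p,g^np)$, so for sufficiently large $n$ this displacement exceeds $D$.

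For property~(2), the argument proceeds via two triangle-inequality steps. First, the purely geodesic concatenation $\tau_n:=\bigcup_k g^{kn}\gamma_n$ is, for sufficiently large $n$, a bi-infinite quasigeodesic lying within Hausdorff distance $4\delta$ of any $X$-geodesic with the same endpoints: for large $n$, the segments $\gamma_n$ are long relative to $\delta$, so $\tau_n$ is a $k$-local geodesic with $k\gg\delta$, and a quantitative form of Lemma~\ref{l:local geodesic} (applied to finite sub-arcs and passed to a limit for infinite ones) yields the sharp constant. Second, the super-regular modification changes $\tau_n$ by at most $3$ in Hausdorff distance, by the per-horoball estimate above. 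Combining these bounds gives the required $(4\delta+3)$ estimate.

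The main obstacle is identifying the sharp Morse-lemma constant in the first step: one must verify that $4\delta$, rather than a worse multiple, suffices for a piecewise-geodesic whose segments are long relative to $\delta$. This can be handled either by invoking the Morse lemma with explicit constants derived from $\delta$-thinness of triangles, or by appealing directly to the fact that the orbit $\{g^{kn}p\}$ lies at bounded distance from any bi-infinite $X$-geodesic joining the two fixed points of $g$ on $\partial X$, and then controlling the fellow-travel of each $\gamma_n$ with this axis via thin-triangle arguments, combining these to bound the Hausdorff distance between $\tau_n$ and any finite-endpoint geodesic.
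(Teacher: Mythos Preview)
Your construction has a genuine gap in the choice of basepoint.  You fix an \emph{arbitrary} vertex $p$ in the Cayley graph, but property~(2) cannot hold with this choice: if $p$ lies at distance $L$ from a bi-infinite geodesic $\gamma$ joining $g^{\pm\infty}$, then $p\in\tau_n$ but $d_X(p,\gamma)=L$, so $\tau_n$ is not in a $4\delta$--neighborhood of $\gamma$ unless $L\le 4\delta$.  Worse, the concatenation $\tau_n$ is not a $k$--local geodesic for any $k$ independent of $L$: the geodesic $[p,g^np]$ ends by traveling ``up'' from near $\gamma$ to $g^np$, while $[g^np,g^{2n}p]$ begins by traveling back ``down'', so the Gromov product at the junction is roughly $L$, regardless of how large $n$ is.  Your appeal to Lemma~\ref{l:local geodesic} therefore fails, and no Morse-lemma argument can recover a bound depending only on $\delta$.

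The paper's proof fixes exactly this by first choosing the bi-infinite geodesic $\gamma$ joining $g^{\pm\infty}$ and then taking the basepoint $h\in\gamma$ in the Cayley graph (such a point exists since $\gamma$ is not contained in a single horoball).  Then every breakpoint $g^{kn}h$ lies on some translate $g^{kn}\gamma$, which is within Hausdorff distance $2\delta$ of $\gamma$; this places $\sigma_0$ inside $N_{2\delta}(\gamma)$, and after the super-regular modification one gets $\sigma\subset N_{2\delta+3}(\gamma)\subset N_{4\delta+3}(\gamma')$ for any other geodesic $\gamma'$ with the same endpoints.  Your argument is otherwise on the right track; the missing idea is precisely this placement of the basepoint on the axis.  (Your justification of property~(3) is also slightly off: the displacement $d_X(q,g^nq)$ of a point $q\in\sigma$ need not equal the path-length of a fundamental domain.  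The paper instead uses that the minimum displacement of $g^n$ over all of $X$ tends to infinity.)
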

\begin{proof}
  Let $g^{\pm \infty}$ be the two limit points in $\partial X$ of the cyclic group $\langle g\rangle$.  Since $X$ is proper, there is a bi-infinite geodesic $\gamma$ joining $g^{\pm \infty}$.  Note that $g^n\gamma$ and $\gamma$ are Hausdorff distance at most $2\delta$ from one another, for any $n$.  Fix $n$ large enough so that $d_X(x,g^n x)>\max\{D,100\delta\}$ for every point $x\in X$.

Since the endpoints of $\gamma$ are distinct, $\gamma$ is not contained in a single horoball.  
Choose some $h\in \gamma$ in the Cayley graph of $G$.    Choose a regular geodesic $\alpha_0$ joining $h$ to $g^n h$.  

Let $\sigma_0$ be the concatenation of the $g^n$--translates of $\alpha_0$; namely $\sigma_0 = \bigcup_{i\in \bZ}g^{in}\alpha_0$.  Let $\sigma$ be the path obtained my modifying $\sigma_0$ to be super-regular.  (This means first ensuring that paths within any horoball consist of two vertical segments and a single horizontal segment, and then removing the horizontal subsegments of $\sigma_0$ inside horoballs, and replacing them by minimal length super-regular paths with the same endpoints.)

The path $\sigma_0$ is a broken geodesic with each breakpoint on $g^k\gamma$ for some $\gamma$.  The individual geodesics have length at least $100\delta$, and the Gromov products at the vertices are at most $6\delta$.  In particular, $\sigma$ is a quasi-geodesic.  The local modifications producing $\sigma$ from $\sigma_0$ do not change the fact of quasi-geodesicity (though they do change the constants of quasi-geodesicity).

The path $\sigma_0$ lies a $2\delta$--neighborhood of $\gamma$.  The path $\sigma$ thus lies in a $(2\delta+3)$--neighborhood of $\gamma$, and in a $(4\delta+3)$--neighborhood of any other geodesic with the same endpoints.
\end{proof}

\begin{proof}[Proof of Theorem \ref{t:rh is rm}]
The proof from \cite{VH} works almost as written.  
Let $\lambda$ be the constant of quasiconvexity for $\check\iota(X_H)$, and let $C = 2(\lambda+2\delta)+\max_i\{d_X(1,c_D)\}$, where the elements $c_D$ are those elements chosen as in Section \ref{s:background}.
We suppose $R > C + \lambda + 6\delta + 4$.

We first show the more difficult direction, that relative multiplicity dominates relative height.  Suppose that the relative height is at least $n$, so there is some collection of cosets $\{H,g_2H,\ldots,g_nH\}$ and loxodromic elements $h_1,\ldots h_n\in H$ so that $h_1=g_2h_2g_2^{-1}=\cdots=g_nh_ng_n^{-1}$.  
Let $\sigma$ be the quasi-axis for $h_1$ given by Lemma \ref{l:quasiaxis}, and let $\gamma$ be any bi-infinite geodesic with the same endpoints at infinity as $\sigma$.  Requirement \eqref{RHull:geodesic} implies that $N_R(\gamma)\cap N_R(G)$ is contained in $J=\widetilde{Z}\cap g_2\widetilde{Z}\cap \cdots \cap g_n\widetilde{Z}$.  In particular, any points of $\sigma\cap N_{R-(4\delta+3)}(G)$ are contained in $J$.  We next need to show that the deeper points of $\sigma$ are also contained in $J$. 

Choose $g\in G$ on the quasi-axis $\sigma$.  Fix $1 \le i \le n$.  We claim that there is an element $\hat{h}_i$ in $H$ so that $d_X(g,g_i\hat{h}_i) \le C$ (taking $g_1 = 1$).  Indeed,  $g_ih_ig_i^{-1} = h_1$ leaves $\sigma$ invariant, so the endpoints of $\sigma$ lie in $g_i\Lambda_H$.  Suppose that $\rho$ is a bi-infinite geodesic with the same endpoints as $\sigma$.  Then by Lemma \ref{l:quasiaxis} $\sigma$ lies in a $(4\delta+3)$--neighborhood of $\rho$.  On the other hand, quasi-convexity implies that any point on $\rho$ lies within distance $\lambda + 2\delta$ of $\check\iota(X_H)$.  Possibly, the point $g$ lies within $\lambda + 2\delta$ of a point in $\check\iota(X_H)$ which lies within a horoball, but then this point has depth at most $\lambda + 2\delta$, and so lies within distance $\lambda+2\delta + \max\{ d_X(1,c_D) \}$ of a point in $H$.
The claim follows.
\begin{figure}[htbp]
  \centering
  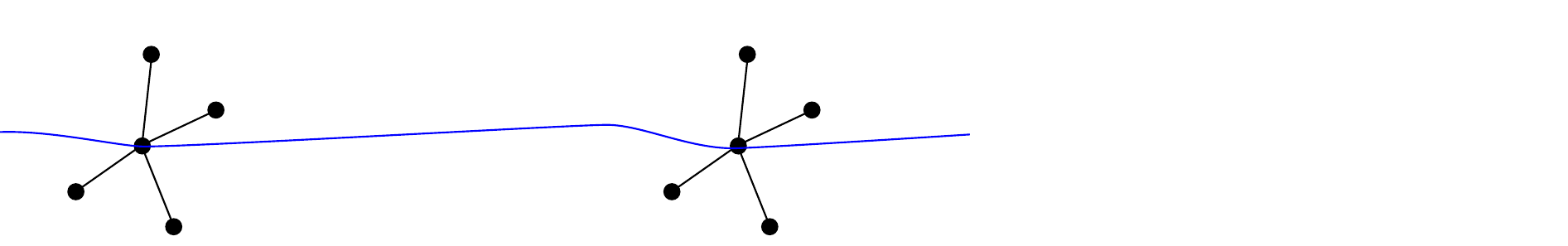
  \caption{Showing that the points of $\gamma$ lie in $J$.}
  \label{fig:quasiaxis}
\end{figure}

Now, let $A$ be a horoball $(R-4\delta+3)$--penetrated by $\sigma$, and note that $R-(4\delta+3) > C + \lambda +2\delta+ 1$.  Any point on $\sigma$ lies within $2\delta$ of some point of geodesics $[g,\hat{h}_1], [\hat{h}_1,h\hat{h}_1], [h\hat{h}_1,h_1g]$.  However, the first and third of these geodesics are between points in the Cayley graph and have length at most $C$.  Therefore, 
any points of $\sigma$ at depth greater than $R-(4\delta+3)$ in $A$ must be within $2\delta$ of the geodesic between $\hat{h}_1$ and $h \hat{h}_1$.  This implies that there is a geodesic with endpoints in $H$ which $(\lambda+1)$--penetrates $A$, which by $\lambda$-quasiconvexity implies that $A$ contains points at depth greater than $0$ in the image $\check\iota(X_H)$.  Condition \eqref{RHull:horoball} from Definition \ref{def:RHull} (along with the requirement that an $R$--hull be a full subgraph) ensures that 
 the intersection of $\widetilde{Z}$ with $A$ consists of a collection of vertical lines together with any horizontal edges connecting them.  In particular, the (super-regular) subsegment of $\sigma$ meeting $A$ is contained in $\widetilde{Z}$.  An exactly analogous argument shows that this subsegment is contained in $g_i\widetilde{Z}$ for each $i$, so all of $\sigma$ is contained in $J$.

This implies that $\sigma$ projects to a loop in $S_n$ of the type desired; if $C$ is the component containing the image of $\sigma$, then $\tau_{C,i}(\pi_1(C,p))$ contains a conjugate of the loxodromic $h_1$ for each $i$.

The other direction, that relative height dominates relatively multiplicity, is almost exactly the same as in \cite[Appendix A]{VH}.  The only difference is that we assume that the intersection is infinite and non-parabolic, so that it contains a loxodromic element.  This loxodromic element is then the one required by Definition \ref{d:rel mul}.
\end{proof}

\begin{corollary} \cite[Theorem 1.4]{hruskawise:packing}
The relative height of a relatively quasiconvex subgroup of a relatively hyperbolic group is finite.
\end{corollary}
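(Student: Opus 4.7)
The plan is to derive this corollary immediately from Theorem \ref{t:rh is rm} via a straightforward combinatorial count. Fix $R$ large enough that Theorem \ref{t:rh is rm} applies, and fix an $R$-hull $\widetilde{Z}$ for the action of $(H,\mc{D})$ on $X$; set $Z = H \backslash \widetilde{Z}$ and let $i\co Z \to Y$ be the induced map to $Y = G \backslash X$.

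By condition \eqref{RHull:WGH} of Definition \ref{def:RHull}, the action of $(H,\mc{D})$ on $\widetilde{Z}$ is weakly geometrically finite, so after removing suitable open horoball neighborhoods the remaining part of $Z$ is compact. Since $\widetilde{Z}$ is a subgraph of the locally finite cusped space $X$, this compact region contains only finitely many vertices; let $M$ denote their total number. In particular, $Z$ has at most $M$ vertices of depth $0$.

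Now suppose the relative multiplicity of $i$ equals $n$. By the definition recalled before Theorem \ref{t:rh is rm}, there is a component $C$ of $S_n$ containing a point $(z_1,\ldots,z_n)$ of depth $0$ whose coordinates are pairwise distinct vertices of $Z$ mapping to a common vertex of $Y$. Thus $z_1,\ldots,z_n$ are distinct depth-$0$ vertices of $Z$, and hence $n \le M$. Applying Theorem \ref{t:rh is rm}, the relative height of $H$ in $(G,\mc{P})$ is equal to this relative multiplicity, and so is bounded by $M$.

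The only conceptual input beyond Theorem \ref{t:rh is rm} is the finiteness of the depth-$0$ part of $Z$, which follows immediately from weak geometric finiteness combined with local finiteness of the cusped space. Consequently no serious obstacle arises in the corollary itself; the substantive work is all contained in Theorem \ref{t:rh is rm} and the $R$-hull machinery supporting it.
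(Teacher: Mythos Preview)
Your proof is correct and follows essentially the same approach as the paper's: both observe that a component $C$ of $S_n$ witnessing relative multiplicity $n$ contains a depth-$0$ point, which is an $n$--tuple of distinct depth-$0$ vertices of $Z$, and then bound $n$ by the (finite) number of such vertices. You supply a little more justification for why $Z$ has only finitely many depth-$0$ vertices (via the WGF condition and local finiteness), whereas the paper simply asserts this finiteness, but the argument is otherwise identical.
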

\begin{proof}
  If the relative multiplicity is $n$, then in particular, $S_n$ contains a loop with a vertex at depth $0$.  Since $S_n$ avoids the fat diagonal, this vertex represents an $n$--tuple of distinct depth $0$ vertices of $Z$.  There are only finitely many such vertices, so the relative height is bounded.
\end{proof}
\subsection{Non-increasing of height under wide fillings}

Suppose that $\widetilde{Z}$ is an $R$--hull for the action of $H$ on $X$.  The following is an analog of \cite[Lemma A.45]{VH}.

\begin{lemma}[cf. Lemma A.45, \cite{VH}] \label{l:embed}
For all sufficiently long and $H$--wide fillings $\phi \co G \to G(N_1,\ldots , N_m)$, if $K = \ker(\phi)$, $K_H = K \cap H$, and $k \in K \smallsetminus K_H$, then $k . \widetilde{Z} \cap \widetilde{Z} = \emptyset$.
\end{lemma}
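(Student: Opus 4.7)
I prove the contrapositive: if $k\in K$ and $z, kz\in \widetilde{Z}$, then $k\in K_H$. I arrange the filling to be long and wide enough that Propositions~\ref{p:pi(H) QC}, \ref{p:induced filling}, and \ref{p:weak sep} all hold, that Lemmas~\ref{l:4.1} and \ref{l:4.2} apply with appropriate constants $L_1, L_2$, and that the finite set $S$ from the wideness definition includes every element of $\bigcup\mc{P}$ of $X$-length below a threshold determined by $R$, $\delta$, the quasiconvexity constant $\lambda$ of $\check\iota(X_H)$, and $\max_{D\in \mc{D}} d_X(1,c_D)$. Two structural features of $\widetilde{Z}$ are essential: (i) any vertex of depth at most $R$ is within a uniformly bounded distance $M$ of an element of $H$, using the geodesic property of the $R$-hull together with $\lambda$-quasiconvexity of $\check\iota(X_H)$ and $\Lambda(H)\subset\Lambda(\check\iota(X_H))$; and (ii) any deep vertex of $\widetilde{Z}$ inside a horoball $A$ lies on a vertical ray, and $A$ is $H$-stabilized by some conjugate $sDs^{-1}$ with $D\in\mc{D},s\in H$, with horizontal positions (in the identification $A\cong P_D\times\bN$ via $sc_D$) lying in a bounded neighborhood of $D^{c_D^{-1}}\le P_D$.

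I argue by induction on $d_X(z, kz)$. For the base case $d_X(z, kz)\le L_0$, with $L_0<R$ chosen below, I split on the depth of $z$. If $z$ has depth at most $R$, then both $z$ and $kz$ have depth at most $R+L_0$, and by~(i) one finds $h_1, h_2\in H$ with $d_X(z, h_1), d_X(kz, h_2)\le M$; then $g:=h_2^{-1}kh_1$ satisfies $|g|_X\le 2M$ and $\pi(g)=\pi(h_2)^{-1}\pi(h_1)\in\pi(H)$, so applying Proposition~\ref{p:weak sep} to the finite list of elements of $G$ of $X$-length at most $2M$ lying outside $H$ forces $g\in H$, and hence $k\in H\cap K=K_H$. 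If instead $z$ has depth greater than $R$, then $kz$ must lie in the same horoball $A$ as $z$ (otherwise the geodesic from $z$ to $kz$ would need length at least $2R>L_0$ to cross depth~$0$); hence $k$ stabilizes $A$ and corresponds to translation by some $n\in N_{P_D}$ in horoball coordinates. By~(ii), $n$ lies in a bounded neighborhood of $D^{c_D^{-1}}$ in $P_D$, so we can factor $n=d'w$ with $d'\in D^{c_D^{-1}}$ and $w\in S\cap P_D$; the $H$-wideness hypothesis then forces $w\in D^{c_D^{-1}}$, hence $n\in D^{c_D^{-1}}$ and $k\in sDs^{-1}\le H$, so $k\in K_H$.

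For the inductive step $d_X(z, kz)>L_0$, I apply Lemma~\ref{l:4.1} to a geodesic $\gamma$ from $z$ to $kz$. Case~(1) would yield a nontrivial $10\delta$-local geodesic loop in $\overline{X}$ based at $\pi(z)=\pi(kz)$, contradicting Lemma~\ref{l:local geodesic} once $L_0$ exceeds the relevant quasigeodesic constants. Hence Case~(2) holds: $\gamma$ deeply penetrates some horoball $B$ with entry/exit $x,y\in G$, and there exists $k'\in K$ stabilizing $B$ with $d_X(x, k'y)\le 2L_1+3$. Because $z$ and $kz$ lie near $\check\iota(X_H)$ (by (i) or (ii)) and $\check\iota(X_H)$ is $\lambda$-quasiconvex, $\gamma$ stays within a bounded neighborhood of $\check\iota(X_H)$; choosing $L_2$ large enough guarantees that $\check\iota(X_H)$ also penetrates $B$ to the constant of Proposition~\ref{prop:MMP A.6}, forcing $B$ to be $H$-stabilized by an infinite peripheral subgroup. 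The argument of Lemma~\ref{l:4.2}, invoking $H$-wideness at the corresponding peripheral, then yields $k'\in K_H$. Since $k'\in H$ preserves $\widetilde{Z}$, we have $k'\cdot kz\in\widetilde{Z}$, and the shortening bound $d_X(x, k'y)\le 2L_1+3<2L_2\le d_X(x,y)$ gives $d_X(z, k'kz)<d_X(z, kz)$; by induction $k'k\in K_H$, and hence $k\in K_H$. The principal technical obstacle is confirming that the horoball $B$ arising in Case~(2) is $H$-stabilized so that the shortening element $k'$ lies in $K_H$ rather than merely in $K$; this hinges on the $R$-hull conditions forcing $z$ and $kz$ to stay near $\check\iota(X_H)$, together with the quasiconvexity of $\check\iota(X_H)$.
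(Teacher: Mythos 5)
Your overall architecture (induction on $d_X(z,kz)$, with Lemma \ref{l:4.1}/\ref{l:4.2}--style shortening and direct appeals to wideness inside horoballs) is genuinely different from the paper's proof, which is much shorter: it uses conditions \eqref{RHull:horoball} and \eqref{RHull:WGH} of Definition \ref{def:RHull} to show that the return set $\{g : g\widetilde{Z}\cap\widetilde{Z}\neq\emptyset\}$ is contained in $N_{R'}(H)\cdot N_{R'}(H)^{-1}$, hence in finitely many double cosets $Hg_iH$ with $g_0=1$, and then applies Proposition \ref{p:weak sep} to the finitely many $g_i\notin H$ together with a conjugation trick. However, your proposal has a genuine gap precisely where you assert the ``two structural features'' (i) and (ii). Conditions \eqref{RHull:basepoint}--\eqref{RHull:horoball} of the definition of an $R$--hull are containment requirements: they say what $\widetilde{Z}$ \emph{must include}, and therefore cannot bound $\widetilde{Z}$ from above. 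Your justification of (i) via ``the geodesic property of the $R$-hull together with $\lambda$-quasiconvexity'' shows only that the points forced into $\widetilde{Z}$ by condition \eqref{RHull:geodesic} are near $H$; it says nothing about an arbitrary vertex of $\widetilde{Z}$ at depth at most $R$. Likewise (ii) holds for the vertical rays forced in by condition \eqref{RHull:horoball}, but nothing you cite prevents an $R$--hull from containing deep vertices in other horoballs, or deep vertices whose horizontal coordinate is far from the relevant $D$--coset. The only hypothesis that limits the size of $\widetilde{Z}$ is the WGF condition \eqref{RHull:WGH}, which you never invoke; the lemma is quantified over arbitrary $R$--hulls (and is applied to arbitrary ones in Lemma \ref{lem:image is Rhull}), so (i) and (ii) are exactly the statements that need proof, and they are stronger than the control the paper actually extracts from \eqref{RHull:horoball}+\eqref{RHull:WGH}. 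Since your base case (finding $h_1,h_2\in H$ near $z,kz$) and your inductive step (identifying the penetrated horoball as one stabilized by a conjugate of some $D\in\mc{D}$, so that the shortening element $k'$ lands in $K_H$) both rest on (i)/(ii), the argument as written does not go through.

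A secondary problem: in the inductive step you apply Lemma \ref{l:4.1} to a geodesic $\gamma$ from $z$ to $kz$, but these endpoints may lie deep in a common horoball. If $\gamma$ never reaches depth $0$, alternative (2) of that lemma cannot hold as stated (it requires the entry and exit points $x,y$ to lie in $G$), and alternative (1) yields no contradiction because the asserted agreement with $\pi(\gamma)$ near the Cayley graph is vacuous and the local geodesic may degenerate. So the case of two deep points in one horoball with large parabolic displacement needs separate treatment; it can be handled by the same wideness computation as your deep base case, but again only after (ii) has actually been established. If you want to salvage your route, the efficient fix is to prove, from condition \eqref{RHull:WGH} (plus \eqref{RHull:horoball}), the single statement the paper uses --- that $g\widetilde{Z}\cap\widetilde{Z}\neq\emptyset$ forces $gN_{R'}(H)\cap N_{R'}(H)\neq\emptyset$ --- at which point the double-coset argument finishes the proof and the induction becomes unnecessary.
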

\begin{proof}
  By conditions \eqref{RHull:horoball} and \eqref{RHull:WGH} of an $R$--hull, there is some $R'$ so that if $g\widetilde{Z}\cap \widetilde{Z}\neq\emptyset$, and if $N$ is the $R'$--neighborhood of $H$ in the Cayley graph of $G$, then $g N \cap N\neq\emptyset$.  It follows that the set of $g$ for which $g\widetilde{Z}\cap \widetilde{Z}\neq\emptyset$ is contained in a finite union of double cosets
\[ A = \bigsqcup_{i=0}^l H g_i H, \mbox{ with } g_0 = 1. \]
  Now let $\phi$ be long and $H$--wide enough to apply Proposition \ref{p:weak sep} to all the elements $g_1,\ldots,g_l$.  For such a filling we have $\phi(g_i)\not\in\phi(H)$ for each $i$.  Equivalently, there is no $k\in K$ of the form $g_i h$ for $h\in H$ and $i>0$.

  Suppose by way of contradiction that $k\in (K\setminus K_H)\cap A$.  Then we can write $k = h_1 g_i h_2$ for some $i>0$ and some $h_1,h_2\in H$.  Conjugating we obtain a $k' \in (K\setminus K_H)\cap A$ which lies in $g_i H$.  But this contradicts the last paragraph.
\end{proof}
\begin{remark}
  Lemma A.45 in \cite{VH} is a special case of Lemma \ref{l:embed}.  The proof given in \cite{VH} contains the erroneous assertion that $A$ is a finite union of \emph{left} cosets; otherwise the proof given there is similar to our proof here of Lemma \ref{l:embed}, but using a theorem about $H$--fillings \cite[A.43]{VH} in place of our Proposition \ref{p:weak sep}.  
\end{remark}

Suppose $\pi\co (G,\mc{P})\to (\overline{G},\overline{\mc{P}})$ is a Dehn filling, and $X(G,\mc{P})$ is the combinatorial cusped space for $(G,\mc{P})$.  If $K$ is the kernel of the quotient map $G\to \overline{G}$, then the quotient $\overline{X} = \leftQ{X(G,\mc{P})}{K}$ is very nearly equal to the cusped space for the pair $(\overline{G},\overline{\mc{P}})$, differing only in the addition of some self-loops.  In particular, their $0$--skeleta are isometric, and we can safely ignore the difference.

Putting Lemma \ref{l:embed} together with uniformity of hyperbolicity and quasiconvexity after long Dehn fillings, we can prove the following:
\begin{lemma}\label{lem:image is Rhull}
  Fix $(G,\mc{P})$ relatively hyperbolic, and a relatively quasiconvex subgroup $H$.  
  For all $R$, there is an $R'$ satisfying the following:
  For all sufficiently long and $H$--wide fillings $\phi\co G \to G(N_1,\ldots , N_m)$, if $K = \ker(\phi)$, if $\tilde{Z}$ is an $R'$--hull for $H$, then $\widetilde{\overline{Z}}\subset \leftQ{X}{K}$ is an $R$--hull for the image of $H$ in $G(N_1,\ldots,N_m)$.
\end{lemma}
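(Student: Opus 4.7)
The plan is to check each of the four conditions in Definition \ref{def:RHull} for $\widetilde{\overline{Z}}$ acting on $\overline{X}=\leftQ{X}{K}$, with $\overline{H} = H/K_H$ acting via Proposition \ref{p:induced filling}. First note that for sufficiently long and $H$--wide fillings, Lemma \ref{l:embed} implies that $K$-translates of $\widetilde{Z}$ that meet $\widetilde{Z}$ must be $K_H$-translates; consequently the natural map $\widetilde{Z}/K_H\to \overline{X}$ is an injection of graphs, and we identify $\widetilde{\overline{Z}}$ with this image. Conditions \eqref{RHull:basepoint} and \eqref{RHull:horoball} of Definition \ref{def:RHull} are nearly immediate: the basepoint condition holds because $\tilde{*}\in \widetilde{Z}$ projects to the basepoint of $\overline{X}$, and the horoball condition holds because every horoball $\bar{A}$ in $\overline{X}$ meeting $\check\iota(X_{\overline{H}})$ in a vertex of positive depth lifts through $X\to \overline{X}$ to a horoball of $X$ meeting $\check\iota(X_H)$ at the corresponding positive depth (using compatibility of peripheral structures under the induced filling), so condition \eqref{RHull:horoball} for $\widetilde{Z}$ transfers to $\widetilde{\overline{Z}}$ after projection.

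For condition \eqref{RHull:WGH} (the WGF action of $\overline{H}$ on $\widetilde{\overline{Z}}$), the key point is that $\widetilde{\overline{Z}} = \widetilde{Z}/K_H$ is obtained by filling each peripheral subgroup of the WGF action of $H$ on $\widetilde{Z}$ by the corresponding induced filling kernel. Using Proposition \ref{p:induced filling} and Proposition \ref{p:pi(H) QC}, one sees that the resulting action is still weakly geometrically finite, with the cusp stabilizers in bijection with $\overline{\mc{D}}$; this is essentially the same observation as is made in \cite[\S A]{VH} in the full setting.

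The main obstacle is condition \eqref{RHull:geodesic}. The strategy is to choose $R'$ large in terms of $R$, $\delta$, the quasiconvexity constant $\lambda$ of $\check\iota(X_H)$, and $\max_D d_X(1,c_D)$, and then show that any point $\bar{x}\in N_R(\bar\gamma)\cap N_R(\overline{G})$, for a bi-infinite geodesic $\bar\gamma$ in $\overline{X}$ with endpoints in $\Lambda(\overline{H})$, has a lift $x\in X$ lying in $N_{R'}(\gamma)\cap N_{R'}(G)$ for some bi-infinite geodesic $\gamma$ in $X$ with endpoints in $\Lambda(H)$. To produce $\gamma$: approximate $\bar\gamma$ by a finite geodesic $\bar\sigma$ from $\bar{h}_1$ to $\bar{h}_2$ with $\bar{h}_i\in \overline{H}$ chosen far along $\bar\gamma$ on either side of $\bar{x}$, so that $\bar\sigma$ is $2\delta$-close to $\bar\gamma$ near $\bar{x}$. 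Pick shortest preimages $h_i\in H$ of $\bar{h}_i$; by Corollary \ref{c:shortest} (applied after translating to the identity) the geodesic $\sigma$ in $X$ from $h_1$ to $h_2$ projects via $\phi$ to a $10\delta$-local geodesic in $\overline{X}$ that agrees with $\phi(\sigma)$ in the $L$-neighborhood of the Cayley graph (for any chosen $L$), and by Lemma \ref{l:local geodesic} lies in a bounded Hausdorff neighborhood of $\bar\sigma$. Lifting $\bar{x}$ using this correspondence produces a preimage $x$ within a controlled distance of $\sigma$ and of $G$.

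Finally, since $\sigma$ has endpoints in $H$, relative quasiconvexity of $H$ places $x$ within $R+O(\delta,\lambda,\max d_X(1,c_D))$ of $\check\iota(X_H)$; and because $\bar{h}_1,\bar{h}_2$ were chosen far from $\bar{x}$, we can use loxodromic elements of $H$ (or a direct limit argument) to extend $\sigma$ near $x$ to a bi-infinite geodesic $\gamma\subset X$ with endpoints in $\Lambda(H)$ remaining within bounded Hausdorff distance of $\sigma$ in the relevant ball. Taking $R'$ at least $R$ plus these additive constants, we conclude $x\in N_{R'}(\gamma)\cap N_{R'}(G)\subset \widetilde{Z}$ by condition \eqref{RHull:geodesic} for the $R'$-hull $\widetilde{Z}$, hence $\bar{x}\in \widetilde{\overline{Z}}$, as required. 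The delicate estimate is balancing the parameters so that a single $R'$, depending only on $R$ and the fixed geometric data for $(G,\mc{P})$ and $H$, suffices for all sufficiently long and $H$--wide fillings; this uses the uniform hyperbolicity of Proposition \ref{p:agm 2.3} and the uniform quasiconvexity constant $\lambda'$ from Proposition \ref{p:pi(H) QC}.
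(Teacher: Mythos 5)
Your argument is essentially correct and follows the same skeleton as the paper (check the four hull conditions, with \eqref{RHull:basepoint}, \eqref{RHull:horoball}, \eqref{RHull:WGH} quick and \eqref{RHull:geodesic} the real work), but your treatment of condition \eqref{RHull:geodesic} is a genuinely different, and heavier, route. The paper never lifts the bi-infinite geodesic $\bar\gamma$ at all: given $p\in N_R(\bar\gamma)\cap N_R(\overline{G})$, it uses the uniform quasiconvexity constant $\lambda'$ of $\check\iota(X_{\overline{H}})$ from Proposition \ref{p:pi(H) QC} together with a single constant $R_0(\lambda',\delta)$ (geodesics with endpoints in the limit set of a $\lambda'$--quasiconvex set stay $R_0$--close to it) and the depth bound on $p$ to find $\overline{h}\in\overline{H}$ with $d_{\overline{X}}(\overline{h},p)\le 3R+2R_0+C$; it then applies the $R'$--hull condition upstairs to a bi-infinite geodesic with endpoints in $\Lambda(H)$ through a lift $h$, so that the $R'$--ball about $h$ lies in $\widetilde{Z}$ and its image contains $p$ (taking $R'>3R+2R_0+C$). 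Your approach instead re-derives this closeness by hand: approximating $\bar\gamma$ by geodesics between elements of $\overline{H}$, lifting via Corollary \ref{c:shortest} and Lemma \ref{l:local geodesic}, and then invoking quasiconvexity upstairs. This works, but it duplicates machinery already packaged into Proposition \ref{p:pi(H) QC} (whose proof is exactly Corollary \ref{c:shortest} plus Lemma \ref{l:local geodesic}), and it forces you to track the extra parameter $L$ and the wideness hypotheses a second time. Two soft spots to tighten if you keep your route: (i) Corollary \ref{c:shortest} requires the element $h_1^{-1}h_2$ to be shortest in $H\cap\phi^{-1}(\phi(h_1^{-1}h_2))$, which is a condition on the pair, not achieved by choosing each $h_i$ to be a shortest preimage of $\bar h_i$ separately (fix $h_1$ arbitrarily and minimize $d_X(h_1,h_2)$ over the fiber instead); (ii) the step ``extend $\sigma$ to a bi-infinite geodesic with endpoints in $\Lambda(H)$ staying close near $x$'' deserves a sentence: since $\bar\gamma$ exists, $\Lambda(\overline{H})$ and hence $\Lambda(H)$ has at least two points, and translating a fixed pair of distinct limit points by an element of $H$ near $x$ gives a geodesic with endpoints in $\Lambda(H)$ within a distance of $x$ bounded in terms of $H$ alone, which is all that is needed since $R'$ may depend on $H$; the ``far from $\bar h_1,\bar h_2$'' observation by itself does not produce this geodesic. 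The paper's argument quietly relies on the same fact, so this is a point to make explicit rather than a failure of your approach.
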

\begin{proof}
Let $\delta$ be such that $\overline{X}=\leftQ{X}{K}$ is $\delta$--hyperbolic whenever $K$ is the kernel of a sufficiently long filling (see Proposition \ref{p:agm 2.3}).  As discussed above this quotient is essentially equal to the combinatorial cusped space for the pair $(\overline{G},\overline{\mc{P}})$, where $\overline{G} = G(N_1,\ldots,N_m)$, and $\overline{\mc{P}}$ consists of the images of the elements of $\mc{P}$.
  Let $\lambda'$ be the constant from Proposition \ref{p:pi(H) QC}, so that $\phi(H)$ is $\lambda'$--relatively quasiconvex for a sufficiently long and $H$--wide filling.  Let $R_0 = R_0(\lambda',\delta)$ be such that any bi-infinite geodesic with endpoints in the limit set of a $\lambda'$--quasiconvex subset of a $\delta$--hyperbolic space is contained in the $R_0$--neighborhood of that quasiconvex subset.
Let $C = \max\{d_X(1,c_D)\}$, where $c_D$ ranges over the elements chosen in Section \ref{s:background}.
  Finally we fix some $R' > 3R + 2R_0 + C$.

  We assume that $\phi$ is sufficiently long and $H$--wide so that the results from the last paragraph apply.  

  We suppose that $\tilde{Z}$ is an $R'$--hull, and show that the image $\widetilde{\overline{Z}}\subset \leftQ{X}{K}$ is an $R$--hull for the image $\overline{H}$ of $H$ in $G(N_1,\ldots,N_m)$.

  Conditions \eqref{RHull:basepoint} and \eqref{RHull:horoball} of Definition \ref{def:RHull} follow easily from the fact that $\tilde{Z}$ is an $R'$--hull.  Condition \eqref{RHull:WGH} is a fairly straightforward consequence of the fact that $\overline{H}$ is relatively quasiconvex.

  We now establish Condition \eqref{RHull:geodesic}.  Suppose that $\gamma$ is a bi-infinite geodesic with endpoints in $\Lambda(\overline{H})$.  Let $p\in N_R(\gamma)\cap N_R(\overline{G})$.  Since $\check{\iota}(X_{\overline{H}})$ is $\lambda'$--quasiconvex, we have $d_{\overline{X}}(p,x)\leq R_0+R$ for some $x\in \check{\iota}(X_{\overline{H}})$.  Since the depth of $p$ is at most $R$, the depth of $x$ is at most $R_0 + 2R$.  Thus there is some $\overline{h}\in \overline{H}$ with $d_{\overline{X}}(x,\overline{h})\leq R_0 + 2R + C$.  Choose $h\in H$ projecting to $\overline{h}$, and note that there is a bi-infinite geodesic passing through $h$ with endpoints in $\Lambda(H)$.

  (We remark that because we assumed that $\gamma$ exists, $\Lambda(\overline{H})$ contains more than one point.  It follows that $\Lambda(H)$ contains more than one point, which implies the existence of such a bi-infinite geodesic.)

  In particular, an $R'$--ball about $h$ is contained in the $R'$--hull $\tilde{Z}$.  Since $R'> d_{\overline{X}}(\overline{h},p)$, the image of $\tilde{Z}$ in $\leftQ{X}{K}$ must contain $p$.
\end{proof}

We now prove that the relative height of $H$ does not increase under sufficiently long and $H$--wide fillings.

\begin{theorem}(cf. \cite[A.46]{VH}) \label{t:rh decrease}
For sufficiently long and $H$--wide fillings, the relative height of $(\overline{H},\overline{\mc{D}})$ in $(\overline{G},\overline{\mc{P}})$ is at most the relative height of $(H,\mc{D})$ in $(G,\mc{P})$.
\end{theorem}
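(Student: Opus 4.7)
The plan is to pass through the notion of relative multiplicity via Theorem \ref{t:rh is rm}, identify the multiplicity-computing objects for $H$ and $\overline H$ using the tools assembled earlier in this section, and then observe that loxodromic elements lift through any Dehn filling. To set up, fix a uniform hyperbolicity constant $\delta$ for all sufficiently long fillings (Proposition \ref{p:agm 2.3}) and a uniform quasiconvexity constant $\lambda'$ for $\overline H$ in $\overline G$ (Proposition \ref{p:pi(H) QC}). Choose $R$ large enough for Theorem \ref{t:rh is rm} to apply simultaneously to $(H,G)$ and to $(\overline H, \overline G)$, and let $R'$ be the constant produced from $R$ by Lemma \ref{lem:image is Rhull}. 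Pick an $R'$-hull $\tilde Z$ for $H$ acting on $X$, for instance a restricted $D$-neighborhood of $\check\iota(X_H)$ for sufficiently large $D$. Restrict to fillings long and $H$-wide enough that (i) the image $\widetilde{\overline Z}$ of $\tilde Z$ in $\overline X$ is an $R$-hull for $\overline H$ (Lemma \ref{lem:image is Rhull}), and (ii) $k \tilde Z \cap \tilde Z = \emptyset$ for every $k \in K \setminus K_H$ (Lemma \ref{l:embed}).

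Condition (ii) forces $\widetilde{\overline Z} = \tilde Z / K_H$, and so provides canonical identifications $\overline Z := \widetilde{\overline Z}/\overline H = \tilde Z/H = Z$ and $\overline Y := \overline X/\overline G = X/G = Y$ under which the map $\overline i : \overline Z \to \overline Y$ coincides with $i : Z \to Y$. In particular $\overline S_n = S_n$ for every $n$, and the two collections of components containing a depth-zero vertex agree. Fixing a single maximal tree and depth-zero basepoint $p$, the maps $\tau_{\overline C, j} : \pi_1(\overline C, p) \to \overline H$ factor as $\phi \circ \tau_{C, j}$, where $\phi : G \to \overline G$ is the filling quotient. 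Now suppose the relative height of $\overline H$ in $\overline G$ equals $n$; Theorem \ref{t:rh is rm} applied in $\overline G$ yields a component $\overline C$ of $\overline S_n$ with a depth-zero vertex such that, for each $j$, the group $\tau_{\overline C, j}(\pi_1(\overline C, p))$ contains a loxodromic element $\overline h_j$. Pick $\alpha_j \in \pi_1(\overline C, p) = \pi_1(C, p)$ with $\tau_{\overline C, j}(\alpha_j) = \overline h_j$, and set $h_j := \tau_{C, j}(\alpha_j) \in H$.

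Each $h_j$ is loxodromic in $G$: it has infinite order because $\phi(h_j) = \overline h_j$ does, and it is non-parabolic because any conjugate of an element of $\mc{P}$ maps under $\phi$ into a conjugate of the corresponding element of $\overline{\mc{P}}$, so a parabolic $h_j$ would force $\overline h_j$ to be parabolic. Hence $C = \overline C$ witnesses relative multiplicity at least $n$ for $i : Z \to Y$, and Theorem \ref{t:rh is rm} applied in $G$ bounds the relative height of $H$ below by $n$, which is the desired inequality. I do not anticipate a genuine obstacle: the substantive structural work has already been carried out in Section \ref{s:Properties} and earlier in this section, and what remains is the formal observation that loxodromicity lifts through a filling. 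The one point requiring care is arranging a single choice of $R'$-hull and a single sufficiently long $H$-wide filling that meets all the hypotheses of Lemmas \ref{l:embed} and \ref{lem:image is Rhull} together with Theorem \ref{t:rh is rm} applied on both sides.
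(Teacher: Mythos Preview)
Your proposal is correct and follows essentially the same route as the paper: reduce to relative multiplicity via Theorem \ref{t:rh is rm}, use Lemmas \ref{l:embed} and \ref{lem:image is Rhull} to identify $Z$ with $\overline Z$ and $Y$ with $\overline Y$ (hence $S_n$ with $\overline S_n$), observe that $\overline\tau_{C,j} = \phi|_H\circ\tau_{C,j}$, and lift. Your treatment of the final step is in fact a bit more careful than the paper's, which only records that \emph{infinite order} lifts; your observation that parabolics map to parabolics, so a loxodromic image forces a loxodromic preimage, is exactly the missing sentence needed to match Definition \ref{d:rel mul}.
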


\begin{proof}
  As usual, let $\delta$ be a constant so that the cusped space of $(G,\mc{P})$ and also those of sufficiently long fillings, are $\delta$--hyperbolic, and let $\lambda$ be a quasi-convexity constant for $H$, which we also assume (using Proposition \ref{p:pi(H) QC}) is a quasi-convexity constant for the image of $H$ under sufficiently long and $H$--wide fillings.

Let $R$ be sufficiently large to apply Theorem \ref{t:rh is rm} with these values of $\delta$ and $\lambda$.  Let $R'$ the the constant (depending on $R$) from the conclusion of Lemma \ref{lem:image is Rhull}.
  
Consider the following commutative diagram, which is equivariant with respect to the group actions and the natural maps between the groups (inclusion and quotient maps, as appropriate): 

\cd{
\widetilde{Z} \ar@(ul,u)[]^H\ar[r] \ar[d] & X \ar@(u,ur)[]^G\ar[d] \\
\widetilde{\overline{Z}}\ar[r]\ar@(l,ul)[]^{H/K_H} & \overline{X}\ar@(ur,r)^{G/K},
}
where $X$ is the cusped space for $(G,\mc{P})$, $\overline{X} = \leftQ{X}{K}$, $\widetilde{Z}$ is an $R$--hull for $H$, $K_H = H \cap K$ is the kernel of the induced filling on $H$, and $\widetilde{\overline{Z}} = \leftQ{\widetilde{Z}}{K_H}$.  

It follows immediately from Lemma \ref{l:embed} that $\widetilde{\overline{Z}}$ embeds in $\overline{X}$, and it follows from Lemma \ref{lem:image is Rhull} that $\widetilde{\overline{Z}}$ is an $R$--hull for $H/K_H$ in $\overline{X}$.  

Taking quotients by the relevant groups we get the diagram,
\cdlabel{ZvbarZ}{
Z \ar[d] \ar[r]^i & Y \ar[d]\\
\overline{Z} \ar[r]^{\overline\imath} & \overline{Y}}
where the horizontal maps
are immersions inducing the inclusions $H\to G$ and $\overline{H}\to
\overline{G}$ on the level of fundamental group.  The vertical maps from $Y$ to $\overline{Y}$ and from $Z$ to $\overline{Z}$ are homeomorphisms.  Theorem \ref{t:rh is rm} implies that the relative multiplicities of $Z$ in $Y$ and of $\overline{Z}$ in $\overline{Y}$ measure the relative heights of $H$ in $G$ and of $H/K_H$ in $G/K$, respectively.

For $n > 0$, define
\[  S_n = \left\{(z_1,\ldots,z_n)\in Z^n\mid
i(z_1)=\cdots=i(z_n)\right\}\setminus\Delta \]
and
\[   \overline{S}_n = \left\{(z_1,\ldots,z_n)\in \overline{Z}^n\mid
\overline\imath(z_1)=\cdots=\overline\imath(z_n)\right\}\setminus\overline\Delta,
\]  
where $\Delta = \left\{ (z_1, \ldots , z_n) \mid \mbox{ there exist $i \ne j$ so that } z_i = z_j \right\}$ is the fat diagonal in $Z^n$ and $\overline\Delta$ is the fat diagonal in $\overline{Z}^n$.

For each $i \in \{ 1 ,\ldots , n\}$
and each component $C$ of $S_n$ the projections of $Z^n$
to its factors induce maps
\[	\tau_{C,i} \co \pi_1(C) \to H	,	\]
and
\[	\overline\tau_{C,i} \co \pi_1(C) \to \overline{H}	.	\]
Since the quotient $Z = \leftQ{\widetilde{Z}}{H}$ can also be thought of as
$\leftQ{\widetilde{\overline{Z}}}{(H/K_H)}$,
the homomorphisms
$\overline\tau_{C,i}$ all factor as $\overline\tau_{C,i} = \phi|_H \circ \tau_{C,i}$, where $\phi$ is the filling map.

In particular, if $\gamma$ is a loop
in $\overline{S}_n$ so that $\overline\tau_{C,i}\left( [\gamma ] \right)$ is infinite for each $i \in \{ 1 , \ldots , n \}$ 
then it must be that $\tau_{C,i} \left( [\gamma ] \right)$ is already infinite for each $i$. 
\end{proof}

In case parabolic subgroups of a relatively hyperbolic group are finite, the relative height is the same as the height.  Recall that a filling $G \to G(N_1,\ldots , N_m)$ is \emph{peripherally finite} if for each $1 \le j \le m$ the subgroup $N_j$ has finite-index in $P_j$.  The following is an immediate consequence of Theorem \ref{t:rh decrease}.

\begin{corollary} \label{c:pf rh vs h}
For sufficiently long and $H$--wide peripherally finite fillings, the height of $\overline{H}$ in $\overline{G}$ is at most the relative height of $(H,\mc{D})$ in $(G,\mc{P})$.
\end{corollary}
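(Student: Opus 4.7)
The plan is to combine Theorem \ref{t:rh decrease} with the observation that peripheral finiteness forces every parabolic subgroup of the filled group to be finite, which collapses the distinction between height and relative height in $\overline{G}$.

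First, I would unpack what a peripherally finite filling gives us: if $N_j$ has finite index in $P_j$ for each $j$, then each $\overline{P}_j = P_j/N_j$ is a finite group. Consequently, every parabolic subgroup of $(\overline{G},\overline{\mc{P}})$ is finite, being contained in a conjugate of some $\overline{P}_j$. In particular, no infinite subgroup of $\overline{G}$ is parabolic.

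Next, I would compare the two notions for $\overline{H} \le \overline{G}$. By definition, the (ordinary) height of $\overline{H}$ is the largest $n$ for which there exist distinct cosets $\overline{g}_1\overline{H}, \ldots, \overline{g}_n\overline{H}$ with infinite intersection $\bigcap \overline{g}_i \overline{H} \overline{g}_i^{-1}$, while the relative height imposes the additional requirement that this intersection be non-parabolic. By the previous paragraph, any infinite subgroup of $\overline{G}$ is automatically non-parabolic, so the two definitions coincide for $\overline{H}$ in $\overline{G}$.

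Finally, Theorem \ref{t:rh decrease} (applied for sufficiently long and $H$--wide fillings, a class that certainly includes the peripherally finite ones satisfying the hypotheses of that theorem) gives that the relative height of $(\overline{H},\overline{\mc{D}})$ in $(\overline{G},\overline{\mc{P}})$ is at most the relative height of $(H,\mc{D})$ in $(G,\mc{P})$. Combining this with the equality from the previous step yields the claim. There is essentially no obstacle here: the entire content of the corollary is the (trivial) reduction of height to relative height in a group whose parabolics are finite, and the work has already been done in Theorem \ref{t:rh decrease}.
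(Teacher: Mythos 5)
Your proposal is correct and follows exactly the paper's reasoning: the paper notes that when parabolic subgroups are finite, relative height coincides with height, and then states the corollary as an immediate consequence of Theorem \ref{t:rh decrease}. You have simply spelled out this same reduction in more detail.
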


\subsection{A result required by Wilton and Zalesskii}

\begin{definition} \cite[Definition 4.1]{WiltonZalesskii2}
Suppose that $(G,\mc{P})$ is relatively hyperbolic and $H \le G$.  We say that $H$ is {\em relatively malnormal} if for any $g \not\in H$ the intersection $H^g \cap H$ is conjugate into some element of $\mc{P}$.
\end{definition}
If $(G,\mc{P})$ is relatively hyperbolic and $G$ is torsion-free, then a relatively malnormal subgroup is either parabolic or a subgroup of relative height $1$.

For his joint work with Zalesskii \cite{WiltonZalesskii2}, Henry Wilton asked us to prove the following result, which appeared as \cite[Theorem 4.4]{WiltonZalesskii2}:

\begin{theorem} \label{t:Henry}
Let $G$ be a toral relatively hyperbolic group with parabolic subgroups $\{ P_1, \ldots , P_n \}$ and let $H$ be a subgroup which is relatively quasi-convex and relatively malnormal.  There exist subgroups of finite index $K_i' \subset P_i$ (for all $i$) such that, for all subgroups of finite index $L_i \subset K_i'$, if
\[	\eta \co G \to Q = G / \llangle L_1 ,\ldots , L_n \rrangle	\]
is the quotient map, the quotient $Q$ is word-hyperbolic and the image $\eta(H)$ in $Q$ is quasi-convex and almost malnormal.
\end{theorem}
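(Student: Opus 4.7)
The plan is to choose the finite-index subgroups $K_i' \le P_i$ so that every peripherally finite filling with kernels $L_i \le K_i'$ is simultaneously sufficiently long and $H$--wide in the sense needed by Proposition \ref{p:pi(H) QC} and Theorem \ref{t:rh decrease}. Since $G$ is toral, each $P_i$ is finitely generated free abelian, hence every subgroup of $P_i$ is separable. Applying Lemma \ref{l:PF wide fillings} to the single relatively quasiconvex subgroup $H$, with $S\subseteq \bigcup\mc{P}\smallsetminus\{1\}$ a finite set chosen large enough so that avoiding $S$ realizes the ``sufficiently long'' hypotheses of all results we invoke, yields finite-index normal subgroups of each $P_i$. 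We take $K_i'$ to be the intersection of these subgroups with any further finite-index subgroups needed to avoid the finite ``long'' sets appearing in the hypotheses of Propositions \ref{p:agm 2.3}, \ref{p:pi(H) QC}, and Theorem \ref{t:rh decrease}; these further subgroups exist by separability in the abelian $P_i$.

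Now fix any $L_i\le K_i'$ of finite index in $P_i$, and let $\eta\co G\to Q$ be the corresponding filling. Each image $\overline{P}_i = P_i/L_i$ is finite, so $(Q,\overline{\mc{P}})$ is relatively hyperbolic with finite peripheral subgroups, which forces $Q$ to be word-hyperbolic. By Proposition \ref{p:pi(H) QC}, $\eta(H)$ is relatively quasiconvex in $(Q,\overline{\mc{P}})$, and because the peripheral subgroups of $Q$ are finite, relative quasiconvexity coincides with ordinary quasiconvexity in $Q$.

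For almost malnormality, observe that relative malnormality of $H$ in $(G,\mc{P})$ says that for every $g\notin H$ the intersection $H^g\cap H$ is parabolic in $(G,\mc{P})$. In particular, the relative height of $H$ in $(G,\mc{P})$ is at most $1$. By Theorem \ref{t:rh decrease}, the relative height of $\eta(H)$ in $(Q,\overline{\mc{P}})$ is also at most $1$, meaning that whenever $q\in Q\smallsetminus \eta(H)$ the intersection $\eta(H)\cap \eta(H)^q$ is either finite or parabolic in $Q$. Since parabolic subgroups of $Q$ are finite, this intersection is in fact finite for every such $q$, which is exactly the assertion that $\eta(H)$ is almost malnormal.

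The only delicate point is the simultaneous satisfaction of ``sufficiently long,'' ``$H$--wide,'' and ``peripherally finite'' by a single family $K_i'$. This is what forces us to work in the toral setting: separability of every subgroup of each (abelian) $P_i$ is exactly what allows us to realize Lemma \ref{l:PF wide fillings} while still retaining enough room inside $P_i$ to pick $L_i$ of finite index avoiding any prescribed finite set. Once that is arranged, hyperbolicity of $Q$, quasiconvexity of $\eta(H)$, and almost malnormality of $\eta(H)$ all follow formally from the results assembled in the body of the paper.
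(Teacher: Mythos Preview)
Your proof is correct and follows essentially the same approach as the paper's: use separability in the abelian peripherals (via Lemma \ref{l:PF wide fillings}) to arrange that all fillings below the $K_i'$ are simultaneously long and $H$--wide, then invoke Proposition \ref{p:pi(H) QC} for quasiconvexity and the relative-height bound for almost malnormality. The only cosmetic difference is that the paper cites Corollary \ref{c:pf rh vs h} (the peripherally finite specialization) rather than Theorem \ref{t:rh decrease} directly, and appeals to \cite[Theorem 7.3.(2)]{rhds} for hyperbolicity of $Q$; your reductions via finite peripherals accomplish the same thing.
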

\begin{proof}
We restrict to peripherally finite fillings.  For sufficiently long peripherally finite fillings
$\eta \co G \to Q$, the quotient $Q$ is hyperbolic (since it is hyperbolic relative to finite groups) by \cite[Theorem 7.3.(2)]{rhds}.  Since the peripheral subgroups of $Q$ are finite, there is no difference between quasiconvex and relatively quasiconvex subgroups.

By Proposition \ref{p:pi(H) QC}, for sufficiently long and $H$--wide fillings $\eta \co G \to Q$ the image 
$\eta(H)$ is quasi-convex, and by Corollary \ref{c:pf rh vs h}, for sufficiently long and $H$--wide fillings $\eta(H)$ is almost malnormal in $Q$.

It remains only to note that sufficiently long and $H$--wide peripherally finite fillings exist by Lemma \ref{l:PF wide fillings}, since the peripheral subgroups of $G$ are free abelian, and hence ERF.
\end{proof}

\small
\bibliographystyle{abbrv}

\end{document}